
\documentclass[12pt, A4paper]{amsart}
\parindent=1cm
\usepackage{amsmath,amssymb,amscd,amsthm, euscript}
\usepackage[matrix,arrow,curve]{xy}
\hoffset -2cm \voffset -1cm \textwidth=16cm \textheight=23cm

\sloppy

\newtheorem{theorem}{Theorem}[section]
\newtheorem{predl}[theorem]{Proposition}
\newtheorem{lemma}[theorem]{Lemma}
\newtheorem{corollary}[theorem]{Corollary}

\theoremstyle{definition}
\newtheorem{definition}[theorem]{Definition}

\theoremstyle{remark}
\newtheorem{example}[theorem]{Example}
\newtheorem{remark}[theorem]{Remark}

\newcommand{\quot }{/\!\!/}

\newcommand{\N}{\mathbb N}

\newcommand{\G}{\mathbb G}
\newcommand{\A}{\mathbb A}

\newcommand{\BB}{\mathcal B}
\newcommand{\CC}{\mathcal C}

\newcommand{\D}{\mathcal D}

\newcommand{\FF}{\mathcal F}

\newcommand{\LL}{\mathcal L}

\newcommand{\TT}{\mathsf T}
\renewcommand{\O}{\mathcal O}

\renewcommand{\k}{\mathsf k}
\newcommand{\Mod}{\mathrm{{-}Mod}}
\newcommand{\mmod}{\mathrm{{-}mod}}

\newcommand{\coh}{\mathrm{coh}}
\newcommand{\qcoh}{\mathrm{qcoh}}
\newcommand{\perf}{\mathrm{perf}}
\newcommand{\Id}{\mathrm{Id}}
\newcommand{\Kern}{\mathrm{Kern}}
\newcommand{\Augm}{\mathrm{Sk}_{\ge 0}}

\newcommand{\ra}{\mathbin{\rightarrow}}

\newcommand{\xra}{\xrightarrow}

\renewcommand{\ge}{\geqslant}
\renewcommand{\~}{\tilde }

\newcommand{\bul}{\bullet}

\DeclareMathOperator{\Hom}{\textup{Hom}}

\DeclareMathOperator{\Ext}{\textup{Ext}}

 \DeclareMathOperator{\Pic}{\mathrm{Pic}}

\DeclareMathOperator{\EEnd}{\mathcal{E}\mathrm{nd}}
 \DeclareMathOperator{\Spec}{\mathrm{Spec}}
\DeclareMathOperator{\har}{\mathrm{char}} \DeclareMathOperator{\Ob}{\mathrm{Ob}}
\DeclareMathOperator{\Ker}{\mathrm{ker}} \DeclareMathOperator{\Repr}{\mathrm{Rep}}
\DeclareMathOperator{\repr}{\mathrm{rep}} 

\def\a{\alpha}

\newcommand{\e}{\varepsilon}
\newcommand{\s}{\sigma}


\begin{document}

\author{Alexey ELAGIN}
\address{Institute for Information Transmission Problems RAS (Kharkevich Institute), Moscow, and Laboratory of Algebraic Geometry, SU-HSE, Moscow}
\email{alexelagin@rambler.ru}
\title[Cohomological descent theory]{Cohomological descent theory for a morphism of stacks and for equivariant derived categories}
\thanks{
This work was partially supported by
RFBR grants {(projects 09-01-00242-a, 10-01-93110-NCNIL-a and 10-01-93113-NCNIL-a)}, grants of the President of Russia MD-2712.2009.1 and NSh-4713.2010.1,
"Dynasty" foundation, and AG Laboratory HSE (RF government
grant, ag. 11.G34.31.0023). The author is grateful
to Science Foundation of the SU-HSE for supporting project 10-09-0015.}
\date{}
\maketitle

\begin{abstract}
In the paper we answer the following question: for a morphism of varieties (or, more generally, stacks), when the derived category of the base can be recovered from the derived category of the covering variety by means of descent theory?
As a corollary, we show that for an action of a reductive group on a scheme, the derived category of equivariant sheaves is equivalent to the category of objects, equipped with an action of the group, in the ordinary derived category. \end{abstract}

\section{Introduction}

It is known that sheaves on a variety can be defined locally.
That is, let $S=\bigcup U_i$ be an open covering. To give a sheaf on $S$
is the same as to give a family of sheaves $F_i$ on $U_i$ and a family of isomorphisms $\phi_{ij}\colon F_i|_{U_i\cap U_j}\ra F_j|_{U_i\cap U_j}$ satisfying cocycle conditions: on intersections $U_i\cap U_j\cap U_k$ one has
$\phi_{ik}=\phi_{jk}\circ \phi_{ij}$. Sheaves can be also defined using more general coverings. Let $p\colon X\ra S$  be a covering (for example, a covering of topological spaces or a flat finite morphism of schemes), let $p_i$ and $p_{ij}$ be
the projections of the fibred products $X\times_SX$ and $X\times_SX\times_SX$ onto factors. Then giving a sheaf on the base $S$ is equivalent to giving
a sheaf $F$ on $X$ together with gluing isomorphism
$\theta \colon p_1^*F\ra p_2^*F$ on $X\times_SX$ satisfying the following cocycle condition: isomorphisms $p_{13}^*\theta$ and $p_{23}^*\theta\circ p_{12}^*\theta$ on  $X\times_SX\times_SX$ are equal.

We remark that the first statement above concerning an open covering follows from
the second one for $X=\bigsqcup U_i$.

The natural question is: would similar facts hold for sheaves replaced by arbitrary objects of derived category of coherent sheaves on an algebraic variety?

There is an important special version of this question. Suppose that $G$ is an algebraic group acting on an algebraic variety $X$. By definition, an equivariant sheaf on $X$ is a sheaf $F$ equipped with an action of the group (for finite $G$ an action is given by isomorphisms $\theta_g\colon F\ra g^*F$ for any $g\in G$, which are compatible in the following sense:
$g^*\theta_h\circ \theta_g=\theta_{hg}$ for any pair $g,h\in G$).
For a given action, equivariant quasi-coherent sheaves form an abelian category $\qcoh^G(X)$. Suppose $\FF^{\bul}$ is an object of its derived category
$\D^G(X)=\D(\qcoh^G(X))$, then $G$ acts on the complex $F^{\bul}\in \D(X)$
which is $\FF^{\bul}$ with forgotten group action. For finite $G$
an action of $G$ on $F^{\bul}$ is given in a similar way: as a compatible family of isomorphisms
$\theta_g\colon F^{\bul}\ra g^*F^{\bul}$ in the category $\D(X)$. Is the converse true: given a complex $F^{\bul}$ and an action of an algebraic group on $F^{\bul}$,
do they
define an object in the derived category of equivariant coherent sheaves?

In this paper we answer both above questions. Note that the second question is essentially a special case of the first one if one considers the covering $X\to X\quot G$ where $X$ is a variety and $X\quot G$ is the quotient stack of $X$ by the action of
the group $G$. Hence it is reasonable to work in the category of stacks, not schemes.

To be more precise, our goal is to determine when two categories are equivalent.
These categories are: the derived category of sheaves on the base $S$ and a certain descent category $\D(X)/p$
associated with the covering $X\to S$.
The standard way to define the descent category was described above. An object of the descent category is an object  $F$ in $\D(X)$ equipped with a gluing isomorphism $p_1^*F\ra p_2^*F$
on $X\times_SX$ satisfying the cocycle condition.
We give the criterion in Section~\ref{s7}:

\smallskip
{\bf Theorem~\ref{th_descentforstacks}.} For a flat morphism of stacks $p\colon X\ra S$
the unbounded derived category $\D(S)$ is equivalent to the descent category $\D(X)/p$ associated with $p$ if and only if the natural morphism
$\O_S\ra Rp_*\O_X$ is an embedding of a direct summand.

\smallskip
For comparison of equivariant and non-equivariant derived categories we get a corollary:

\smallskip
{\bf Theorem~\ref{th_descentforequiv}.} For an action of a linearly reductive group $G$
(i.e., a group with semisimple category of representations) on a scheme $X$
the derived category of equivariant sheaves $\D^G(X)$ is equivalent to the descent category $\D(X)^G$, formed by objects of $\D(X)$ equipped with action of~$G$.

\smallskip
Often it is convenient to define descent data in the different way, the one coming from monad theory.
Comonad descent category is the category of comodules over a certain comonad on the category $\D(X)$. This comonad is the one associated with the adjoint pair of functors $p^*$ and $p_*$ between unbounded derived categories
$\D(X)$ and $\D(S)$. We prove that these two descent categories are equivalent if the morphism $p\colon X\to S$ is flat (Proposition~\ref{prop_twotypes}).
Thus the language of comonads can be used. With its help we prove
Theorem~\ref{th_descentforstacks}, basing on classical Beck theorem.

The paper is organized as follows.
Results of sections 2-6 are mostly well-known. In Section 2 facts concerning cosimplicial categories and related descent categories are collected. In Section 3 we present, following Barr-Wells~\cite{TTT}, comonad theory: definitions, Comparison Theorem and criteria of comparison functor being an equivalence.
New material here is the special case of this criterion for triangulated categories. In Section 4 we prove that the classical way of giving descent data is equivalent to the one related with comodules over comonad.
In Section 5 we collect facts about subcategories in descent categories, which are
needed for dealing with bounded derived categories and categories of perfect complexes.
In Section 6 we present basics on derived categories of sheaves on stacks.
Section 7 is the central one in the paper. In it, basing on results of Section 3, we find out when
the derived category of the base can be recovered from the derived category of the covering stack as a descent category.
In Section 8 we introduce and study SCDT property  for a morphism of stacks. We show that finite flat morphisms and smooth projective morphisms in zero characteristic are SCDT.
In Section 9 we apply general results on descent for stacks in the special case
of equivariant derived categories.


Author thanks D.\,Orlov for constant attention to the work and inspiring discussions and A.\,Kuznetsov for his valuable remarks, corrections, support, and for careful reading of this text.
Also author is grateful to Kyoto University for the invitation to a winter school on algebraic geometry in January 2009, where the main part of this paper was written, and for their hospitality.

\section{Cosimplicial constructions}\label{section_cosimplicial}

Let $\Delta_0$ be the category, whose objects are sets of the form  $[1,\ldots,n], n\in \N$, and an empty set, and whose morphisms are non-decreasing maps between them. Let $\Delta\subset\Delta_0$ be its full subcategory formed by non-empty sets.

By definition, a cosimplicial object of a category $\CC$ (for example, a cosimplicial set, a cosimplicial scheme) is a functor from $\Delta$ to~$\CC$. Taking the 2-category of categories $\mathrm{Cats}$ for $\CC$, we obtain a definition of a cosimplicial category.

\begin{definition}
A cosimplicial category is a covariant 2-functor from $\Delta$  to 2-category $\mathrm{Cats}$ of categories and functors; an augmented cosimplicial category is a covariant 2-functor $\Delta_0 \ra \mathrm{Cats}$. More explicitly, a cosimplicial
category $\CC_{\bul}$ (resp. augmented cosimplicial category $\CC_{\bul}$)
consists of the following data:
\begin{enumerate}
\item a set of categories $\CC_k$, $k=0,1,2\ldots$ (resp. $k=-1,0,1,2\ldots$) indexed by objects of~$\Delta$ (resp. $\Delta_0$; here $\CC_k$ is the category associated with the set $[1,\dots, k+1]\in\Delta$, $\CC_{-1}$ corresponds to $\emptyset$);
\item a set of functors $P_f^*\colon\CC_m\ra \CC_n$ indexed by morphisms of
$\Delta$ (of $\Delta_0$), i.e. by nondecreasing maps
$f\colon [1,\ldots,m+1]\ra [1,\ldots,n+1]$;
\item a set of functor isomorphisms $\epsilon_{f,g}\colon P_f^*P_g^*\ra P_{fg}^*$ indexed by composable pairs $f,g$ of maps.
\end{enumerate}
Isomorphisms in 3) should obey the following cocycle condition: diagram
$$\xymatrix{P_f^*P_g^*P_h^* \ar[r]^{\epsilon_{f,g}}\ar[d]^{\epsilon_{g,h}} & P_{fg}^*P_h^*\ar[d]^{\epsilon_{fg,h}}\\ P_f^*P_{gh}^* \ar[r]^{\epsilon_{f,gh}} & P_{fgh}^*}$$
is commutative for any composable triple $f,g,h$ of maps.

Simplicial and augmented simplicial category are defined as contravariant 2-functors $\Delta \ra \mathrm{Cats}$ and $\Delta_0 \ra \mathrm{Cats}$.
\end{definition}

For a given augmented cosimplicial category $\CC_{\bul}=[\CC_{-1},\CC_0,\CC_1,\ldots,P_f^*]$, an \emph{augmentation} is
formed by the category $\CC_{-1}$ and functors  $P_f^*$ that are defined on $\CC_{-1}$.
Removing the augmentation from $\CC_{\bul}$ we obtain the cosimplicial category $[\CC_0,\CC_1,\ldots,P_f^*]$, which is denoted by $\Augm(\CC_{\bul})$.

Using terminology of~\cite[19.1]{KSh} one can say that a cosimplicial category is a prestack on $\Delta$.

\begin{example}
\label{example_standard}
Let $X\ra S$ be a morphism of schemes. Then schemes $S,X,X\times_S X,X\times_S X\times_S X,\ldots$ and morphisms
$$p_f\colon \underbrace{X\times_SX\times\ldots\times X}_{n}\ra \underbrace{X\times_SX\times\ldots\times X}_{m}$$ between them given by the rule
$$p_f(x_1,\ldots,x_n)=(x_{f(1)},\ldots,x_{f(m)})$$
for $f\in \Hom_{\Delta_0}([1,\ldots,m],[1,\ldots,n])$
form an augmented simplicial scheme. Categories of sheaves on these schemes and pull-back functors between these categories give an important example of an augmented cosimplicial category.
\end{example}

It is convenient to think about categories $\CC_{-1},\CC_0, \CC_1,\ldots$ as about categories of sheaves on $S,X,X\times_S X,X\times_S X\times_S X,\ldots$.
The notation for functors $P_f^*$ we are using is a reminiscence of
pullback functors on categories of sheaves.
For instance, we will denote a functor $P_{f}^*\colon \CC_1\ra
\CC_2$, where $f\colon [1,2]\ra [1,2,3]$ is a map such that $f(1)=1$, $f(2)=3$, by
$P_{13}^*$. It may be thought of as the pull-back functor for a projection
$p_{13}\colon X\times_SX\times_SX\ra
X\times_SX$. We denote a functor $P_f^*\colon\CC_{-1}\ra\CC_0$ corresponding to the only map $\emptyset\ra [1]$ by $P^*$, it plays the role of the pull-back functor for the morphism $p\colon X\ra S$. We denote by $D^*$ the functor $P_f^*\colon \CC_1\ra \CC_0$ for the only map $f\colon [1,2]\ra [1]$. This functor may be thought of as induced by a diagonal embedding
$d\colon X\ra X \times_SX$.

For any cosimplicial category $\CC_{\bul}=[\CC_{0}, \CC_1,
\ldots, p_{\bul}^*]$ there is a well-defined descent category denoted by $\Kern(\CC_{\bul})$,
see.~\cite[19.3]{KSh}.

\begin{definition}[{classical descent category}]
\label{def_dxbar} Objects of $\Kern(\CC_{\bul})$ are pairs
$(F,\theta)$ where  $F\in \Ob  \CC_0$ and $\theta$ is an isomorphism $P_1^*F\ra
P_2^*F$ satisfying the following cocycle condition: the diagram
$$\xymatrix{&P_{13}^*P_1^*F \ar[r]^{P_{13}^*\theta} \ar@{-}[ld]_{\sim} &
P_{13}^*P_2^*F \ar@{-}[rd]^{\sim}&\\
P_{12}^*P_1^*F \ar[rd]_{P_{12}^*\theta} &&& P_{23}^*P_2^*F. \\
&P_{12}^*P_2^*F \ar@{-}[r]_{\sim} & P_{23}^*P_1^*F \ar[ru]_{P_{23}^*\theta}}$$
is commutative. Functor isomorphisms from the definition of a cosimplicial
category are denoted here by lines with the ``$\sim$'' sign.
A morphism in $\Kern(\CC_{\bul})$ from $(F_1,\theta_1)$ to
$(F_2,\theta_2)$ is a morphism $f\in \Hom_{\CC_0}(F_1,F_2)$ such that
$P_2^*f\circ \theta_1=\theta_2\circ P_1^*f$.
\end{definition}
\begin{remark}
Skipping canonical isomorphisms, the cocycle condition is usually written as
$$P_{23}^*\theta\circ P_{12}^*\theta=P_{13}^*\theta.$$
\end{remark}
Category $\Kern(\CC_{\bul})$ can also be defined in the other way.
\begin{definition}
\label{def_dxbar1} An object of $\Kern(\CC_{\bul})$ consists of a family of objects
$F_i\in \CC_i$, $i=0,1,2,\ldots$ and of a  family of isomorphisms
$\phi_f\colon P_f^*F_m\xra{\sim} F_n$ for all maps $f\colon [1,\ldots,m{+}1]\ra [1,\ldots,n+1]$ in $\Delta$ that satisfy equalities
$\phi_{gf}=\phi_{g}\circ P_{g}^*\phi_{f}$ for all composable pairs $(f,g)$.
A morphism in $\Kern(\CC_{\bul})$ from $(F_{\bul},\phi_{\bul})$ to
$(F'_{\bul},\phi'_{\bul})$ is a family of morphisms $\rho_i\colon
F_i\ra F'_i$ compatible with $\phi_{\bul}$ and $\phi_{\bul}'$.
\end{definition}

The following fact is well-known.
\begin{predl}
\label{prop_kernkern}
Definitions ~\ref{def_dxbar} and~\ref{def_dxbar1} are equivalent.
\end{predl}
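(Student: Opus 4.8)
The plan is to construct functors in both directions between the two versions of the descent category and show they are mutually quasi-inverse — in fact, one recovers the other on the nose up to canonical identification, so it suffices to exhibit a bijection of objects (and morphisms) that is natural. Start from an object $(F,\theta)$ in the sense of Definition~\ref{def_dxbar}. First I would set $F_0 := F$ and, for each $n\ge 0$, choose the "canonical" map $c_n\colon [1]\to[1,\ldots,n+1]$ sending $1\mapsto 1$, and put $F_n := P_{c_n}^*F$. More generally, for a map $f\colon [1,\ldots,m+1]\to[1,\ldots,n+1]$ one must produce an isomorphism $\phi_f\colon P_f^*F_m \to F_n$; since $f\circ c_m$ and $c_n$ both target $[1,\ldots,n+1]$ from $[1]$, and any two maps $[1]\to[1,\ldots,n+1]$ are related by the "edge" structure, $\phi_f$ is forced to be a composite of $\epsilon$'s and of pullbacks of $\theta$ (the single generator), namely the image under the appropriate $P^*$ of $\theta$ read as the unique edge map. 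Concretely, writing $u,v\colon[1]\to[1,2]$ for the two inclusions with $\theta\colon P_u^*F\to P_v^*F$, every $\phi_f$ is manufactured from $P_g^*\theta$ for suitable $g$ together with the coherence isomorphisms $\epsilon$. The cocycle identity $\phi_{gf}=\phi_g\circ P_g^*\phi_f$ then follows from the cocycle condition on $\theta$ in Definition~\ref{def_dxbar} together with the coherence (pentagon-type) condition on the $\epsilon_{f,g}$ in the definition of a cosimplicial category.

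Conversely, given a family $(F_\bullet,\phi_\bullet)$ as in Definition~\ref{def_dxbar1}, I would simply set $F := F_0$ and let $\theta\colon P_1^*F\to P_2^*F$ be $\phi_v^{-1}\circ\phi_u$ (appropriately composed with $\epsilon$'s so that both sides land in $F_1$), where $u,v\colon[1]\to[1,2]$ are the two coface maps; so $P_1^*F = P_u^*F_0$ and $P_2^*F = P_v^*F_0$, and $\theta$ is the comparison of the two trivializations of $F_1$. The cocycle condition for this $\theta$ on $\CC_2$ is extracted from the three instances of $\phi_{gf}=\phi_g\circ P_g^*\phi_f$ corresponding to the three edges of the 2-simplex, again modulo the coherence of $\epsilon$. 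One then checks that these two assignments are inverse to each other: starting from $(F,\theta)$, passing to $(F_\bullet,\phi_\bullet)$ and back returns $\theta$ because $\phi$ on the edge maps was defined to be (pullbacks of) $\theta$; starting from $(F_\bullet,\phi_\bullet)$, the reconstructed family has $n$-th term $P_{c_n}^*F_0$, which is canonically identified with $F_n$ via $\phi_{c_n}$, and these identifications are compatible with all the $\phi_f$ precisely by the cocycle relations. The same recipe on morphisms (a morphism $f$ with $P_2^*f\circ\theta_1=\theta_2\circ P_1^*f$ corresponds to the family $\rho_i := P_{c_i}^*f$) gives the bijection on Hom-sets, and functoriality is immediate.

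I expect the main obstacle to be purely bookkeeping: keeping track of the canonical isomorphisms $\epsilon_{f,g}$ that Definition~\ref{def_dxbar} suppresses by drawing "$\sim$" lines. The content of the proposition is not any deep fact but the verification that the web of $\epsilon$'s is coherent enough that (a) the formula for $\phi_f$ in terms of $\theta$ is well-defined (independent of how one factors $f$ through the coface/codegeneracy generators of $\Delta$), and (b) the cocycle condition on $\theta$ is exactly equivalent to the full system of identities $\phi_{gf}=\phi_g\circ P_g^*\phi_f$. Point (a) is where I would be most careful: I would reduce to generators of $\Delta$ (cofaces $\delta^i$ and codegeneracies $\sigma^j$), define $\phi$ on generators, and invoke the coherence diagram in the definition of cosimplicial category to see that the resulting $\phi_f$ depends only on $f$. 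Since the statement is flagged as well-known, I would keep this at the level of indicating the construction and the two coherence inputs rather than writing out all simplicial identities.
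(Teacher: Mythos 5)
Your proposal is correct and follows essentially the same route as the paper: both directions are constructed exactly as in the paper's sketch, with $F_n := P_{c_n}^*F$ for the initial-vertex map $c_n\colon[1]\to[1,\ldots,n+1]$, with $\theta$ recovered as $\phi_{i_2}^{-1}\phi_{i_1}$ from the two coface maps, and with $\phi_f$ built from a single pullback of $\theta$ along an auxiliary map $g\colon[1,2]\to[1,\ldots,n+1]$ recording $f(1)$, conjugated by the coherence isomorphisms $\epsilon$. The only cosmetic difference is that the paper sidesteps your well-definedness concern in point (a) by writing one closed formula for $\phi_f$ rather than defining it on cofaces and codegeneracies and extending.
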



\begin{proof}
We give a sketch of the proof for the convenience of the reader.

Consider an object of the category from Definition~\ref{def_dxbar1}, which consists of two families $F_{\bul}=(F_i)$ and $\phi_{\bul}=(\phi_f)$. Construct an object of the category from Definition~\ref{def_dxbar}:
let $F=F_0$ and  define $\theta\colon P_1^*F\ra P_2^*F$ to be the composition
$$\phi_{i_2}^{-1}\phi_{i_1}\colon  P_{i_1}^*F_0\xra{\phi_{i_1}} F_1 \xra{\phi_{i_2}^{-1}} P_{i_2}^*F_0.$$
Here $i_1$ and $i_2$ denote two morphisms $[1]\ra [1,2]$  in $\Delta$,
$i_1(1)=1, i_2(1)=2$. The compatibility condition for $\phi_f$ implies the cocycle condition for $\theta$.

Conversely, consider an object $(F,\theta)$ of the category from
Definition~\ref{def_dxbar}.  Let us construct an object of the category from Definition~\ref{def_dxbar}. Let
$f_i\colon [1]\ra [1,\ldots, i+1]$ be the map sending $1$ to $1$.
Take $F_i=P_{f_i}^*F$. For a map $f\colon [1,\ldots, m+1]\ra [1,\ldots,n+1]$ define an isomorphism $\phi_f$ as follows. Suppose $f(1)=r$, consider a map $g\colon [1,2]\ra[1,\ldots,n+1]$ such that $g(1)=1, g(2)=r$.
Take $\phi_f$ equal to the composition of isomorphisms
\begin{multline*}
P_f^*F_m=P_f^*P_{f_m}^*F\xra{\epsilon_{f,f_m}} P_{ff_m}^*F=P_{gi_2}^*F \xra{\epsilon_{g,i_2}^{-1}} \\
\ra P_g^*P_{i_2}^*F \xra{P_g^*\theta^{-1}} P_g^*P_{i_1}^*F
\xra{\epsilon_{g,i_1}} P_{gi_1}^*F = P_{f_n}^*F=F_n.
\end{multline*}
It is easy to see that the cocycle condition for $\theta$ implies that $\phi_f$ are compatible.
\end{proof}

For a category $\Kern(\CC_{\bul})$ constructed from a cosimplicial category
$\CC_{\bul}$, there are forgetful functors
$\Kern(\CC_{\bul})\ra \CC_k$ defined in an obvious way.
It turns out that these functors extend $\CC_{\bul}$ into an augmented cosimplicial category.
Put $\CC_{-1}=\Kern(\CC_{\bul})$. For a morphism $f\colon\emptyset\ra [0,\ldots,n]$, define the functor
$P_f^*\colon \CC_{-1}\ra \CC_n$
as forgetting: $(F_{\bul},\phi_{\bul})\mapsto F_n$ on objects, $\rho_{\bul}\mapsto \rho_n$ on morphism
(we use Definition~\ref{def_dxbar1} here).
\begin{predl}
\label{prop_comparisonsimpl}
\begin{enumerate}
\item Categories $\CC_{-1}=\Kern(\CC_{\bul}), \CC_0,\CC_1,\ldots$ and functors $P_f^*$ between them form an augmented cosimplicial category $\~{\CC}_{\bul}$.
\item
The category $\~{\CC}_{\bul}$ has the following universal property: for any extension of $\CC_{\bul}$ to an  augmented cosimplicial category
$\~{\CC}'_{\bul}=[\CC'_{-1},\CC_0,\CC_1,\ldots,P_f'^*]$ there exists unique (up to an isomorphism) functor $\Phi\colon \CC'_{-1}\ra \Kern(\CC_{\bul})=\CC_{-1}$
that gives, together with a family of identity functors, a functor
$\~\CC'_{\bul}\ra \~{\CC}_{\bul}$ between augmented cosimplicial categories.
\end{enumerate}
\end{predl}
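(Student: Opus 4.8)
The plan is to handle the two parts separately, working throughout with the description of $\Kern(\CC_{\bul})$ from Definition~\ref{def_dxbar1}, which is the one adapted to the forgetful functors; the whole argument is of the same flavour as the proof of Proposition~\ref{prop_kernkern}, i.e.\ coherence bookkeeping.

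\emph{Part (1).} First I would record that in $\Delta_0$ there is exactly one morphism with source $\emptyset$, namely $u_n\colon\emptyset\ra[1,\dots,n+1]$ for each $n$, and that $g\circ u_m=u_n$ for every $g\colon[1,\dots,m+1]\ra[1,\dots,n+1]$. Hence, beyond the forgetful functors $P_{u_n}^*\colon\CC_{-1}\ra\CC_n$, $(F_{\bul},\phi_{\bul})\mapsto F_n$ already described in the text preceding the statement, the only data to supply are the functor isomorphisms $\epsilon_{g,u_m}\colon P_g^*P_{u_m}^*\ra P_{u_n}^*$; the component of such an $\epsilon_{g,u_m}$ at $(F_{\bul},\phi_{\bul})$ is a map $P_g^*F_m\ra F_n$, and I take it to be $\phi_g$. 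Its naturality in $(F_{\bul},\phi_{\bul})$ is exactly the requirement that a morphism of $\Kern(\CC_{\bul})$ commute with the $\phi$'s. It then remains to verify the cocycle square for the enlarged family. Since a nondegenerate morphism with source $\emptyset$ can appear in a composable triple only in the rightmost slot, such a triple has the form $f,g,u_l$ with $g\colon[1,\dots,l+1]\ra[1,\dots,m+1]$ and $f\colon[1,\dots,m+1]\ra[1,\dots,n+1]$. Evaluating the cocycle square at $(F_{\bul},\phi_{\bul})$, its two composites become $\phi_{fg}\circ(\epsilon_{f,g})_{F_l}$ and $\phi_f\circ P_f^*(\phi_g)$, so commutativity of the square is precisely the relation $\phi_{fg}=\phi_f\circ P_f^*\phi_g$ built into Definition~\ref{def_dxbar1} (with the canonical isomorphism $\epsilon_{f,g}$ written out). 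Triples containing an identity are automatic.

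\emph{Part (2).} For the universal property, given an extension $\~{\CC}'_{\bul}=[\CC'_{-1},\CC_0,\CC_1,\dots,P_f'^*]$, I would define $\Phi\colon\CC'_{-1}\ra\Kern(\CC_{\bul})$ on an object $A$ by $F_i:=P_{u_i}'^*(A)\in\CC_i$ and $\phi_f:=(\epsilon'_{f,u_m})_A\colon P_f^*F_m\xra{\sim}F_n$ for $f\colon[1,\dots,m+1]\ra[1,\dots,n+1]$. This is legitimate because $\~{\CC}'_{\bul}$ restricts to $\CC_{\bul}$ in nonnegative degrees, so there $P_f'^*=P_f^*$ and the source of $(\epsilon'_{f,u_m})_A$ is $P_f'^*P_{u_m}'^*(A)=P_f^*(F_m)$. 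The cocycle condition for $\epsilon'$ on triples of the form $f,g,u_l$ gives exactly the compatibilities among the $\phi_f$, so $\Phi(A)=(F_{\bul},\phi_{\bul})\in\Kern(\CC_{\bul})$; on a morphism $a\colon A\ra A'$ I set $\Phi(a)=(P_{u_i}'^*(a))_i$, which is a morphism of $\Kern(\CC_{\bul})$ by naturality of $\epsilon'$. By construction the forgetful functor $P_{u_n}^*\colon\Kern(\CC_{\bul})\ra\CC_n$ from part~(1) satisfies $P_{u_n}^*\circ\Phi=P_{u_n}'^*$ on the nose, and a direct check (using $\epsilon_{g,u_m}=\phi_g$) shows that $\Phi$, together with the identity functors in nonnegative degrees, respects all the $\epsilon$'s, i.e.\ is a morphism of augmented cosimplicial categories.

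Finally, for uniqueness, suppose $\Phi'$ is another such functor, with structure isomorphisms $\eta_{\bul}$. For each $n$ the isomorphism $\eta_{u_n}$ identifies $P_{u_n}^*\circ\Phi'$ with $P_{u_n}'^*$, hence identifies the $n$-th forgetful component of $\Phi'(A)$ with $F_n$; the compatibility of $\eta_{\bul}$ with $\epsilon'$ and with the isomorphisms $\epsilon_{g,u_m}$ of part~(1) then forces, under these identifications, the gluing data of $\Phi'(A)$ to agree with that of $\Phi(A)$, so $\Phi'\cong\Phi$. I do not anticipate a genuine obstacle anywhere: the content is entirely $2$-categorical coherence, and the only points needing care are tracking the compatibilities between the various $\epsilon$'s and remembering that $\Phi$ is determined only up to isomorphism, since the objects $F_i=P_{u_i}'^*(A)$ representing its forgetful images are themselves only determined up to canonical isomorphism.
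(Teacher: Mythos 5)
Your proposal is correct and follows essentially the same route as the paper's proof: in part (1) the new structure isomorphisms $\epsilon_{g,u_m}$ are given componentwise by $\phi_g$ and the cocycle square reduces to the compatibility $\phi_{fg}=\phi_f\circ P_f^*\phi_g$ of Definition~\ref{def_dxbar1}, and in part (2) the functor $\Phi$ is defined by $F_k=P_{u_k}'^*(A)$ with $\phi_f=\epsilon'_{f,u_m}$. Your uniqueness argument merely spells out what the paper dismisses with ``clearly''.
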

\begin{remark}
Part 2 of the above proposition is an analogue of comparison theorem~\ref{th_comparison} for comodules over a comonad, see below.
\end{remark}

\begin{proof}
1. Given a pair of maps $f\colon \emptyset\ra [1,\ldots,m+1]$, $g\colon [1,\ldots,m+1]\ra [1,\ldots,n+1]$ define the functor isomorphism
$\epsilon_{g,f}\colon P_g^*P_f^*\xra{\sim}P_{gf}^*$ as follows: for an object
$(F_{\bul},\phi_{\bul})\in\Kern(\CC_{\bul})$ put $$P_g^*P_f^*((F_{\bul},\phi_{\bul}))=P_g^*F_m\xra{\phi_g} F_n=
P_{gf}^*((F_{\bul},\phi_{\bul})).$$
The compatibility condition for $\phi$ implies the cocycle condition for these ``new'' $\epsilon$.

2. Suppose $H$ is an object of $\CC'_{-1}$. Define the functor $\Phi$ on $H$ as the pair $(F_{\bul},\phi_{\bul})$, where
$$F_k=P_{f_k}'^*H\in\~\CC'_k=\CC_k$$
and $f_k\colon \emptyset\ra [1,\ldots,k+1]$ is the only map. Define $\phi_f$
for $f\colon [1,\ldots,m+1]\ra [1,\ldots,n+1]$ by
$$P_f^*F_m=P_f^*P_{f_m}'^*H=P_f'^*P_{f_m}'^*H\xra{\epsilon_{f,f_m}}
P_{ff_m}'^*H=P_{f_n}'^*H=F_n.$$
For a morphism $u\colon H_1\ra H_2$ take $\Phi$ to be $\rho_{\bul}$,
where $\rho_k\colon P_{f_k}'^*H_1\ra P_{f_k}'^*H_2$ is $P_{f_k}'^*u$.
Clearly, the functor $\Phi$ has all necessary properties.
\end{proof}

Mostly, we are interested in augmented cosimplicial categories
$$[\CC_{-1},\CC_{0}, \CC_1, \CC_2,\ldots, P_{\bul}^*]$$
satisfying two additional assumptions A1 and A2.
\begin{equation*}
\text{Functors $P_f^*\colon \CC_m\ra \CC_n$ have right adjoint functors
$P_{f*}\colon\CC_n\ra\CC_m$.} \leqno{(A1)}
\end{equation*}

One can check that functors
$P_{\bul *}$ form augmented simplicial category
$$[\CC_{-1}, \CC_{0}, \CC_1, \ldots, P_{\bul *}].$$
These functors may be thought of as push-forward functors on categories of sheaves.

To state the second assumption, consider a commutative square in the category $\Delta_0$ and the corresponding square of categories and functors:
$$\xymatrix{[1,\ldots,m+n-r+1]&[1,\ldots,n+1] \ar[l]_-{f'}\\
[1,\ldots,m+1] \ar[u]_{g'} & [1,\ldots,r+1], \ar[l]_{f}
\ar[u]_g}\qquad\qquad
\xymatrix{{\CC_{m+n-r}}\ar@<-0.5mm>[rr]_{P_{f'*}}\ar@<-0.5mm>[d]_{P_{g'*}}&&{\CC_n} \ar@<-0.5mm>[ll]_{P_{f'}^*}\ar@<-0.5mm>[d]_{P_{g*}}\\
{\CC_m} \ar@<-0.5mm>[u]_{P_{g'}^*} \ar@<-0.5mm>[rr]_{P_{f*}}&&
{\CC_r}\ar@<-0.5mm>[ll]_{P_f^*} \ar@<-0.5mm>[u]_{P_g^*}.}$$
If maps $f$ and $f'$  (or $g$ and $g'$) are injective and
$[1,\ldots,m+n-r+1]=\mathrm{Im}\ f'\cup \mathrm{Im}\ g'$, then we say that the square is \emph{exact Cartesian}. For arbitrary square two following natural base change morphisms are defined:
\begin{equation}
\label{equation_bc}
P_g^*P_{f*}\ra P_{f'*}P_{g'}^*\quad\text{and}\quad P_f^*P_{g*}\ra P_{g'*}P_{f'}^*.
\end{equation}
For example, a morphism $P_g^*P_{f*}\ra P_{f'*}P_{g'}^*$ can be defined as a composition
$$P_g^*P_{f*}\xra{\eta} P_g^*P_{f*}P_{g'*}P_{g'}^*\xra{\sim}
P_g^*P_{g*}P_{f'*}P_{g'}^*\xra{\e}  P_{f'*}P_{g'}^*$$
or as a composition
$$P_g^*P_{f*}\xra{\eta} P_{f'*}P_{f'}^*P_g^*P_{f*}\xra{\sim}
P_{f'*}P_{g'}^*P_f^*P_{f*}\xra{\e}  P_{f'*}P_{g'}^*,$$ where  $\eta$ and
$\e$ denote canonical adjunction morphisms.
It is  easy to check that these two ways are equivalent.

The second assumption is an axiomatization of the flat base change formula.
\begin{equation*}
\text{Base change morphisms~\eqref{equation_bc} are isomorphisms for any exact Cartesian square.}
\leqno{(A2)}
\end{equation*}

\begin{predl}
\label{prop_augmcosimplcat}
Let $\CC_{\bul}$ be a cosimplicial category, let $\~{\CC}_{\bul}$ be
the augmented cosimplicial category obtained from $\CC_{\bul}$ by adding
$\Kern(\CC_{\bul})$. Suppose $\CC_{\bul}$ satisfies assumptions A1 and A2, then
$\~{\CC}_{\bul}$ also does.
\end{predl}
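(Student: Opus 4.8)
The plan is to reduce everything to checking the two axioms A1 and A2 for the one new type of functor that appears in $\~{\CC}_{\bul}$, namely the forgetful functors $P_f^*\colon \Kern(\CC_{\bul})\ra\CC_n$ attached to the maps $\emptyset\ra[1,\ldots,n+1]$, together with the new functor isomorphisms $\epsilon_{g,f}$ constructed in the proof of Proposition~\ref{prop_comparisonsimpl}. All squares in $\Delta_0$ not involving $\emptyset$ already live inside $\CC_{\bul}$, so for them A1 and A2 hold by hypothesis; thus the work is entirely about squares one of whose vertices is $\emptyset$.

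First I would verify A1: the forgetful functor $P_f^*\colon\Kern(\CC_{\bul})\ra\CC_n$ has a right adjoint $P_{f*}$. The natural guess is $P_{f*}G=(F_{\bul},\phi_{\bul})$ with $F_k=P_{h_k*}G$, where $h_k\colon[1,\ldots,n+1]\ra[1,\ldots,k+1]$ is a suitable surjection — concretely one builds $P_{f*}$ out of the push-forwards $P_{\bul*}$ of $\CC_{\bul}$ and uses the base-change isomorphisms (A2 for $\CC_{\bul}$) to produce the structure isomorphisms $\phi_{\bul}$ and to verify the cocycle condition. Then one checks the adjunction $\Hom_{\CC_n}(P_f^*H,G)\cong\Hom_{\Kern(\CC_{\bul})}(H,P_{f*}G)$; a morphism on the left is a single arrow $F_n\ra F_n'$ in $\CC_n$, and one must see that the descent compatibility forces it to come from, and determine, a morphism of descent data into $P_{f*}G$. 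This is the comonadic/cosimplicial analogue of the statement that the forgetful functor from comodules has a right adjoint (a cofree-comodule functor), and the verification is formal but somewhat lengthy.

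Next I would check A2 for the new exact Cartesian squares. An exact Cartesian square in $\Delta_0$ with a vertex $\emptyset$ looks like
$$\xymatrix{[1,\ldots,n+1]&\emptyset \ar[l]\\ [1,\ldots,m+1] \ar[u] & \emptyset, \ar[l]\ar[u]}$$
so $r=-1$ and the bottom-right corner is $\emptyset=\CC_{-1}=\Kern(\CC_{\bul})$; exactness forces $\mathrm{Im}\,f'\cup\mathrm{Im}\,g'=[1,\ldots,n+m+1-(-1)+1]$ wait — more precisely one of the two parallel pairs of arrows is "injective" in the degenerate sense (a map out of $\emptyset$ is vacuously injective) and the index set of the composite is the disjoint union. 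In all such squares one base-change morphism compares $P_g^*\circ(\text{forgetful})$ with $(\text{push-forward})\circ P_{g'}^*$ applied to a descent datum; unwinding the definitions of the forgetful $P^*$ (Definition~\ref{def_dxbar1}) and of the adjoint $P_{f*}$ from the previous step, the base-change map becomes precisely one of the structure isomorphisms $\phi$ of the descent datum, or a composite of such with A2-isomorphisms of $\CC_{\bul}$, hence an isomorphism. I would organize this by the shape of the square: whether $\emptyset$ appears once or twice, and which of $f,g$ is the map out of $\emptyset$.

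The main obstacle I anticipate is bookkeeping rather than any conceptual difficulty: one has to pin down the right adjoint $P_{f*}$ to the forgetful functor explicitly enough that the base-change morphisms of~\eqref{equation_bc} can be computed, and then match them, square by square, against the structure data $\phi_{\bul}$ of a descent object while keeping track of all the canonical isomorphisms $\epsilon$ and the cocycle condition. A cleaner route, which I would mention as an alternative, is to invoke Proposition~\ref{prop_twotypes}-style identification of $\Kern(\CC_{\bul})$ with comodules over the comonad $P^*P_*$ on $\CC_0$ (once A1 holds), and then read A1 and A2 for $\~{\CC}_{\bul}$ off the standard properties of the forgetful/cofree adjunction for comodules; but since that identification is proved later in the paper, for self-containedness I would carry out the direct cosimplicial verification sketched above.
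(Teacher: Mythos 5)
Your overall strategy is the same as the paper's: everything reduces to the new functors attached to maps out of $\emptyset$, one constructs a right adjoint to the forgetful functor by hand out of the data of $\CC_{\bul}$, and then A2 for the new squares is checked by unwinding that construction against the base-change isomorphisms. The paper streamlines the bookkeeping you worry about by decomposing $\emptyset\ra[1,\ldots,n]$ as $\emptyset\ra[1]\ra[1,\ldots,n]$ (and similarly for the A2 squares), so that only the single forgetful functor $P^*\colon\Kern(\CC_{\bul})\ra\CC_0$ and the single square with corners $\emptyset,[1],[1],[1,2]$ ever need to be treated; you could adopt the same reduction and avoid the square-by-square case analysis.

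However, your candidate for the right adjoint is wrong, and this is the one genuinely load-bearing step. You propose $P_{f*}G=(F_{\bul},\phi_{\bul})$ with $F_k=P_{h_k*}G$ for a single map $h_k$. First, this does not typecheck with the paper's conventions (for $h\colon[1,\ldots,n+1]\ra[1,\ldots,k+1]$ the pushforward $P_{h*}$ goes $\CC_k\ra\CC_n$, so it cannot be applied to $G\in\CC_n$ to produce an object of $\CC_k$). More importantly, no bare pushforward (or bare pullback) can be correct: the right adjoint must produce the \emph{cofree} descent datum, whose underlying object is a pushforward of a pullback. In the paper's normalization the adjoint to $P^*\colon\Kern(\CC_{\bul})\ra\CC_0$ is $P_*F=(P_{2*}P_1^*F,\theta_F)$, i.e.\ one pulls $F$ back along one projection $\CC_0\ra\CC_1$ and pushes forward along the other; the codescent structure $\theta_F$ is then assembled from the base-change isomorphisms of A2 and the cosimplicial coherence data, and the unit and counit are built from $\eta$, $\e$ and the diagonal $D^*$. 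A single pushforward of $G$ carries no natural cocycle structure and will not satisfy the adjunction (already in the geometric model, the right adjoint to $p^*$ is $p_*$, whose pullback to $X$ is $p_{2*}p_1^*G$ by flat base change, not $p_{h*}G$ for any $h$). Since your verification of A2 explicitly relies on ``unwinding the definition of the adjoint $P_{f*}$ from the previous step,'' this error propagates: with the correct cofree formula, the new base-change morphisms $P^*P_*\ra P_{2*}P_1^*$ and $P^*P_*\ra P_{1*}P_2^*$ become essentially tautological, which is how the paper concludes.
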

\begin{proof}
To check A1, we need to prove that for any morphism $f$ in $\Delta_0$ the functor $P_f^*$ in the category $\~{\CC}_{\bul}$ possesses a right adjoint functor. If the morphism $f$ lies in~$\Delta$ then it has an adjoint functor by assumption, therefore we need to consider morphisms of the form $f\colon \emptyset \ra [1,\ldots,n]$. Decompose $f$ into a composition $\emptyset \ra [1]\to [1,\ldots,n]$ to see that it is enough to show the following: the forgetful functor
$P^*\colon \Kern(\CC_{\bul})\ra \CC_0$ has a right adjoint functor.

Define a functor $P_*\colon \CC_0\ra \Kern(\CC_{\bul})$ as follows. For
$F\in \CC_0$ put
$$P_*F=(P_{2*}P_1^*F,\theta_F).$$
Here $\theta_F\colon P_1^*P_2^*P_{1*}F\ra P_2^*P_{2*}P_1^*F$ is the composition of isomorphisms
$$P_1^*P_{2*}P_1^*F\xra{\sim} P_{23*}P_{12}^*P_1^*F\xra{\sim}
P_{23*}P_{13}^*P_1^*F\xra{\sim} P_2^*P_{2*}P_1^*F,$$
where the first and the third isomorphisms are the base changes and the second one is the isomorphism from the definition of a cosimplicial category.
On morphisms we put $P_*f=P_{2*}P_1^*f$. We leave to the reader to check that $\theta_F$ satisfies the cocycle condition.

To see that $P^*$ is adjoint to $P_*$, define adjunction morphisms
$$\eta\colon\Id_{\Kern(\CC_{\bul})}\ra P_*P^*$$ and $$\e\colon P^*P_*\ra
\Id_{\CC_0}.$$

Define $\eta$ on an object $(F,\theta)\in\Kern(\CC_{\bul})$ as a composition $$ \eta_{(F,\theta)}\colon F\xra{\eta_F} P_{2*}P_2^*F\xra{P_2^*\theta^{-1}} P_{2*}P_1^*F.$$
One can check that $\eta_{(F,\theta)}$ is compatible with $\theta$ and $\theta_F$ and hence is a morphism in $\Kern(\CC_{\bul})$.

Define $\e$ on an object $F\in\CC_0$:
$$\e_F\colon P_{2*}P_1^*F\xra{\eta} P_{2*}D_*D^*P_1^*F\xra{\sim} \Id_*\Id^*F=F.$$

The definitions of morphisms $\eta$ and $\e$ and properties of cosimplicial categories imply that both compositions
\begin{gather*}
P_*\xra{\eta P_*} P_*P^*P_* \xra{P_*\e} P_*,\\
P^*\xra{P^*\eta} P^*P_*P^* \xra{\e P^*} P^*
\end{gather*}
are identity. Therefore, the  functors $P^*$ and $P_*$ are adjoint.

\medskip
Recall assumption A2: the base change formula holds for exact Cartesian squares
in~$\Delta_0$.
For squares in  $\Delta$ the base change formula holds by assumption, hence we need to consider squares of the form
$$\xymatrix{[1,\ldots,m+n]&[1,\ldots,n] \ar[l]_-{f'}\\
[1,\ldots,m] \ar[u]_{g'} & \emptyset. \ar[l]_{f} \ar[u]_g}$$
Decomposing $f$ and $g$, we reduce to the case of the following square:
$$\xymatrix{[1,2]&[1] \ar[l]_-{i_2}\\
[1] \ar[u]_{i_1} & \emptyset, \ar[l]_{f} \ar[u]_g}.$$
That is, one has to prove that natural morphisms
$P^*P_*\ra P_{2*}P_1^*$ and $P^*P_*\ra P_{1*}P_2^*$  in $\CC_0$
are isomorphisms.
This is done by a straightforward argument based on definitions.
Proposition~\ref{prop_augmcosimplcat} is proved.
\end{proof}

\section{Comonads and comodules}\label{section_comonad}

We recall some facts from comonad theory. More details can be found in books by
Barr-Wells~\cite[chapter 3]{TTT} and MacLane~\cite[chapter 6]{ML}.

Let $\CC$ be a category.
\begin{definition}
\label{def_comonad} A~\emph{comonad} $\TT=(T,\e,\delta)$ (also the name
\emph{standard construction} is used) on the category
$\CC$ consists of a functor $T\colon \CC\ra \CC$ and of natural transformations of functors $\e\colon T\ra
\Id_{\CC}$ and $\delta\colon T\ra T^2=TT$ such that the following diagrams are commutative:
$$\xymatrix{
T \ar[r]^{\delta} \ar@{=}[rd] \ar[d]^{\delta} & T^2 \ar[d]^{T\e} \\ T^2 \ar[r]^{\e
T} & T, }\qquad \xymatrix{T \ar[r]^{\delta} \ar[d]^{\delta} & T^2 \ar[d]^{T\delta}\\
T^2 \ar[r]^{\delta T} & T^3. }
$$
\end{definition}

\begin{example}
\label{mainexample} Consider a pair of adjoint functors: $P^* \colon\BB\ra \CC$
(left) and $P_*\colon \CC\ra \BB$ (right). Let $\eta\colon \Id_{\BB}\ra P_*P^*$ and
$\e\colon P^*P_*\ra \Id_{\CC}$ be the natural adjunction morphisms. Define a triple $(T,\e,\delta)$ by taking $T=P^*P_*$ and $\delta=P^*\eta P_* \colon P^*P_*\ra P^*P_*P^*P_*$. Then $\TT=(T,\e,\delta)$ is a comonad on $\CC$.
\end{example}

In fact, every comonad can be obtained in this way from a pair of adjoint functors. This follows from the below construction due to Eilenberg-Moore.

\begin{definition}
\label{def_comodule}
  Suppose $\TT=(T,\e,\delta)$ is a comonad on $\CC$.
A \emph{comodule} over $\TT$ (or a \emph{$\TT$-coalgebra}) is a pair
$(F,h)$ where $F\in \Ob  \CC$ and $h\colon F\ra TF$ is a morphism
satisfying the following two conditions:
\begin{enumerate}
\item the composition
$$F\xra{h} TF \xra{\e F} F$$
is the identity;
\item the diagram
$$\xymatrix{
F \ar[r]^h \ar[d]^h  &  TF \ar[d]^{Th} \\
TF \ar[r]^{\delta F} & T^2F }$$ commutes.
\end{enumerate}
A \emph{morphism} between
comodules $(F_1,h_1)$ and $(F_2,h_2)$ is by definition a morphism
$f\colon F_1\ra F_2$ in $\CC$ such that the diagram
$$\xymatrix{F_1 \ar[r]^f \ar[d]_{h_1} & F_2 \ar[d]^{h_2} \\ TF_1 \ar[r]^{Tf} & TF_2}$$
commutes.
\end{definition}

All comodules over a given comonad $\TT$ on $\CC$ form a category
which is denoted $\CC_{\TT}$. Define a functor $Q_*\colon \CC\ra
\CC_{\TT}$ by
$$Q_*F=(TF,\delta F),\qquad Q_*f=Tf,$$
define $Q^*\colon \CC_{\TT}\ra \CC$ to be the forgetful functor: $(F,h)\mapsto F$. Then
the pair of functors $(Q^*,Q_*)$ is an adjoint pair and it generates
the comonad $\TT$ as in Example~\ref{mainexample}.

The category $\CC_{\TT}$ inherits some properties of $\CC$. It is not
hard to verify the below proposition
\begin{predl}
\label{prop_adab} Suppose $\TT=(T,\e,\delta)$ is a comonad on a
category $\CC$. If $\CC$ is additive and $T$ is an additive functor
then $\CC_{\TT}$ is also additive. If $\CC$ is abelian and $T$ is
left exact then $\CC_{\TT}$ is also abelian.
\end{predl}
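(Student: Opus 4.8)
The plan is to verify the two claims of Proposition~\ref{prop_adab} directly from the definitions, checking that the relevant constructions on comodules are inherited from $\CC$.

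\medskip

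\textbf{Additivity.} First I would note that the forgetful functor $Q^*\colon\CC_\TT\ra\CC$ is faithful, and that $Q_*\colon\CC\ra\CC_\TT$ takes an object $F$ to $(TF,\delta F)$. Given two comodules $(F_1,h_1)$ and $(F_2,h_2)$, the hom-set $\Hom_{\CC_\TT}((F_1,h_1),(F_2,h_2))$ is by definition the subset of $\Hom_\CC(F_1,F_2)$ consisting of those $f$ with $Tf\circ h_1=h_2\circ f$. Since $T$ is additive, this condition is closed under addition and negation, so this subset is a subgroup of the abelian group $\Hom_\CC(F_1,F_2)$; hence $\CC_\TT$ is preadditive and $Q^*$ is additive on morphisms. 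Next I would construct the biproduct of $(F_1,h_1)$ and $(F_2,h_2)$ as $(F_1\oplus F_2,\ h_1\oplus h_2)$, using the canonical identification $T(F_1\oplus F_2)\cong TF_1\oplus TF_2$ afforded by additivity of $T$; one checks the counit and coassociativity axioms componentwise from those for $h_1,h_2$, and checks that the structure maps of $F_1\oplus F_2$ in $\CC$ are comodule morphisms for this comodule structure, realising this object as a biproduct in $\CC_\TT$. The zero object of $\CC_\TT$ is $(0,0)$. So $\CC_\TT$ is additive.

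\medskip

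\textbf{Abelianness when $T$ is left exact.} Assuming $\CC$ abelian and $T$ left exact, I would build kernels and cokernels in $\CC_\TT$ and verify the abelian axiom. For a morphism $f\colon(F_1,h_1)\ra(F_2,h_2)$, let $k\colon K\ra F_1$ be the kernel of $f$ in $\CC$. Left exactness of $T$ means $Tk\colon TK\ra TF_1$ is the kernel of $Tf$; since $Tf\circ h_1\circ k=h_2\circ f\circ k=0$, the morphism $h_1\circ k$ factors uniquely through $Tk$, giving $h_K\colon K\ra TK$. One then checks the two comodule axioms for $(K,h_K)$ by using that $Tk$ (and $T^2k=TTk$, again by left exactness) is monic and reducing the identities to those for $h_1$; and one checks that $k$ is a comodule morphism and has the universal property of a kernel in $\CC_\TT$ (a comodule morphism into $(F_1,h_1)$ killed by $f$ is a morphism in $\CC$ into $K$ that is automatically compatible with $h_K$, again because $Tk$ is monic). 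For cokernels I would instead appeal to the adjunction: the forgetful functor $Q^*\colon\CC_\TT\ra\CC$ has right adjoint $Q_*$, so $Q^*$ is cocontinuous and $\CC_\TT$ has cokernels computed in a compatible way — more concretely, dualising, $\CC_\TT$ is the category of algebras over the monad on $\CC^{\mathrm{op}}$, and one may cite the standard fact (Barr--Wells~\cite{TTT}) that the Eilenberg--Moore category of a monad $T'$ on an abelian category is abelian when $T'$ is right exact; here $T'={}$(the monad on $\CC^{\mathrm{op}}$ induced by $T$) is right exact precisely because $T$ is left exact. Finally, monomorphisms in $\CC_\TT$ are exactly the morphisms that are monic in $\CC$ (the forgetful functor reflects monos since it has a right adjoint, and a morphism monic in $\CC$ is clearly monic in $\CC_\TT$), and similarly the description of kernels/cokernels shows every mono is a kernel and every epi is a cokernel, giving the abelian axiom.

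\medskip

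The routine part is the diagram-chasing that verifies the comodule axioms for the induced structure maps; the only place that requires genuine care — and the main obstacle — is the construction of cokernels in $\CC_\TT$, since the naive guess ``take the cokernel in $\CC$ and induce a comodule structure'' does \emph{not} work: inducing a structure map $\mathrm{coker}(f)\ra T\,\mathrm{coker}(f)$ would require $T$ to be \emph{right} exact (to know $T$ applied to a cokernel is again a cokernel), which we are not assuming. The clean way around this is exactly the dualisation to monads mentioned above, invoking the dual of the cited Barr--Wells result; alternatively one constructs the cokernel by hand as a coequalizer in $\CC_\TT$ using that $Q^*$ preserves colimits, but I would prefer the first route since it is shortest and uses only facts already available.
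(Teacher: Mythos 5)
The paper gives no proof of this proposition (it is dismissed with ``it is not hard to verify''), so there is nothing of the author's to compare against; your constructions, taken together, do establish the statement. However, your closing paragraph contains a genuine error of diagnosis: you have the roles of kernels and cokernels exactly reversed. For a \emph{comonad} the naive construction of cokernels works with no exactness hypothesis on $T$ at all. If $c\colon F_2\ra C$ is the cokernel of $f$ in $\CC$, then $Tc\circ h_2\circ f=Tc\circ Tf\circ h_1=T(cf)\circ h_1=0$, so $Tc\circ h_2$ descends through the universal property of the cokernel to a map $h_C\colon C\ra TC$; the two comodule axioms for $h_C$, the fact that $c$ is a comodule morphism, and its universal property in $\CC_{\TT}$ all follow from naturality of $\e$ and $\delta$ together with the fact that $c$ is an epimorphism. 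Nowhere is it needed that $TC$ be the cokernel of $Tf$. The step that genuinely requires the hypothesis on $T$ is the one you in fact carried out correctly: inducing the structure map on the \emph{kernel} requires factoring $h_1\circ k$ through $Tk$, i.e.\ requires $Tk$ to be the kernel of $Tf$, which is precisely left exactness. (Dually, for algebras over a monad it is the cokernels that need right exactness and the kernels that come for free; this is presumably the source of the confusion.)

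Your detour through $\CC^{\mathrm{op}}$ and the monad-theoretic statement is therefore unnecessary, though not incorrect as a route to the conclusion. Note also that ``$Q^*$ has a right adjoint, hence is cocontinuous, hence $\CC_{\TT}$ has cokernels'' is a non sequitur as written --- preservation of colimits does not give their existence; the correct statement is that the forgetful functor from comodules \emph{creates} colimits. Likewise, that $Q^*$ \emph{reflects} monomorphisms follows from its faithfulness, not from the adjunction; what you actually need is that it \emph{preserves} them, which follows from its preservation of kernels. With the direct cokernel construction substituted in, the remainder of your argument (monos and epis detected on underlying objects, every mono a kernel and every epi a cokernel by uniqueness of the induced structure maps) is sound.
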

On the contrary, it is not clear why should the category $\CC_{\TT}$ be
triangulated for a comonad $\TT=(T,\e,\delta)$ with an exact functor
$T$ on a triangulated category $\CC$. One can attempt to define a
triangulated structure on $\CC_{\TT}$ in the following way.
\begin{definition}
\label{quasitriang} Define the shift functor on $\CC_{\TT}$:
$(F,h)[1]=(F[1],h[1])$, $f[1]=f[1]$. Define triangles in $\CC_{\TT}$ to be
diagrams $(F',h')\ra (F,h)\ra (F'',h'')\ra (F',h')[1]$ such that
$F'\ra F\ra F''\ra F'[1]$ is a triangle in $\CC$.
\end{definition}
Unfortunately, taking
cones in $\CC$ is not functorial, therefore without additional
assumptions it is not possible to verify that morphisms in $\CC_{\TT}$ extend into triangles.
But sometimes the above definition does define a triangulated
structure on $\CC_{\TT}$; if it is the case, we will simply say that
$\CC_{\TT}$ is triangulated (keeping in mind that the triangulated
structure is exactly the one defined here). Later
(Proposition~\ref{prop_cttriang}) we shall see that $\CC_{\TT}$ is
triangulated in some special cases: in fact, we will show that
$\CC_{\TT}$ is equivalent (as an abstract category) to some other
triangulated category.

The following result says that the Eilenberg-Moore construction gives a
terminal object among all adjoint pairs producing the same comonad.

\begin{predl}[Comparison theorem, {\cite[3.2.3]{TTT}, \cite[6.3]{ML}}]
\label{th_comparison} Assume that a comonad $\TT=(T,\e,\delta)$ on
a category $\CC$ is defined by an adjoint pair of functors
$P^*\colon \BB\ra \CC, P_*\colon \CC\ra \BB$. Then there exist a
unique (up to an isomorphism) functor (called \emph{comparison
functor}) $\Phi\colon \BB\ra \CC_{\TT}$ such that the diagram of
categories
$$\xymatrix{
&& {\BB} \ar[dd]^{\Phi} \ar@<1mm>[lld]^{P^*}\\
{\CC} \ar@<1mm>[rru]^{P_*} \ar@<-1mm>[rrd]_{Q_*} && \\
&&{\CC_{\TT}} \ar@<-1mm>[llu]_{Q^*} }$$ commutes, i.e. both triangles are commutative:
$$\Phi P_*\cong Q_*,\qquad Q^*\Phi\cong P^*.$$
\end{predl}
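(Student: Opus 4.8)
The plan is to construct the comparison functor $\Phi\colon\BB\ra\CC_{\TT}$ explicitly, using the comonad structure coming from the adjunction, and then verify the two commutativity relations and uniqueness.

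First I would define $\Phi$ on objects. Given $B\in\BB$, the unit $\eta\colon\Id_\BB\ra P_*P^*$ provides a morphism $\eta_B\colon B\ra P_*P^*B$; applying $P^*$ gives $P^*\eta_B\colon P^*B\ra P^*P_*P^*B=T(P^*B)$. I would set $\Phi(B)=(P^*B,\,P^*\eta_B)$ and $\Phi(f)=P^*f$ on morphisms. The first thing to check is that $(P^*B,P^*\eta_B)$ is genuinely a $\TT$-comodule in the sense of Definition~\ref{def_comodule}: the counit axiom $\e(P^*B)\circ P^*\eta_B=\id$ is exactly one of the triangle identities for the adjunction $(P^*,P_*)$ (since the comonad counit $\e$ is the adjunction counit), and the coassociativity square $T(P^*\eta_B)\circ P^*\eta_B=\delta(P^*B)\circ P^*\eta_B$ unwinds, using $\delta=P^*\eta P_*$ and naturality of $\eta$, into the identity $P^*\eta_{P_*P^*B}\circ P^*\eta_B=P^*(P_*P^*\eta_B\circ\eta_B)$, which holds by naturality of $\eta$ applied to $\eta_B$. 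That $\Phi$ respects the comodule morphism condition is again naturality of $\eta$.

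Next I would verify the two triangles. For $Q^*\Phi\cong P^*$: this is immediate on the nose, since $Q^*$ is the forgetful functor and $Q^*\Phi(B)=P^*B$. For $\Phi P_*\cong Q_*$: on an object $C\in\CC$ we have $\Phi P_*C=(P^*P_*C,\,P^*\eta_{P_*C})=(TC,\,P^*\eta_{P_*C})$, while $Q_*C=(TC,\delta C)=(TC,\,P^*\eta P_*C)$; these coincide because $\delta C=(P^*\eta P_*)_C=P^*(\eta_{P_*C})=P^*\eta_{P_*C}$, and on morphisms both send $f$ to $Tf$. So in fact both isomorphisms can be taken to be identities, which is cleaner than claiming a mere isomorphism; I would remark on this.

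Finally, uniqueness. Suppose $\Psi\colon\BB\ra\CC_\TT$ is another functor with $Q^*\Psi\cong P^*$ and $\Psi P_*\cong Q_*$. Writing $\Psi(B)=(UB,h_B)$ with $U=Q^*\Psi$, the first relation forces $UB\cong P^*B$ naturally, so I may assume $U=P^*$. Then $h_B\colon P^*B\ra TP^*B$ must, by the comodule morphism condition applied to $\eta_B\colon B\to P_*P^*B$ together with $\Psi P_*\cong Q_*$ and the comodule axiom for $\Psi P_*(P^*B)$, be forced to equal $P^*\eta_B$; this is the standard argument from \cite[3.2.3]{TTT} and I would cite it rather than reproduce every diagram. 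The only genuine subtlety — and the step I expect to demand the most care — is bookkeeping the canonical isomorphisms so that ``$\Phi$ is unique up to isomorphism'' is stated correctly (the $2$-categorical coherence), but since the paper only needs existence of $\Phi$ with the stated properties, and we have exhibited it with equalities rather than isomorphisms, the uniqueness clause follows the cited classical proof verbatim.
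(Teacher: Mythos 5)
Your construction of $\Phi$ -- sending $H$ to $(P^*H,\,P^*\eta_H)$ and $f$ to $P^*f$ -- is exactly the one the paper uses, and your verifications of the comodule axioms and of the two commuting triangles are correct (the paper simply declares these checks ``straightforward''). The proposal is correct and follows essentially the same route as the paper's proof.
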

\begin{proof}
Define $\Phi\colon \BB\ra \CC_{\TT}$ to be a functor assigning to
an object $H\in \Ob  \BB$ a pair $(P^*H,h)$ where $h\colon P^*H\ra
P^*P_*P^*H$ is $P^*$ applied to the canonical mapping $\eta\colon H\ra P_*P^*H$, and
assigning to a morphism $f$ a morphism $P^*f$. It can be checked in
a straightforward way that $\Phi$ is the required functor.
\end{proof}

We will need criteria for a comparison functor
of being fully faithful and of being an equivalence. Before formulating
these criteria we recall the notions of an equalizer and of a
contractible equalizer.

\begin{definition}
An \emph{equalizer} of a pair of morphisms $d_1,d_2\colon F_1\ra
F_2$ is a morphism $d\colon F\ra F_1$ such that
\begin{enumerate}
  \item $d_1 d=d_2 d$,
  \item for any morphism $d'\colon F'\ra F_1$ such that $d_1 d'=d_2 d'$ there exists a unique morphism $f\colon F'\ra F$ such that $d f=d'$.
\end{enumerate}
\end{definition}
\begin{definition}
\label{contreq} A \emph{contractible equalizer} of a pair of
morphisms $d_1,d_2\colon F_1\ra F_2$ in a category is a morphism
$d\colon F\ra F_1$ for which there exist morphisms $s$ and $t$ as
shown below
$$\xymatrix{F \ar[rr]^d && F_1 \ar@/^2em/[ll]^s \ar@<1mm>[rr]^{d_1} \ar@<-1mm>[rr]_{d_2} && F_2 \ar@/^2em/[ll]^t}$$ satisfying the equalities
\begin{align*}
d_1 d =d_2 d, \quad s d =\Id,\quad t d_1 =\Id, \quad  t d_2 &=d s.
\end{align*}
\end{definition}

Remark that any contractible equalizer is an equalizer and any
equalizer is a monomorphism. Also note that contractible equalizers
are preserved by all functors.

A morphism $f\colon H'\ra H$ in a category $\BB$ is a \emph{split
embedding} or, briefly, \emph{splits} if there exists a left inverse
morphism  $f'$: $f'f=\Id_{H'}$. If the category $\BB$ is additive, it is
also said that $f$ is an embedding of a direct summand.

Recall that a functor $\Phi$ is called \emph{conservative} if for
any morphism $f$ such that $\Phi(f)$ is an isomorphism, $f$ itself
is an isomorphism.

The following theorem is known as Precise Tripleability Theorem, or PTT.
\begin{theorem}[Beck, {\cite[3.14]{TTT},\cite[6.7]{ML}}]In the above notation
\label{th_beck}
\begin{enumerate}
\item The functor $\Phi$ is fully faithful iff for any $H\in \Ob \BB$ the natural
morphism $\eta H\colon H\ra P_*P^*H$ is an equalizer (of some pair).
\item The functor $\Phi$ is an equivalence iff $P^*$ is conservative and for any pair $d_1,d_2\colon H_1\ra H_2$ of morphisms in $\BB$ for which there exists a contractible equalizer $f\colon F\ra P^*H_1$ of the pair $P^*d_1,P^*d_2\colon P^*H_1\ra P^*H_2$,
    there exists an equalizer $d\colon H\ra H_1$ of the pair $(d_1,d_2)$ such that $P^*d\cong f$.
\end{enumerate}
\end{theorem}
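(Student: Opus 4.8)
The plan is to deduce both statements from the adjunction identities relating $\Phi,P^*,P_*,Q^*,Q_*$ together with one structural observation: for every comodule $(F,h)$ the \emph{canonical presentation}
$$(F,h)\xra{\,h\,}Q_*F\ \rightrightarrows\ Q_*TF,$$
with the two right-hand arrows $\delta_F$ and $Q_*h=Th$, is an equalizer in $\CC_\TT$ whose image under the forgetful functor $Q^*$ is a \emph{contractible} equalizer in $\CC$, split by the counit $\e$. In particular the pair $(\delta_F,Th)$ in $\CC$ always has $h$ as a contractible equalizer. Using $Q^*\Phi=P^*$, $\Phi P_*\cong Q_*$ and $\delta=P^*\eta P_*$, one checks that this presentation is $\Phi$ applied to the pair $(\eta_{P_*F},P_*h)$ of maps $P_*F\to P_*TF$ in $\BB$, and that taking $F=P^*H$ exhibits $(\delta_{P^*H},Th_H)$, $h_H:=P^*\eta_H$, as $P^*$ applied to the canonical pair $(\eta_{P_*P^*H},P_*P^*\eta_H)$ in $\BB$.

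For Part 1, I would first rewrite the Hom-sets. By the adjunction $(P^*,P_*)$, a comodule morphism $\Phi H\to\Phi H'$ is the same as a morphism $H\to P_*P^*H'$ equalizing the pair $(\eta_{P_*P^*H'},P_*P^*\eta_{H'})$, and under this identification the map induced by $\Phi$ on Hom-sets is $f\mapsto\eta_{H'}\circ f$. Hence $\Phi$ is fully faithful precisely when each $\eta_H$ is \emph{the} equalizer of this canonical pair, which gives "only if" at once. For "if", assume every $\eta_H$ is an equalizer of some pair; then each $\eta_H$ is a monomorphism, so naturality of $\eta$ forces $P^*$ to be faithful. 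Given $\bar g\colon H\to P_*P^*H'$ equalizing the canonical pair, one factors $P^*\bar g$ through the contractible equalizer $P^*\eta_{H'}$, and then, using that $\eta_{H'}$ is an equalizer of \emph{some} pair $(u,v)$ together with faithfulness of $P^*$, one checks $u\bar g=v\bar g$, hence that $\bar g$ factors through $\eta_{H'}$. This is the one spot where "equalizer of some pair" does more than "monomorphism", and it is the main subtlety of this part.

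For Part 2, "only if" is short: an equivalence $\Phi$ is fully faithful (so Part 1 applies), $P^*=Q^*\Phi$ is conservative because $Q^*$ and $\Phi$ are, and for a pair $(d_1,d_2)$ whose image admits a contractible equalizer $f$ one transports $f$ back along $\Phi$, using the standard fact that $Q^*\colon\CC_\TT\to\CC$ creates equalizers of $Q^*$-split pairs, to produce an equalizer $d$ of $(d_1,d_2)$ in $\BB$ with $P^*d\cong f$. For "if" there are two things to prove. Full faithfulness: the canonical pair $(\eta_{P_*P^*H},P_*P^*\eta_H)$ is $P^*$-split by the first paragraph, so by hypothesis it has an equalizer $m_H$ in $\BB$ preserved by $P^*$; comparing $m_H$ with $\eta_H$ (which also equalizes the pair) yields $c$ with $m_Hc=\eta_H$, and $P^*c$ is an isomorphism, hence $c$ is by conservativity, so $\eta_H$ is an equalizer and Part 1 applies. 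Essential surjectivity: for $(F,h)\in\CC_\TT$, its canonical presentation exhibits it in $\CC_\TT$ as $\Phi$ of the $P^*$-split pair $(\eta_{P_*F},P_*h)$ in $\BB$; by hypothesis the latter has an equalizer $e$ in $\BB$ with $P^*e\cong h$, and applying $Q^*$ and invoking creation of equalizers of $Q^*$-split pairs shows that $\Phi e$ is the equalizer of $(\delta_F,Q_*h)$ in $\CC_\TT$, hence $\Phi E\cong(F,h)$.

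The hardest part is the recurring interplay between equalizers in $\BB$, equalizers in $\CC_\TT$, and contractible equalizers in $\CC$ — concretely, the facts that contractible equalizers are absolute and that $Q^*$ creates equalizers of $Q^*$-split pairs (the dual of the classical creation lemma for the forgetful functor from algebras over a monad), together with the descent of a factorization from $\CC$ to $\BB$ in Part 1's "if" direction. Once those are in place, everything else is formal bookkeeping with the triangle identities of the two adjunctions.
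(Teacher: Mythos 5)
The paper does not prove this theorem: it is quoted verbatim from the literature (Barr–Wells 3.14, MacLane VI.7), so there is no in-paper proof to compare against. Your argument is correct and is essentially the classical proof from those sources — the canonical split presentation $(F,h)\to Q_*F\rightrightarrows Q_*TF$, the identification of comodule morphisms $\Phi H\to\Phi H'$ with maps equalizing the pair $(\eta_{P_*P^*H'},P_*P^*\eta_{H'})$, and the creation of equalizers of $Q^*$-split pairs by the forgetful functor are exactly the ingredients of the standard proof of the precise (co)tripleability theorem.
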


\begin{corollary}
\label{cor_finalcond}
\begin{enumerate}
\item
If the categories $\BB$ and $\CC$ are abelian then the comparison
functor $\Phi$ is fully faithful iff $\eta H\colon H\ra P_*P^*H$ is injective for
any object $H$ in $\BB$. Suppose, moreover, that the functor $P^*$
is exact. Then $\Phi$ is fully faithful iff $\Phi$ is an
equivalence and iff $P^*H\ne 0$ for any $H\ne
0\in\BB$.
\item If the categories $\BB$ and $\CC$ are triangulated
then the comparison functor $\Phi$ is fully faithful iff $\eta H\colon H\ra P_*P^*H$ is a split embedding for any object $H$ in $\BB$.
\end{enumerate}
\end{corollary}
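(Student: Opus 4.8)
The plan is to deduce both statements from Beck's theorem (Theorem~\ref{th_beck}), using two elementary facts: in an abelian category a morphism is an equalizer of \emph{some} pair exactly when it is a monomorphism (every mono is a kernel, and a kernel of $f$ is the equalizer of $f$ and $0$), and in a triangulated category every monomorphism splits. \textbf{Part (1), first claim.} By Theorem~\ref{th_beck}(1), $\Phi$ is fully faithful iff for every $H\in\BB$ the unit $\eta H\colon H\to P_*P^*H$ is an equalizer of some pair; by the remark just made this is equivalent to $\eta H$ being injective, which is the assertion.

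\textbf{Part (1), second claim.} Assume $P^*$ exact. I would argue in a cycle. ``$\Phi$ equivalence $\Rightarrow$ $\Phi$ fully faithful'' is trivial. ``$\Phi$ fully faithful $\Rightarrow$ $P^*H\ne0$ whenever $H\ne0$'': by the first claim $\eta H$ is injective, and $P^*H=0$ would give $P_*P^*H=0$, hence $H=0$. ``$P^*H\ne0$ for $H\ne0$ $\Rightarrow$ $\Phi$ equivalence'': here I would verify the two hypotheses of Theorem~\ref{th_beck}(2). First, $P^*$ is conservative: if $P^*f$ is an isomorphism, then exactness of $P^*$ gives $P^*(\Ker f)=\Ker(P^*f)=0$ and $P^*(\operatorname{coker}f)=\operatorname{coker}(P^*f)=0$, so $\Ker f=\operatorname{coker}f=0$ by hypothesis and $f$ is an isomorphism. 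Second, for any $d_1,d_2\colon H_1\to H_2$ in $\BB$ the equalizer $k=\Ker(d_1-d_2)\colon K\to H_1$ exists, and since $P^*$ is exact it sends $k$ to the equalizer of $P^*d_1,P^*d_2$; as a contractible equalizer of $P^*d_1,P^*d_2$ is in particular an equalizer of that pair, uniqueness of equalizers gives $P^*k\cong f$ for any such contractible equalizer $f$. Hence Theorem~\ref{th_beck}(2) applies and $\Phi$ is an equivalence.

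\textbf{Part (2).} By Theorem~\ref{th_beck}(1), $\Phi$ is fully faithful iff every $\eta H$ is an equalizer of some pair, so it suffices to show that in a triangulated category ``$\eta H$ is an equalizer of some pair'' is equivalent to ``$\eta H$ is a split embedding''. One direction is formal and holds in any category: a split embedding $e$ with retraction $r$ is the equalizer of $\Id$ and $er$, as a direct check of the universal property shows. For the converse, an equalizer is in particular a monomorphism, and I would invoke the splitting of monomorphisms in a triangulated category: completing $\eta H$ to a triangle $H\xra{\eta H}P_*P^*H\to C\xra{h}H[1]$ and rotating it shows $\eta H\circ h[-1]=0$, whence $h[-1]=0$ and $h=0$ since $\eta H$ is mono; a triangle whose connecting morphism vanishes splits, producing a retraction of $\eta H$. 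The bulk of the work is bookkeeping riding on Theorem~\ref{th_beck}; the one non-formal input is the splitting of monomorphisms in a triangulated category, the crux of Part (2), and, for the second half of Part (1), the observation that exactness of $P^*$ makes both hypotheses of Theorem~\ref{th_beck}(2) automatic.
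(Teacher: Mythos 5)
Your proof is correct and follows essentially the same route as the paper: both reduce everything to Beck's theorem, identify ``equalizer of some pair'' with ``monomorphism'' in the abelian case and with ``split embedding'' in the triangulated case (via the same cone/long-exact-sequence argument), and verify Beck's second condition from exactness of $P^*$ plus the non-vanishing hypothesis. The only differences are cosmetic — you organize the second claim of Part (1) as a three-step cycle and realize a split mono as the equalizer of $\Id$ and $er$ rather than as the kernel of the map to the cone — neither of which changes the substance.
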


Remark that adjoint functors between additive categories are
automatically additive.
\begin{proof}[Proof of Corollary~\ref{cor_finalcond}]
Note that in an additive category an equalizer of a pair of
morphisms $(f_1, f_2)$ is the same as a kernel of $f_1-f_2$. So for
additive categories ``to be an equalizer'' means ``to be a kernel''.

\noindent 1.
In abelian categories ``to be a kernel'' is the same as ``to be
injective''. Let us  prove the second part of the statement. If $H\ra
P_*P^*H$ is injective, then obviously $H\ne 0$ implies $P^*H\ne
0$. Vice versa, if $P^*$ does not vanish objects and $\Ker(H\ra
P_*P^*H)\ne 0$ then $\Ker(P^*H\ra P^*P_*P^*H)\ne 0$. But the morphism
$P^*H\ra P^*P_*P^*H$ is a split embedding by adjunction properties, we get a contradiction.
In order to show that $\Phi$ is an equivalence, we check conditions
of Theorem~\ref{th_beck}. Indeed, in  abelian categories equalizers
always exist and they are preserved by exact functors. The functor
$P^*$ is conservative because it is exact and does not vanish
objects.


\noindent 2.
 In triangulated categories ``to be a kernel'' is equivalent to
``to be a split embedding'', this proves the statement.
Evidently, any split embedding $f\colon H_1\to H_2$ is a kernel of the
morphism $H_2\to C(f)$.
Vice versa, suppose the morphism $f\colon H_1\to H_2$ is a kernel. Consider a distinguished triangle
$H_3\xra{h}H_1\xra{f} H_2\xra{g} H_3[1]$. Since $fh=0$ and $f$ is monomorphic, one has $h=0$. From the exact sequence
$$\Hom(H_2, H_1)\xra{f^*} \Hom(H_1, H_1)\xra{h^*=0} \Hom(H_3,H_1)$$
we deduce that there exists $f'\in \Hom(H_2, H_1)$ such that $f'f=\Id_{H_1}$,
hence $f$ splits.
\end{proof}

Recall that the category $\BB$ is called \emph{Karoubian complete} if any projector in $\BB$ splits. That is, for any object $H$ in $\BB$ and morphism $f\colon H\ra H$ such that $f^2=f$ there exist an object $H'$ called \emph{image} of $f$ and morphisms $\s\colon H'\ra H$ and $\rho \colon H\ra H'$ such that $\rho \s=\Id_{H'}$, $\s\rho=f$.

\begin{corollary}
\label{cor_suffcond} Suppose $(P^*,P_*)$ is an adjoint pair of
functors between categories $\BB$ and $\CC$. Suppose that $\BB$ is
Karoubian complete. If the natural morphism of functors $\eta \colon
\Id_{\BB}\ra P_*P^*$ splits then the comparison functor is an
equivalence.
\end{corollary}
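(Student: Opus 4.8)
The plan is to verify the two conditions of Beck's theorem (Theorem~\ref{th_beck}) for the comonad $\TT=(P^*P_*,\e,\delta)$ of Example~\ref{mainexample} and the comparison functor $\Phi\colon\BB\ra\CC_{\TT}$. Write $M=P_*P^*$ for the induced monad on $\BB$, and let $r\colon M\ra\Id_{\BB}$ be a natural retraction of $\eta$, so $r\eta=\Id$.

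First I would dispose of full faithfulness. For every $H\in\BB$ the morphism $\eta H\colon H\ra MH$ is a split monomorphism, hence an equalizer of the pair $(\Id_{MH},\,\eta H\circ rH)$: any $b\colon Z\ra MH$ with $\eta H\circ rH\circ b=b$ factors uniquely through $\eta H$ via $rH\circ b$. By Theorem~\ref{th_beck}(1), $\Phi$ is fully faithful (Karoubian completeness is not needed for this). Next I would check that $P^*$ is conservative: if $P^*f$ is invertible for $f\colon a\ra b$, then $Mf$ is invertible, and using naturality of $\eta$ and of $r$ together with $r\eta=\Id$ one verifies that $rb\circ(Mf)^{-1}\circ\eta b$ is a two-sided inverse of $f$. (Alternatively: $\Phi$ is fully faithful hence reflects isomorphisms, the forgetful functor $Q^*\colon\CC_{\TT}\ra\CC$ is conservative, and $P^*\cong Q^*\Phi$ by Proposition~\ref{th_comparison}.)

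The real content is the second condition of Theorem~\ref{th_beck}(2). Let $d_1,d_2\colon H_1\rightrightarrows H_2$ in $\BB$ and suppose $(P^*d_1,P^*d_2)$ has a contractible equalizer $f\colon F\ra P^*H_1$ in $\CC$, with data $s\colon P^*H_1\ra F$ and $t\colon P^*H_2\ra P^*H_1$ (so $sf=\Id$, $tP^*d_1=\Id$, $tP^*d_2=fs$). Since contractible equalizers are preserved by all functors, $P_*f\colon P_*F\ra MH_1$ is a contractible equalizer of $(Md_1,Md_2)$ in $\BB$, with data $P_*s,P_*t$. Now set
$$\mathfrak p:=P_*s\circ\eta H_1\circ rH_1\circ P_*f\colon P_*F\ra P_*F.$$
The key claim is that $\mathfrak p$ is idempotent; this is a diagram chase using naturality of $\eta$ and of $r$ and the relations $Md_1\circ P_*f=Md_2\circ P_*f$, $P_*t\circ Md_1=\Id$, $P_*t\circ Md_2=P_*f\circ P_*s$, $rH_1\circ\eta H_1=\Id$. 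As $\BB$ is Karoubian complete, $\mathfrak p$ splits: $\mathfrak p=\hat\sigma\hat\rho$ with $\hat\rho\hat\sigma=\Id_H$, $\hat\sigma\colon H\ra P_*F$. Put $d:=rH_1\circ P_*f\circ\hat\sigma\colon H\ra H_1$. Then (again by direct manipulation with the same identities) $d$ equalizes $(d_1,d_2)$, and the morphisms $\hat\rho\circ P_*s\circ\eta H_1\colon H_1\ra H$ and $rH_1\circ P_*t\circ\eta H_2\colon H_2\ra H_1$ exhibit $d$ as a \emph{contractible} equalizer of $(d_1,d_2)$; in particular $d$ is the equalizer. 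Being contractible, $d$ is absolute, so $P^*d$ is a contractible equalizer of $(P^*d_1,P^*d_2)$, whence $P^*d\cong f$. Both conditions of Theorem~\ref{th_beck}(2) then hold, so $\Phi$ is an equivalence.

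The main obstacle is this last step: guessing the idempotent $\mathfrak p$ (the projector onto ``the part of $H_1$ that descends'') and pushing the two bookkeeping chases through — idempotency of $\mathfrak p$, and the contractibility of the equalizer $d$ it produces. These are flat-base-change-style computations interleaving the monad structure of $M=P_*P^*$, the splitting $r$ of $\eta$, and the data $(f,s,t)$ of the contractible equalizer; Karoubian completeness of $\BB$ enters exactly once, to split $\mathfrak p$.
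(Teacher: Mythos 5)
Your proof is correct and follows essentially the same route as the paper: verify Beck's two conditions, get conservativity from $f$ being a retract of $P_*P^*f$, apply $P_*$ to transport the contractible equalizer to $\BB$, and use Karoubian completeness to split a projector onto "the part that descends." The only cosmetic difference is that you write the idempotent $\mathfrak p=P_*s\circ\eta H_1\circ rH_1\circ P_*f$ explicitly, whereas the paper obtains the same projector from the universal property of the equalizer in a separately stated lemma; (also note the small typo $rb$ for $ra$ in your inverse formula for conservativity).
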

\begin{proof}
Since $\eta\colon \Id_{\BB}\ra P_*P^*$ splits, for any object $H$ in $\BB$ there is a projector $\pi\colon
P_*P^*H\ra P_*P^*H$ whose image is $H$, and this projector depends
naturally on $H$ (i.e., we have a morphism of functors $P_*P^*\ra P_*P^*$). We need to check two conditions from Beck theorem.
First, $P^*$ is conservative: if $P^*f$ is an isomorphism for some
morphism $f$ in $\BB$ then $P_*P^*f$ is also an isomorphism, and the
same is true for $f$.
 Further, suppose that $P^*$ of some pair $(f_1,f_2)$ of morphisms in $\BB$ has a contractible equalizer.
 Then $P_*P^*$ of this pair also has a contractible equalizer. Passing to images of projectors $\pi$'s by the lemma below, we see that $(f_1,f_2)$
 also has a contractible equalizer and it is preserved by $P^*$ as pointed out below Definition~\ref{contreq}.
\end{proof}
 \begin{lemma}
Let $f_1, f_2\colon H_1\ra H_2$ be two morphisms in a Karoubian
complete category~$\BB$, suppose that they are compatible with
projectors $\pi_1\colon H_1\ra H_1, \pi_2\colon H_2\ra H_2$. Then
a contractible equalizer diagram for the pair $(f_1,f_2)$ induces a
contractible equalizer diagram on images of $\pi_1$ and~$\pi_2$.
\end{lemma}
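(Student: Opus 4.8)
Compatibility of $f_1,f_2$ with the projectors means $f_i\pi_1=\pi_2 f_i$ for $i=1,2$, where $\pi_1^2=\pi_1$ and $\pi_2^2=\pi_2$. The plan is to build the required contractible equalizer on images explicitly, using only Karoubian completeness of $\BB$ and the defining equations of a contractible equalizer; nothing subtler is needed.

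First I would split the given projectors: since $\BB$ is Karoubian complete there are images $H_1',H_2'$ with maps $\sigma_i\colon H_i'\to H_i$ and $\rho_i\colon H_i\to H_i'$ satisfying $\rho_i\sigma_i=\Id$ and $\sigma_i\rho_i=\pi_i$. Then $f_1,f_2$ descend to $\bar f_i:=\rho_2 f_i\sigma_1\colon H_1'\to H_2'$, and it is a contractible equalizer of the pair $(\bar f_1,\bar f_2)$ that must be produced.

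Next, let $F\xrightarrow{d}H_1\rightrightarrows H_2$ (the double arrow being $f_1,f_2$), with sections $s\colon H_1\to F$ and $t\colon H_2\to H_1$, be the given contractible equalizer, so that $f_1 d=f_2 d$, $sd=\Id$, $tf_1=\Id$, $tf_2=ds$. I would set $\pi_F:=s\pi_1 d\colon F\to F$. Substituting $tf_2$ for $ds$, sliding $\pi_1$ past $f_i$ via the compatibility relation, using $f_1 d=f_2 d$ and finally $tf_1=\Id$, one checks that $\pi_F$ is idempotent; the same style of computation yields the key identity $d\pi_F=\pi_1 d$. Karoubian completeness of $\BB$ then splits $\pi_F$, giving $F'$ together with $\sigma_F\colon F'\to F$, $\rho_F\colon F\to F'$, $\rho_F\sigma_F=\Id$, $\sigma_F\rho_F=\pi_F$.

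Finally I would put $\bar d:=\rho_1 d\sigma_F$, $\bar s:=\rho_F s\sigma_1$, $\bar t:=\rho_1 t\sigma_2$ and verify the four contractible-equalizer identities $\bar f_1\bar d=\bar f_2\bar d$, $\bar s\bar d=\Id_{F'}$, $\bar t\bar f_1=\Id_{H_1'}$, $\bar t\bar f_2=\bar d\bar s$. Each reduces to a routine manipulation with $\sigma_i\rho_i=\pi_i$, $\rho_i\sigma_i=\Id$, $\rho_F\sigma_F=\Id$, the compatibility $f_i\pi_1=\pi_2 f_i$, the original four relations, and the two identities for $\pi_F$ just established (the relation $d\pi_F=\pi_1 d$ is exactly what forces $\bar t\bar f_2=\bar d\bar s$). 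I expect the only genuinely delicate steps to be verifying that $\pi_F=s\pi_1 d$ is a projector and verifying the last identity $\bar t\bar f_2=\bar d\bar s$, since these are the two points where the contractible-equalizer structure and the compatibility with projectors actually have to cooperate; the remaining checks are bookkeeping.
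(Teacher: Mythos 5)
Your proof is correct and follows essentially the same route as the paper: split $\pi_1,\pi_2$, produce an idempotent on the source $F$ of the contractible equalizer, split it, and conjugate $d$, $s$, $t$ by the resulting retractions to get the induced diagram. The only (harmless) difference is that you define the idempotent explicitly as $\pi_F=s\pi_1 d$ and verify $d\pi_F=\pi_1 d$ by direct computation, whereas the paper obtains the corresponding $\pi$ from the universal property of the equalizer applied to $\pi_1 d$ and then deduces idempotency from the fact that $d$ is a monomorphism.
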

\begin{proof}
By assumption, there exist objects $H_1'$ and $H'_2$ in $\BB$ (images of
$\pi_1$ and $\pi_2$) and morphisms $\s_i\colon H_i'\ra H_i$,
$\rho_i\colon H_i\ra H_i'$ such that $\rho_i\s_i=\Id_{H_i'}$, $\s_i\rho_i=\pi_i$
($i=1,2$). Morphisms $f_1,f_2$ induce morphisms $f_1'=\rho_2f_1\s_1$ and
$f_2'=\rho_2f_2\s_1$ from $H_1'$ to $H_2'$.
Let
$$\xymatrix{H \ar[rr]^f && H_1 \ar@/^2em/[ll]^s \ar@<1mm>[rr]^{f_1} \ar@<-1mm>[rr]_{f_2} && H_2 \ar@/^2em/[ll]^t}$$
be a contractible equalizer diagram for $(f_1,f_2)$.
Consider the morphism $\pi_1  f\colon H\ra H_1$. Since $f_1 (\pi_1
f)=\pi_2  f_1  f=\pi_2  f_2  f=f_2 ( \pi_1  f)$, by definition of an
equalizer there is a morphism $\pi\colon H\ra H$ such that $f
\pi=\pi_1  f$. This morphism $\pi$ is a projector. Indeed, $f  \pi^2=\pi_1
f \pi=\pi_1^2  f=\pi_1  f=f  \pi$, and since $f$ is mono,
$\pi^2=\pi$. By assumption, there exists an object $H'$ (an image of
$\pi$) and morphisms $\s\colon H'\ra H$, $\rho\colon H\ra H'$ such that
$\rho \s=\Id_{H'}, \s\rho=\pi$. Define $f'\colon H'\ra H_1'$ to be $\rho_1f\s$.

Let $s\colon H_1\ra H$ and $t\colon H_2\ra H_1$ be the morphisms
from the definition of a contractible equalizer. Now let $s'=\rho s\s_1$
and $t'=\rho_1t\s_2$. Easy calculations show that $f'$, $s'$ and $t'$
form a contractible equalizer diagram for the pair $(f_1',f_2')$.
\end{proof}

Here we describe a situation in which comodules over a comonad on a
triangulated category form a triangulated category.

\begin{predl}
\label{prop_cttriang} Let $(P^*,P_*)$ be a pair of adjoint exact
functors between triangulated categories $\BB$ and $\CC$, let $\TT$
be the comonad on $\CC$ associated with this pair. Suppose that $\BB$
is Karoubian complete and that the natural morphism of functors
$\eta\colon \Id_{\BB}\ra P_*P^*$ splits. Then the category $\CC_{\TT}$
is triangulated in the sense of Definition~\ref{quasitriang}.
\end{predl}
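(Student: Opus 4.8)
The plan is to reduce the statement to the fact that the comparison functor $\Phi\colon\BB\ra\CC_{\TT}$ is an equivalence of categories, and then to transport the triangulated structure of $\BB$ across $\Phi$, verifying that the resulting distinguished triangles in $\CC_{\TT}$ coincide with those singled out in Definition~\ref{quasitriang}. That $\Phi$ is an equivalence is immediate: the hypotheses here are exactly those of Corollary~\ref{cor_suffcond}, so the comparison functor is an equivalence.

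Before transporting, I would record a few elementary observations. First, $P^*$ is faithful: if $r\colon P_*P^*\ra\Id_{\BB}$ is a retraction of $\eta$ and $P^*f=0$, then $P_*P^*f=0$, hence $\eta f=0$, hence $f=r\circ\eta f=0$; likewise the forgetful functor $Q^*\colon\CC_{\TT}\ra\CC$ is faithful, and it is conservative because the inverse in $\CC$ of a comodule isomorphism is again a comodule morphism. Next, since $P^*$ and $P_*$ are exact, the adjunction $(P^*,P_*)$ is an adjunction of exact functors, so the unit $\eta$ is compatible with the shift. This shows, on the one hand, that the prescription $(F,h)[1]=(F[1],h[1])$ of Definition~\ref{quasitriang} is a well-defined automorphism of $\CC_{\TT}$ (with inverse $(F,h)\mapsto(F[-1],h[-1])$, using that $T=P^*P_*$ commutes with $[1]$), and on the other hand that $\Phi$ intertwines the shifts, $\Phi(H)[1]\cong\Phi(H[1])$ naturally in $H$. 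Consequently I may transport the triangulated structure of $\BB$ along the shift-compatible equivalence $\Phi$: declare a ``triangle-shaped'' diagram in $\CC_{\TT}$ to be distinguished precisely when it is isomorphic to $\Phi$ of a distinguished triangle of $\BB$. This gives a bona fide triangulated structure on $\CC_{\TT}$, with respect to which $\Phi$ is an exact equivalence; it only remains to identify this class of triangles with the one in Definition~\ref{quasitriang}.

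Since $\Phi$ is an equivalence and $Q^*\Phi\cong P^*$, any triangle-shaped diagram $T$ in $\CC_{\TT}$ is isomorphic to $\Phi(D)$ for a diagram $D$ in $\BB$ of the same shape, with $Q^*(T)\cong P^*(D)$; so the identification of triangle classes reduces to the assertion that a diagram $D$ in $\BB$ is a distinguished triangle if and only if $P^*(D)$ is a distinguished triangle in $\CC$. The ``only if'' direction is just the exactness of $P^*$. For the converse, note first that $D$ is a candidate triangle, since its consecutive composites vanish after applying the faithful functor $P^*$. Now $P_*$ is exact, so $P_*P^*(D)$ is a distinguished triangle of $\BB$, and the split unit $\eta$ — compatible with the structure maps of $D$ and with the shift by the previous paragraph — exhibits $D$ as a direct summand of the candidate triangle $P_*P^*(D)$ via $\eta_D$ and its retraction. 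As $\BB$ is Karoubian complete, a direct summand of a distinguished triangle is itself distinguished, whence $D$ is distinguished. Combining the two directions gives the identification, and the proposition follows. The single nontrivial input, and the point requiring the most care, is this last fact — that in a Karoubian complete triangulated category every direct summand of a distinguished triangle is distinguished — together with the routine but slightly laborious verification that $\Phi$, $P^*$, $P_*$ and $Q^*$ interact correctly with the shift functors.
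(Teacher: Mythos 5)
Your argument is correct and follows essentially the same route as the paper: invoke Corollary~\ref{cor_suffcond} to get that $\Phi$ is an equivalence, reduce to showing that a diagram $D$ in $\BB$ is a distinguished triangle if and only if $P^*(D)$ is one, and settle the nontrivial direction by exhibiting $D$ as a direct summand of the distinguished triangle $P_*P^*(D)$ via the split unit. The paper cites \cite[prop. 1.2.3]{Ne} for the fact that a direct summand of a distinguished triangle is distinguished; your extra bookkeeping about shifts and faithfulness is fine but not a different method.
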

\begin{proof}
By Corollary~\ref{cor_suffcond}, the comparison functor $\Phi\colon
\BB\ra \CC_{\TT}$ is an equivalence. According to comparison
theorem, $P^*\cong Q^*\Phi$, where $Q^*\colon \CC_{\TT}\ra \CC$ is the
forgetful functor. We have to check that the diagram $F'\ra F\ra
F''\ra F'[1]$ is a triangle in $\BB$ if and only if $P^*$ of this
diagram is a triangle in $\CC$. ``Only if'' holds because $P^*$ is
exact. To prove ``if'', suppose $P^*(F'\ra F\ra F''\ra F'[1])$ is a
triangle. Then $P_*P^*(F'\ra F\ra F''\ra F'[1])$ is also a triangle,
and its direct summand $F'\ra F\ra F''\ra F'[1]$ is a triangle too (see~\cite[prop. 1.2.3]{Ne}).
\end{proof}

\section{Two ways of defining descent data}\label{section_cosimpl<>comonad}

Let $\CC_{\bul}$ be an augmented cosimplicial category satisfying assumption A1 of Section~\ref{section_cosimplicial}.
In this context two descent categories are defined.
The first one is the category $\Kern(\Augm(\CC_{\bul}))$, introduced in Section~\ref{section_cosimplicial}. To define it, one needs neither augmentation nor adjoint functors to $P_{\bul}^*$. On the contrary, the definition of the second category is fully based on categories $\CC_{-1}$ and $\CC_0$ and functors between them. This is the category of comodules over the comonad $\TT$ on $\CC_0$,
associated with the adjoint pair $(P^*,P_*)$. We recall the definition
(cf. Definition~\ref{def_comodule}).

\begin{definition}[{Comonad descent category}]
\label{def_dxt}  Objects of  $\CC_{\bul\TT}$ are pairs $(F,h)$ where
$F\in \Ob  \CC_0$ and $h\colon F\ra P^*P_*F$ is a morphism such that the composition $F\xra{h} P^*P_*F \xra{\e F} F$ is identity and the diagram
\begin{equation}
\label{equation_hh=nh}
\xymatrix{
F \ar[rr]^h \ar[d]^h  &&  P^*P_*F \ar[d]^{P^*P_*h} \\
P^*P_*F \ar[rr]^-{P^*\eta P_*F} && P^*P_*P^*P_*F }
\end{equation}
is commutative.
Morphisms from $(F_1,h_1)$ to $(F_2,h_2)$ in the category $\CC_{\bul\TT}$ are
morphisms $f\colon F_1\ra F_2$ in $\CC_0$ such that $h_2\circ
f=P^*P_*f\circ h_1$.
\end{definition}

\begin{predl}
\label{prop_twotypes} Under assumptions A1 and A2 the categories
$\Kern(\Augm(\CC_{\bul}))$ are  $\CC_{\bul\TT}$ equivalent.
\end{predl}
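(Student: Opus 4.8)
The plan is to construct a pair of functors between $\Kern(\Augm(\CC_{\bul}))$ and $\CC_{\bul\TT}$ and check they are mutually quasi-inverse. The key point is that both categories are ``descent data on $F_0\in\CC_0$'': in the kernel description such data is an isomorphism $\theta\colon P_1^*F\ra P_2^*F$ on $\CC_1$ with a cocycle condition on $\CC_2$, while in the comonad description it is a morphism $h\colon F\ra P^*P_*F$ with the coalgebra axioms. The bridge between the two is the explicit formula for $P_*$ from the proof of Proposition~\ref{prop_augmcosimplcat}, namely $P_*F=(P_{2*}P_1^*F,\theta_F)$, which shows that $P^*P_*F\cong D^*P_{2*}P_1^*F$, i.e.\ the comonad $T=P^*P_*$ on $\CC_0$ is computed as pushing a sheaf to $\CC_1$ along $P_1^*$ and pulling it back along the diagonal $D^*$ (using A2, $D^*P_{2*}\cong\Id$ on the nose is not quite what we have, but $P^*P_*=D^*P_{1}^{*}\cdots$; the correct identification comes through the exact Cartesian square). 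Thus giving $h\colon F\ra P^*P_*F$ is the same, by adjunction $(P_1^*,P_{1*})$ applied on $\CC_1$, as giving a morphism $P_1^*F\ra P_2^*F$ on $\CC_1$; one then checks the coalgebra counit axiom corresponds to this morphism restricting to the identity along the diagonal (hence being an isomorphism, with inverse obtained symmetrically), and the coassociativity axiom \eqref{equation_hh=nh} corresponds precisely to the cocycle condition on $\CC_2$.

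First I would fix notation: write $T=P^*P_*$, recall $P_*F=(P_{2*}P_1^*F,\theta_F)$ and hence (applying the forgetful functor $P^*$, which on Definition~\ref{def_dxbar1}-style objects reads off $F_0$) that $TF=D^*P_{2*}P_1^*F$ where $D^*\colon\CC_1\ra\CC_0$ is pullback along the diagonal; more precisely one uses the exact Cartesian square expressing $X\cong X\times_S X\times_{X\times_S X}X$ and A2. Next, define the functor $\Lambda\colon\CC_{\bul\TT}\ra\Kern(\Augm(\CC_{\bul}))$: given $(F,h)$, the morphism $h\colon F\ra D^*P_{2*}P_1^*F$ corresponds under the adjunction $(P^*,P_*)$ — equivalently under $(D^*, D_*)$ and $(P_1^*,P_{1*})$ — to a morphism $\theta=\theta_h\colon P_1^*F\ra P_2^*F$ in $\CC_1$. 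I would verify that the counit axiom ``$\e F\circ h=\Id$'' translates into ``$D^*\theta=\Id_F$'' (via $D^*P_1^*\cong\Id\cong D^*P_2^*$), which forces $\theta$ to be an isomorphism: indeed, by A2 applied to the Cartesian squares degenerating along the diagonal one shows any endomorphism of $P_1^*F$ restricting to $\Id$ on the diagonal and compatible in the relevant sense is invertible — alternatively, the genuinely slick route is to run the whole argument symmetrically with $P_*F=(P_{1*}P_2^*F,\ldots)$ and get a two-sided inverse. Then I would check the coassociativity square \eqref{equation_hh=nh} becomes, after transport along the adjunctions and the base-change isomorphisms on $\CC_2$, exactly the equation $P_{23}^*\theta\circ P_{12}^*\theta=P_{13}^*\theta$, i.e.\ the cocycle condition of Definition~\ref{def_dxbar}.

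Conversely I would define $\Pi\colon\Kern(\Augm(\CC_{\bul}))\ra\CC_{\bul\TT}$ sending $(F,\theta)$ to $(F,h_\theta)$ where $h_\theta$ is the adjoint of $\theta$ (or, more canonically, $\Pi$ is the functor $\Phi$ of the Comparison theorem~\ref{th_comparison} composed with the universal functor of Proposition~\ref{prop_comparisonsimpl}(2) — this is a cleaner way to organize the bookkeeping, since Proposition~\ref{prop_comparisonsimpl} already gives the comparison functor $\CC_{-1}'\ra\Kern(\CC_{\bul})$ for any augmentation, and applying it to the augmentation $\CC_{-1}=\CC_{\bul\TT}$ coming from the comonad yields one direction, while $\Phi$ for the augmentation $\CC_{-1}=\Kern(\Augm(\CC_{\bul}))$ with its $(P^*,P_*)$ from Proposition~\ref{prop_augmcosimplcat} yields the other). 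On morphisms everything is the identity on the underlying $F_0$, so both functors are automatically faithful and the content is: (i) well-definedness of the descent/coalgebra axioms under the correspondence, and (ii) $\Lambda\Pi\cong\Id$, $\Pi\Lambda\cong\Id$, which amount to the triangle identities for the adjunctions $(P^*,P_*)$, $(P_1^*,P_{1*})$ together with the cocycle condition in the definition of a cosimplicial category.

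\textbf{The main obstacle} I anticipate is the careful management of the coherence isomorphisms $\epsilon_{f,g}$ and the base-change isomorphisms of A2: the identifications $TF=P^*P_*F\cong D^*P_{2*}P_1^*F$, and especially the translation of the square \eqref{equation_hh=nh} into the cocycle pentagon/hexagon on $\CC_2$, require inserting several of these isomorphisms in the right places and checking they assemble into a commutative diagram. Conceptually it is routine — everything is forced by naturality and the cosimplicial cocycle axiom — but it is the kind of diagram chase where one must be disciplined about which adjunction unit/counit and which $\epsilon$ is being used at each arrow. A secondary subtlety is confirming that $\theta_h$ is genuinely an isomorphism (not merely a morphism) from the counit axiom alone; the safest remedy, as noted above, is to exploit the left–right symmetry of the diagonal square, producing the inverse directly rather than proving invertibility by an abstract argument.
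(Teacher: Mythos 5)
Your proposal is correct and follows essentially the same route as the paper: identify $h$ and $\theta$ through adjunction and the base-change isomorphism $\Hom(P_1^*F,P_2^*F)\cong\Hom(F,P^*P_*F)$, match the counit axiom with $D^*\theta=\Id$ and the coassociativity square \eqref{equation_hh=nh} with the cocycle condition, and obtain invertibility of $\theta$ by constructing the inverse symmetrically from $P_2^*h$. The one shaky spot — the suggestion that $D^*\theta=\Id$ might force invertibility of $\theta$ by an abstract A2 argument — is precisely the point you flag and repair with the symmetric construction of $\theta'$, which is exactly what the paper does.
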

\begin{proof}
Objects of  $\Kern(\Augm(\CC_{\bul}))$ are pairs $(F,\theta)$, where
$F\in \Ob \CC_0$ and $\theta\colon P_1^*F\ra P_2^*F$ is a morphism
(satisfying some conditions). Likewise, objects of  $\CC_{\bul\TT}$ are
pairs $(F,h)$, where $F\in \Ob  \CC_0$ and $h\colon F\ra P^*P_*F$ is
a morphism (satisfying some other conditions). For $F\in \CC_0$ by
adjunction and base change we have
$$\Hom(P_1^*F,P_2^*F)=\Hom(F,P_{1*}P_2^*F)=\Hom(F,P^*P_*F).$$
This allows to associate an $h$ with any $\theta$ and vice versa.
Since the above isomorphisms are functorial, a map $F_1\ra F_2$ is compatible with $\theta$'s $\colon
P_1^*F_i\ra P_2^*F_i$ iff it is compatible with $h$'s: $F_i\ra
P^*P_* F_i$. All we have to do is to check that the conditions on $h$
from the definition of a comodule
\begin{itemize}
\item[(C1)] the composition
$F\xra{h} P^*P_*F \xra{\e F} F$ is identity,
\item[(C2)] the diagram~(\ref{equation_hh=nh})  commutes,
\end{itemize}
are equivalent to the following conditions:
\begin{itemize} \item[(C1')] $\theta$ is an isomorphism,
\item[(C2')] the cocycle condition on $\theta$ holds: morphisms
$P_{13}^*\theta$ and $P_{23}^*\theta\circ P_{12}^*\theta$ from
$P_{13}^*P_1^*F$ to  $P_{23}^*P_2^*F$ are equal.
\end{itemize}

First we show that (C2) is equivalent to (C2'). One has
\begin{align*}
\Hom(P_{13}^*P_1^*F,P_{23}^*P_2^*F)&=
\Hom(F,P_{1*}P_{13*}P_{23}^*P_2^*F)&&\text{by adjunction}\\
&=\Hom(F,P_{1*}P_{12*}P_{23}^*P_2^*F)&&\text{by cosimplicial relations}\\
&=\Hom(F,P_{1*}P_2^*P_{1*}P_2^*F)&&\text{by base change}\\ &=\Hom(F,P^*P_*P^*P_*F)&&\text{by base change}.
\end{align*}
Under these identification
the morphism $P_{13}^*\theta\in
\Hom(P_{12}^*P_1^*F,P_{23}^*P_2^*F)$ corresponds to
the morphism
$P^*\eta P_* F\circ h\in\Hom(F,P^*P_*P^*P_*F)$ and
the morphism
$P_{23}^*\theta\circ P_{12}^*\theta\in
\Hom(P_{12}^*P_1^*F,P_{23}^*P_2^*F)$ corresponds to
the morphism $P^*P_*h\circ
h\in\Hom(F,P^*P_*P^*P_*F)$.
Thus (C2) is equivalent to (C2').

Now we prove that (C1')+(C2') imply (C1). First note that the
morphism $f\colon F\xra{h}P^*P_*F\xra{\e F} F$ equals the pull-back $D^*\theta\colon
F=D^*P_1^*F\ra D^*P_2^*F=F$.  Since $\theta$ is
an isomorphism, $f$ is also an isomorphism. Further, the cocycle
condition for $\theta$ implies that $f^2=f$.  Thus $f=\Id_F$.

It remains to check that (C1)+(C2) imply (C1'). Recall that $\theta$
is obtained from $h$ by adjunction in the following way:
$$\theta\colon P_1^*F\xra{P_1^*h} P_1^*P^*P_*F\xra{\sim} P_2^*P^*P_*F\xra{P_2^*\e} P_2^*F$$
One can check that the map
$$\theta'\colon P_2^*F\xra{P_2^*h} P_2^*P^*P_*F \xra{\sim} P_1^*P^*P_*F\xra{P_1^*\e} P_1^*F$$
is inverse to $\theta$, therefore $\theta$ is an isomorphism.
\end{proof}

Proposition~\ref{prop_twotypes} says that the category $\CC_{\bul\TT}$ does not depend on augmentation and depends only on cosimplicial part
$[\CC_0,\CC_1,\ldots,P_f^*]$. In fact, the comonad $\TT$ also does not depend on augmentation.

\begin{corollary}
\label{cor_simpl>comonad}
For any cosimplicial category $\CC_{\bul}=[\CC_{0}, \CC_1, \ldots,
P_{\bul}^*]$ satisfying assumptions A1 and A2 there is a well-defined comonad
$\TT$ on the category $\CC_0$. It is isomorphic to the comonad constructed in
Definition~\ref{def_dxt} for any extension of $\CC_{\bul}$  to an augmented  cosimplicial category satisfying A1 and A2.
\end{corollary}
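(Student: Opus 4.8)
The plan is to obtain $\TT$ from a \emph{canonical} augmentation of $\CC_{\bul}$ and then to verify that every augmentation satisfying A1 and A2 produces a comonad isomorphic to it. For the first step I would invoke Proposition~\ref{prop_augmcosimplcat}: adjoining $\CC_{-1}:=\Kern(\CC_{\bul})$ to $\CC_{\bul}$ gives an augmented cosimplicial category $\~{\CC}_{\bul}$ that again satisfies A1 and A2, so the forgetful functor $P^*\colon\Kern(\CC_{\bul})\ra\CC_0$ has a right adjoint $P_*$, with unit $\eta$ and counit $\e$ written out explicitly in the proof of that proposition. Applying Example~\ref{mainexample} (equivalently Definition~\ref{def_dxt}) to this adjoint pair yields a comonad $\TT=(P^*P_*,\e,P^*\eta P_*)$ on $\CC_0$. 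Since $\Kern(\CC_{\bul})$, the functors $P^*$ and $P_*$, and the transformations $\eta,\e$ are all manufactured out of the data of $\CC_{\bul}$ alone---the formula $P_*F=(P_{2*}P_1^*F,\theta_F)$ and the explicit $\eta,\e$ only use the structure functors of $\CC_{\bul}$ together with the adjunctions granted by A1---this $\TT$ depends on nothing beyond $\CC_{\bul}$, which settles the existence part.

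For the second step, let $\~{\CC}'_{\bul}=[\CC'_{-1},\CC_0,\CC_1,\ldots,P'^*_{\bul}]$ be an arbitrary extension of $\CC_{\bul}$ to an augmented cosimplicial category satisfying A1 and A2, with adjoint pair $(P'^*,P'_*)$, unit $\eta'$, counit $\e'$, and associated comonad $\TT'=(P'^*P'_*,\e',P'^*\eta'P'_*)$ on $\CC_0$. By Proposition~\ref{prop_comparisonsimpl}(2) there is a comparison functor $\Phi\colon\CC'_{-1}\ra\Kern(\CC_{\bul})$ which, together with identity functors on $\CC_0,\CC_1,\ldots$, is a morphism of augmented cosimplicial categories; explicitly, $P^*_f\,\Phi\cong P'^*_f$ compatibly with the structure isomorphisms for every $f\colon\emptyset\ra[1,\ldots,n+1]$, so in particular there is a natural isomorphism $\alpha\colon P^*\Phi\xra{\sim}P'^*$. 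The heart of the argument is the claim that $\Phi$ transports the adjunction $P'^*\dashv P'_*$ to the adjunction $P^*\dashv P_*$: the natural transformation $\beta\colon\Phi P'_*\ra P_*$ obtained from $\alpha$ by adjunction (the mate of $\alpha$, built from $\eta$ and $\e'$) is an isomorphism and intertwines $\eta'$ with $\eta$ and $\e'$ with $\e$. Granting this, $P'^*P'_*\cong P^*\Phi P'_*\cong P^*P_*$ compatibly with the counits, the transformation $P'^*\eta'P'_*$ goes over to $P^*\eta P_*$, and hence $\TT'\cong\TT$ as comonads. Taking $\~{\CC}'_{\bul}=\~{\CC}_{\bul}$ identifies $\TT$ with the comonad of its own canonical augmentation, and for general $\~{\CC}'_{\bul}$ this identifies the comonad of Definition~\ref{def_dxt} with $\TT$, which is exactly the assertion of the corollary.

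The one substantial point, and the step I expect to be the main obstacle (a merely technical one), is the claim of the previous paragraph; note that $\Phi$ need not be an equivalence, so $\beta$ is not automatically invertible and assumption A2 is genuinely needed. On underlying functors the claim reduces to A2: both $P^*P_*=P_{2*}P_1^*$ and $P'^*P'_*$ become identified with $P_{2*}P_1^*$ through the base-change isomorphisms attached to the exact Cartesian square $\emptyset\ra[1]\rightrightarrows[1,2]$ used at the end of the proof of Proposition~\ref{prop_augmcosimplcat}, and $\Phi$ respects these identifications because it commutes with all structure functors and with the forgetful functors down to $\CC_0$. What remains is then a coherence check: one unwinds the explicit descriptions of $P_*$, $\eta$ and $\e$ from the proof of Proposition~\ref{prop_augmcosimplcat} and of $\Phi$ from the proof of Proposition~\ref{prop_comparisonsimpl}, and verifies that the structure isomorphisms $\epsilon_{f,g}$, the base-change isomorphisms of A2, and the adjunction units and counits all fit together so that $\beta$ is an isomorphism of comonads. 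This is pure bookkeeping; alternatively one may dispense with $\Phi$ altogether and check directly that, after transport along $P^*P_*\cong P_{2*}P_1^*$, every comonad $\TT'$ arising in this way is given by one and the same formula in the structure functors of $\CC_{\bul}$ and the adjunctions of A1 --- which is the same computation in different packaging.
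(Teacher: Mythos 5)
Your proposal is correct and, once the detour through the comparison functor $\Phi$ and its mate $\beta$ is unwound, it rests on exactly the computation the paper performs: using the base-change isomorphisms of A2 to identify $T=P^*P_*$ with $P_{2*}P_1^*$ and to write $\e$ and $\delta$ by formulas involving only the structure functors of $\CC_{\bul}$ (via $D_*D^*$ and $P_{13}$, $P_{12}$, $P_{23}$), hence independent of the augmentation. The ``alternative'' you offer in your last sentence --- dispensing with $\Phi$ and checking directly that every such comonad is given by one and the same formula in the data of $\CC_{\bul}$ --- is precisely the paper's proof.
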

\begin{proof}
Let $\~{\CC}_{\bul}$ be an extension of $\CC_{\bul}$ to an augmented cosimplicial category, satisfying A1 and A2. (For example, one can take the category, constructed in Proposition~\ref{prop_augmcosimplcat}, as $\~{\CC}_{\bul}$).
Apply Definition~\ref{def_dxt} to $\~{\CC_{\bul}}$ to construct a comonad.
We see that the functor
$T=P^*P_*=P_{2*}P_1^*$ does not depend on augmentation.
It is not hard to check that the natural transformations of functors
$T=P^*P_*\ra \Id$ and $T=P^*P_*\ra P^*P_*P^*P_*=TT$ have the form
$$P_{2*}P_1^*\xra{\eta} P_{2*}D_*D^*P_1^*\xra{\sim}\Id\circ \Id=\Id$$
and
$$P_{2*}P_1^*\xra{\eta} P_{2*}P_{13*}P_{13}^*P_1^* \xra{\sim}
P_{2*}P_{23*}P_{12}^*P_1^*\xra{\sim}
 P_{2*}P_1^*P_{2*}P_1^*,$$
and hence do not depend on augmentation.
\end{proof}

\section{Restriction to subcategories}
\label{s5}

In this section we collect some facts concerning
subcategories in descent categories.

Suppose there is a cosimplicial subcategory
$$\CC'_{\bullet}=[ \CC'_{0}, \CC'_1, \ldots,
P_{\bullet}^*]\qquad\text{in}\qquad [\CC_{0}, \CC_1, \ldots,
P_{\bullet}^*].$$ This means that there is a collection of subcategories $\CC'_i$ in each
$\CC_i$ compatible with the pullbacks $P_{\bullet}^*$, i.e.
$P_{\bullet}^*\CC'_m\subset\CC'_n$ (and not necessarily compatible
with the pushforwards $P_{\bullet *}$). In this context one can consider the classical descent category
$\Kern(\CC'_{\bullet})$, it is a subcategory of $\Kern(\CC_{\bullet})$.

\begin{remark}
\label{remark_kern'}
Note that the category $\Kern(\CC'_{\bullet})$ in fact does not
depend on $\CC'_1, \CC'_2, \ldots$. It can be defined  as soon as a
subcategory $\CC'_0\subset \CC_0$ is given: one can take
$\CC'_k=\CC_k$ for $k>0$.
\end{remark}

Suppose $\TT$ is a comonad on the category $\CC$ and
$\CC'\subset\CC$ is a subcategory. Define the category $\CC'_{\TT}$
as follows.
\begin{definition}
\label{def_dxt'} Objects of $\CC'_{\TT}$ are  pairs $(F,h)$ in $\Ob
\CC_{\TT}$ such that $F\in \Ob  \CC'$, and morphisms in $\CC'_{\TT}$ are
morphisms in $\CC_{\TT}$ that lie in $\CC'$:
$$\Hom_{\CC'_{\TT}}((F_1,h_1),(F_2,h_2))=
\Hom_{\CC_{\TT}}((F_1,h_1),(F_2,h_2))\cap \Hom_{\CC'}(F_1,F_2).$$
Evidently, $\CC'_{\TT}$ is a subcategory of $\CC_{\TT}$, it is full
if $\CC'$ is a full subcategory of $\CC$.
\end{definition}
The motivation for this definition is the following: in many important
cases the functor $T\colon \CC\ra \CC$ is rather ``big'' and does not
preserve ``small'' subcategories in $\CC$. Therefore,
Definition~\ref{def_comodule} does not allow to construct a category of
``small'' objects equipped with descent data. The typical example here is when
$p\colon X\ra S$ is a non-proper morphism of schemes, $\CC=\qcoh(X),
\CC'=\coh(X)$ and $T=p^*p_*$.

A straightforward analog of Proposition~\ref{prop_adab} holds:
\begin{predl}
\label{prop_adabtr} Suppose $\TT=(T,\e,\delta)$ is a comonad on a
category $\CC$.

If $\CC'\subset\CC$ are additive categories and $T$ is an additive
functor then $\CC'_{\TT}$ is also additive.

If $\CC'\subset\CC$ are abelian and $T$ is left exact then
$\CC'_{\TT}$ is abelian too.

If $\CC'\subset\CC$ are triangulated categories, the functor $T$ is
exact and $\CC_{\TT}$ is triangulated in the sense of
Definition~\ref{quasitriang}, then $\CC'_{\TT}\subset \CC_{\TT}$ is
a triangulated subcategory.
\end{predl}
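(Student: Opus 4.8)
The statement has three parts, one for each level of structure (additive, abelian, triangulated); I would treat them in that order, since the additive case underlies the other two.

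First, for the additive claim, I would observe that the forgetful functor $Q^*\colon\CC_{\TT}\ra\CC$ from Definition~\ref{def_comodule} reflects biproducts in the following sense: given comodules $(F_1,h_1),(F_2,h_2)$, the object $(F_1\oplus F_2, h_1\oplus h_2)$ (using that $T$ is additive, so $T(F_1\oplus F_2)=TF_1\oplus TF_2$) is a comodule, and it is the biproduct in $\CC_{\TT}$. Restricting to $\CC'$: if $F_1,F_2\in\CC'$ and $\CC'$ is closed under biproducts in $\CC$, then $F_1\oplus F_2\in\CC'$, so $(F_1\oplus F_2,h_1\oplus h_2)\in\CC'_{\TT}$, and one checks directly from Definition~\ref{def_dxt'} that it remains the biproduct there. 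The zero object of $\CC'_{\TT}$ is $(0,0)$. Hence $\CC'_{\TT}$ is additive. (This also covers $\CC_{\TT}$ itself as the special case $\CC'=\CC$, which is the content of Proposition~\ref{prop_adab}.)

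Second, for the abelian claim, I would use that $Q^*\colon\CC_{\TT}\ra\CC$ creates kernels and cokernels when $T$ is left exact. Concretely, given a morphism $f\colon(F_1,h_1)\ra(F_2,h_2)$ in $\CC'_{\TT}$, form $K=\ker(f)$ and $C=\operatorname{coker}(f)$ in $\CC$; since $\CC'$ is an abelian subcategory these lie in $\CC'$. Left exactness of $T$ gives that $TK\ra TF_1\ra TF_2$ is again a kernel sequence, which lets one induce a unique comodule structure $h_K\colon K\ra TK$ making $K\ra F_1$ a comodule morphism, and dually (using that $T$ applied to the cokernel still receives the relevant map) a structure on $C$. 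One then verifies these are the kernel and cokernel in $\CC'_{\TT}$, and that the canonical map $\operatorname{coim}\ra\operatorname{im}$ is an isomorphism because it is so after applying the faithful exact-on-the-relevant-diagrams functor $Q^*$. The main point requiring care here is checking that the induced $h_K$ and $h_C$ satisfy the two comodule axioms of Definition~\ref{def_comodule}; this follows by a diagram chase using the universal property of the kernel/cokernel together with left exactness of $T$ and $T^2$.

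Third, for the triangulated claim, I would argue as follows. By hypothesis $\CC_{\TT}$ is triangulated in the sense of Definition~\ref{quasitriang}, with shift $(F,h)[1]=(F[1],h[1])$ and triangles those diagrams mapping to triangles in $\CC$ under $Q^*$. The subcategory $\CC'_{\TT}$ is a full additive subcategory (by the first part, since $\CC'$ is full in $\CC$), it is stable under the shift because $\CC'\subset\CC$ is stable under shift, and it is stable under cones: if $(F_1,h_1)\ra(F_2,h_2)$ is a morphism in $\CC'_{\TT}$ with cone $(F_3,h_3)$ in $\CC_{\TT}$, then $F_3\cong C(F_1\ra F_2)$ lies in $\CC'$ because $\CC'$ is a triangulated subcategory of $\CC$; hence $(F_3,h_3)\in\CC'_{\TT}$. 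A full additive subcategory stable under shifts and cones, with the induced triangles, is a triangulated subcategory, so $\CC'_{\TT}\subset\CC_{\TT}$ is triangulated.

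The only genuinely delicate step is the abelian case — specifically, producing the comodule structure on kernels and cokernels and verifying the coalgebra axioms — and it is exactly there that the hypothesis "$T$ left exact'' is used; everything else is a routine transfer of structure along the forgetful functor, constrained to $\CC'$ via Definition~\ref{def_dxt'}.
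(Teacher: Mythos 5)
Your proof is correct; the paper itself offers no argument for this proposition, dismissing it as a ``straightforward analog'' of Proposition~\ref{prop_adab} (which is likewise left unproved), and your verification --- transferring biproducts, kernels/cokernels, and cones along the forgetful functor, with left exactness of $T$ used exactly where you say, to equip kernels with a comodule structure --- is the standard argument the authors clearly have in mind. The only implicit convention worth noting is that ``$\CC'\subset\CC$ are abelian'' must be read as an exact (and, for the triangulated case, strictly full) subcategory so that kernels, cokernels and cones taken in $\CC$ of morphisms in $\CC'$ again land in $\CC'$, which is how you use it and how the paper uses it in its applications.
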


Let  $\CC_{\bul}$ be an augmented cosimplicial category satisfying assumptions A1 and A2 from Section~\ref{section_cosimplicial}, let $\CC'_{\bul}\subset \CC_{\bul}$ be a cosimplicial subcategory (possibly, without augmentation). In this context two descent categories related with the subcategory  $\CC'_{\bul}$ are defined -- the category $\Kern(\CC'_{\bul})$ and the category $\CC'_{\bul\TT}=\CC'_{0\TT}$ defined above.
An easy corollary of
Proposition~\ref{prop_twotypes} holds:
\begin{corollary}
\label{cor_twotypes} In the above notation the categories
$\Kern(\CC'_{\bullet})$ and $\CC'_{\bul\TT}$ are equivalent.
\end{corollary}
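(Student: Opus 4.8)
The plan is to reduce Corollary~\ref{cor_twotypes} to Proposition~\ref{prop_twotypes} rather than re-prove everything. By Proposition~\ref{prop_twotypes} there is an equivalence $\Kern(\Augm(\CC_{\bul}))\simeq\CC_{\bul\TT}$, and by Remark~\ref{remark_kern'} the category $\Kern(\CC'_{\bul})$ depends only on the subcategory $\CC'_0\subset\CC_0$, not on the higher $\CC'_i$; similarly $\CC'_{\bul\TT}=\CC'_{0\TT}$ is by definition cut out of $\CC_{\bul\TT}$ using only the condition that the underlying object lies in $\CC'_0$. So both sides are the full (or non-full, as the case may be) subcategory of the respective descent category determined by the single subcategory $\CC'_0\subset\CC_0$.

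The key step is therefore to observe that the equivalence $\Psi\colon\Kern(\Augm(\CC_{\bul}))\xra{\sim}\CC_{\bul\TT}$ constructed in the proof of Proposition~\ref{prop_twotypes} commutes with the forgetful functors to $\CC_0$: on objects it sends $(F,\theta)$ to $(F,h)$ with the \emph{same} $F\in\CC_0$ (only the descent datum $\theta$ is transported to $h$ via the chain of adjunction and base-change isomorphisms $\Hom(P_1^*F,P_2^*F)\cong\Hom(F,P^*P_*F)$), and on morphisms it is literally the identity on the underlying map $F_1\ra F_2$ in $\CC_0$. Hence $\Psi$ restricts to an equivalence between the preimages of $\CC'_0$ under these forgetful functors, which are exactly $\Kern(\CC'_{\bul})$ and $\CC'_{0\TT}=\CC'_{\bul\TT}$. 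One should spell out that a morphism in $\Kern(\CC'_{\bul})$ is precisely a morphism of $\Kern(\Augm(\CC_{\bul}))$ whose underlying $\CC_0$-map lies in $\Hom_{\CC'_0}(F_1,F_2)$, and likewise for $\CC'_{\bul\TT}$ by Definition~\ref{def_dxt'}, so $\Psi$ and its quasi-inverse preserve these Hom-subsets in both directions.

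I expect no genuine obstacle here; the only thing to be careful about is the bookkeeping of what "subcategory" means (full versus merely faithful) and making sure the restricted functor is still essentially surjective onto $\CC'_{\bul\TT}$ — this is immediate because any object $(F,h)\in\CC'_{\bul\TT}$ has $F\in\CC'_0$, so its preimage $(F,\theta)$ under $\Psi$ already lies in $\Kern(\CC'_{\bul})$ by Remark~\ref{remark_kern'}. Thus the proof is a one-paragraph verification that the equivalence of Proposition~\ref{prop_twotypes} is compatible with the forgetful functors and hence restricts to the subcategories determined by $\CC'_0$.
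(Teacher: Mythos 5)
Your proof is correct and matches the paper's (implicit) argument: the paper states this as an ``easy corollary'' of Proposition~\ref{prop_twotypes} with no written proof, and the expected justification is exactly your observation that the equivalence of Proposition~\ref{prop_twotypes} is the identity on underlying objects and morphisms of $\CC_0$, hence restricts to the subcategories cut out by $\CC'_0$ (using Remark~\ref{remark_kern'} and Definition~\ref{def_dxt'}). No issues.
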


Let  $(P^*,P_*)$ be an adjoint pair of functors between $\BB$ and
$\CC$ and $\TT$ be the associated comonad on $\CC$. If $P^*\colon \BB\to \CC$ takes a subcategory $\BB'\subset \BB$ into $\CC'\subset \CC$, one can
consider the restriction of the comparison functor
$$\Phi\mid_{\BB'}\colon \BB'\ra\CC'_{\TT}.$$
In particular, one can take $\BB'$ to be the preimage $(P^*)^{-1}(\CC')$:
it is the subcategory in $\BB$ whose objects/morphisms are exactly the
objects/morphisms of $\BB$ that are sent by $P^*$ into objects/morphisms of $\CC'$.
\begin{lemma}
\label{lemma_restrictcomparison}
 If the comparison functor $\Phi\colon
\BB\ra \CC_{\TT}$ is an equivalence and $\CC'\subset\CC$ is a
subcategory, then the restriction of $\Phi$ on
$\BB'=(P^*)^{-1}(\CC')$ is an equivalence $ \BB'\ra \CC'_{\TT}$.
\end{lemma}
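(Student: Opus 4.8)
The plan is to deduce the statement directly from Beck's theorem (Theorem~\ref{th_beck}), using the fact that $\Phi$ is already known to be an equivalence on all of $\BB$. First I would observe that $\BB'=(P^*)^{-1}(\CC')$ is, by construction, exactly the preimage of $\CC'_{\TT}$ under $\Phi$: an object $H\in\BB$ lies in $\BB'$ iff $P^*H\in\CC'$, and since $Q^*\Phi\cong P^*$ this happens iff $\Phi(H)=(P^*H,h)$ has underlying object in $\CC'$, i.e. iff $\Phi(H)\in\Ob\CC'_{\TT}$; likewise for morphisms. So $\Phi$ restricts to a functor $\BB'\to\CC'_{\TT}$, and it suffices to check this restriction is fully faithful and essentially surjective.

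For \emph{full faithfulness}, note that $\BB'\subset\BB$ and $\CC'_{\TT}\subset\CC_{\TT}$ are defined so that $\Hom_{\BB'}(H_1,H_2)$ consists of those $f\in\Hom_{\BB}(H_1,H_2)$ with $P^*f$ a morphism of $\CC'$, and similarly $\Hom_{\CC'_{\TT}}(\Phi H_1,\Phi H_2)=\Hom_{\CC_{\TT}}(\Phi H_1,\Phi H_2)\cap\Hom_{\CC'}(P^*H_1,P^*H_2)$. Since $\Phi$ is fully faithful on $\BB$ and $Q^*\Phi f=P^*f$ (with $Q^*$ the forgetful functor, which is faithful), $\Phi$ identifies $\Hom_{\BB}(H_1,H_2)$ with $\Hom_{\CC_{\TT}}(\Phi H_1,\Phi H_2)$ in a way compatible with the respective intersections with $\Hom_{\CC'}$; hence it restricts to a bijection on the subsets, giving full faithfulness on $\BB'$. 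This step is essentially bookkeeping.

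For \emph{essential surjectivity}, take $(F,h)\in\Ob\CC'_{\TT}$, so in particular $(F,h)\in\Ob\CC_{\TT}$. Since $\Phi\colon\BB\to\CC_{\TT}$ is an equivalence, there is $H\in\BB$ and an isomorphism $\Phi(H)\cong(F,h)$ in $\CC_{\TT}$. Applying $Q^*$ gives an isomorphism $P^*H\cong F$ in $\CC$; as $F\in\Ob\CC'$, this exhibits $P^*H$ as isomorphic to an object of $\CC'$. Here I would need $\CC'$ to be \emph{closed under isomorphisms in $\CC$} (equivalently, $\CC'$ is full on isomorphisms, or just strictly full) — this is the only point where a mild hypothesis on $\CC'$ is used, and it is implicit in all the intended applications (subcategories like $\coh\subset\qcoh$, $\D^b\subset\D$, $\perf\subset\D$). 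Granting this, $P^*H\in\Ob\CC'$, so $H\in\Ob\BB'$ and $\Phi(H)\cong(F,h)$ in $\CC'_{\TT}$ (the isomorphism lies in $\CC'$ because its components do). Thus $\Phi|_{\BB'}$ is essentially surjective, completing the proof.

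The main obstacle, such as it is, is purely definitional: making sure the notion of ``subcategory'' in play is robust enough (closed under isomorphism) for the essential-surjectivity argument to go through, and keeping the two layers of ``subcategory'' bookkeeping — $\BB'\subset\BB$ versus $\CC'_{\TT}\subset\CC_{\TT}$ — aligned via the identity $Q^*\Phi\cong P^*$. No new use of Beck's theorem or of assumptions A1, A2 is needed beyond what is already packaged into the hypothesis that $\Phi$ is an equivalence.
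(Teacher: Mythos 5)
Your proof is correct and is exactly the argument the paper has in mind — its own proof of this lemma is literally the single word ``Obvious,'' and your write-up simply spells out that bookkeeping. Your caveat that $\CC'$ should be closed under isomorphism for essential surjectivity is apt and consistent with the paper's usage, since in every application the lemma is invoked for strictly full subcategories such as $\coh\subset\qcoh$, $\D^b_{\coh}\subset\D$ and $\D^{\perf}\subset\D$.
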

\begin{proof}
Obvious.
\end{proof}

\section{Coherent sheaves on schemes and stacks and their derived categories}
\label{s6}
When coherent and quasi-coherent sheaves on a scheme are considered, the scheme is usually supposed to be quasi-compact and quasi-separated. Recall (see.~\cite[1.1,1.2]{EGA}) that a scheme is quasi-compact if it can be covered by a finite number of open affine subschemes, a scheme is quasi-separated if the intersection of any two its open affine subschemes can be covered by a finite number of open affine subschemes. For instance, any Noetherian (in particular, any quasi-projective)
scheme is quasi-compact and quasi-separated. From now on all schemes
are supposed to be quasi-compact and quasi-separated.

In this section we give a review of coherent and quasi-coherent sheaves on stacks and of derived categories of sheaves on stacks. More detailed exposition of the subject can be found in papers by Laumon and
Moret-Bailly~\cite{La}, Laszlo and Olsson~\cite{LaOl} and Arinkin
and Bezrukavnikov~\cite{AB}. All stacks in this paper are supposed to be algebraic stacks of finite type over a field. In particular, any stack $X$ is assumed to be Noetherian,  quasi-compact and quasi-separated, it can be covered by a scheme of finite type over a field. By sheaves on $X$ we understand sheaves of $\O_X$-modules in smooth topology (see~\cite[6.1]{La}).

For all the paper we accept the following
$$
\parbox{0.9\textwidth}{{\bf Assumption.}
Any stack is supposed to be either an algebraic stack of finite type over an arbitrary field~$\k$ or a quasi-compact quasi-separated scheme.} \leqno{(\dagger)}
$$

Suppose $X$ is a stack.  The category $\O_X\Mod$ of sheaves of
$\O_X$-modules on $X$ is a Grothendieck category, it has enough injective objects (see~\cite[chapter 18]{KSh}).  The unbounded derived category of an abelian category
$\O_X\Mod$ is denoted by $\D(\O_X\Mod)$, it has arbitrary direct sums. Categories of quasi-coherent and coherent sheaves of $\O_X$-modules are denoted by
$\qcoh(X)$ and $\coh(X)$ respectively. Consider $\D_{\qcoh}(X)$, a full subcategory in $\D(\O_X\Mod)$, formed by complexes with quasi-coherent cohomology sheaves. This category is also closed under taking direct sums.
Denote by  $\D_{\coh}^b(X)\subset
\D_{\qcoh}(X)$ a full subcategory, formed by complexes, whose cohomology sheaves are coherent and almost all equal to zero.
Other versions of derived categories are defined similarly.
By~\cite[claim 2.5]{AB}, one can use the smooth-\'etale topology on $X$
instead of the smooth, and in the case when $X$ is a scheme -- the Zariski topology, this will not affect the (quasi)coherent categories
$\qcoh(X)$, $\coh(X)$, $\D_{\qcoh}(X)$, etc.

Suppose that stacks $X$ and $Y$ satisfy Assumption~$(\dagger)$, and
$f\colon X\ra Y$ is a morphism. This gives a  morphism of ringed smooth sites $(X,\O_X)\ra (Y,\O_Y)$ and functors
$f^*\colon \O_Y\Mod\ra\O_X\Mod$ and $f_*\colon
\O_X\Mod\ra\O_Y\Mod$, the functor $f^*$ is left adjoint to $f_*$. The functor $f^*$
preserves quasi-coherence  (\cite[6.8]{La}) and coherence. Further, if $f$ is quasi-compact and quasi-separated,
then $f_*$ preserves quasi-coherent sheaves, while this  is not true for coherent sheaves in general.

Following Spaltenstein~\cite{Sp}, one can define derived functors of direct and inverse image on unbounded derived categories.
By~\cite[prop. 2.1.4]{LaOl}, any complex $F$ in
$\D_{\qcoh}(X)$ has a K-injective resolution. Applying the functor $f_*$ termwise to this resolution, we obtain an object of the category $\D(\O_Y\Mod)$. This defines a functor
$Rf_*\colon \D_{\qcoh}(X)\ra\D(\O_Y\Mod)$. If the morphism $f$
is quasi-compact and quasi-separated and  $F\in \D^+_{\qcoh}(X)$ then
\cite[6.8]{La} implies that $Rf_*F\in\D^+_{\qcoh}(X)$. If, in addition, the morphism $f$ is representable then $Rf_*$ has a finite cohomological dimension.
Then it follows from~\cite[lemma 2.1.10]{LaOl} that the functor
$Rf_*$ sends $\D_{\qcoh}(X)$ into $\D_{\qcoh}(Y)$.

The derived pull-back functor $Lf^*\colon \D(\O_Y\Mod)\ra\D(\O_X\Mod)$ on unbounded derived categories is defined using K-flat resolutions, see~\cite[section 18.6]{KSh}. It sends $\D_{\qcoh}(Y)$ into $\D_{\qcoh}(X)$ and $\D^-_{\coh}(Y)$ into $\D^-_{\coh}(X)$ (see~\cite[6.8, 8.7]{La}).  Derived push-forward and pull-back functors on unbounded derived categories satisfy all expected properties (see~\cite[section 18.6]{KSh}):
they are adjoint to each other,  $R(fg)_*\cong Rf_*Rg_*$ and analogously
for pull-back functors, the projection formula and the base change formula hold as well. For representable morphisms these functors commute with arbitrary direct sums.

If a stack $X$ is quasi-compact and semi-separated (i.e. the diagonal morphism $X\ra X\times X$ is affine) then the category $\D^+_{\qcoh}(X)$
is equivalent to $\D^+(\qcoh(X))$, the bounded below derived category of an abelian category $\qcoh(X)$. Likewise,
$\D_{\coh}^b(X)$ is equivalent to $\D^b(\coh(X))$, see~\cite[claim 2.7, cor. 2.11]{AB}.
If, in addition,  $X$ is a scheme then there is also an equivalence of the unbounded derived categories: $\D_{\qcoh}(X)\cong \D(\qcoh(X))$,
see~\cite[cor. 5.5]{BN}. In this paper we will mostly deal with the ``big'' category $\D_{\qcoh}(X)$ and, in the Noetherian case, with
the ``small'' category $\D_{\coh}^b(X)$. We introduce the following notation:
$$\D(X)=\D_{\qcoh}(X)
.$$

A \emph{perfect complex} on a stack or a scheme $X$ is a complex,
locally quasi-isomorphic to a bounded complex of vector bundles,
i.e. such $F\in \D(\O_X\Mod)$ that for a certain covering $f\colon U\ra X$
by a scheme the pull-back $f^*F$
is quasi-isomorphic to a bounded complex of locally free sheaves of finite rank.
A full subcategory in  $\D(\O_X\Mod)$ formed by perfect complexes is denoted by
$\D^{\perf}(X)$. Obviously,
$\D^{\perf}(X)\subset \D_{\qcoh}(X)$, and for a Noetherian stack
$\D^{\perf}(X)\subset \D^b_{\coh}(X)$. Note that the pull-back functor
$Lf^*\colon \D(\O_Y\Mod)\ra\D(\O_X\Mod)$ sends perfect complexes into perfect complexes.

If $X$ is a scheme, by~\cite[th. 3.1.1]{BvdB}, the category of perfect complexes on $X$ is exactly the category of compact objects in $\D_{\qcoh}(X)$. It generates the category $\D_{\qcoh}(X)$. For a smooth scheme over a field
the category of perfect complexes coincides with the bounded derived category of coherent sheaves.

Recall that a morphism of stacks is \emph{strictly flat},
if it is flat and surjective. Equivalently, one can say that a flat morphism $f\colon X\ra S$ is strictly flat if and only if for any sheaf
$H$ on $S$ such that $f^*H=0$ one has $H=0$. Also it is true that $f$ is strictly flat if and only if $f$ is flat and the pull-back functor $f^*$ is conservative.

\section{Derived descent theory for stacks}
\label{s7} In this section we apply the results of Sections~\ref{section_comonad} and~\ref{section_cosimpl<>comonad} to studying cohomological descent for derived categories of sheaves on schemes and stacks. We
work in the category of stacks in order to treat the following two cases
simultaneously: the descent for morphisms of schemes and the descent for
equivariant derived categories.

Recall that we work under Assumption~$(\dagger)$: all stacks are either algebraic stacks of finite type over a field or quasi-compact quasi-separated schemes.
Any time we deal with coherent sheaves on a stack, the stack is supposed to be Noetherian.

Let $X$ and $S$ be stacks and $p\colon X\ra S$ be a flat representable morphism. Consider the following augmented simplicial stack
$$(X\ra S)_{\bul}=[S,X,X\times_SX,X\times_SX\times_SX,\ldots,p_{\bul}].$$
All fibred products $X\times_SX,X\times_SX\times_SX,\ldots$ satisfy Assumption~$(\dagger)$.
The abelian categories of quasi-coherent sheaves on $S$, $X$,
$X\times_SX\ldots$ and the pull-back functors between them form an augmented cosimplicial category
\begin{equation}
\label{equation_qcohX}
[\qcoh(S),\qcoh(X),\qcoh(X\times_SX),\qcoh(X\times_SX\times_SX),\ldots,p^*_{\bul}].
\end{equation}
This category satisfies assumptions A1 and A2 of Section~\ref{section_cosimplicial}:
the functors $p_{\bul}^*$ have right adjoint functors~$p_{\bul *}$ and the flat base change formula holds.
Consider the augmented cosimplicial subcategory of~(\ref{equation_qcohX}), formed by categories of coherent sheaves:
\begin{equation}
\label{equation_cohX}
[\coh(S),\coh(X),\coh(X\times_SX),\coh(X\times_SX\times_SX),\ldots,p^*_{\bul}].
\end{equation}
Note that this category does not satisfy assumption A1 of Section~\ref{section_cosimplicial}:
the push-forward functors do not preserve coherent sheaves. Denote by
$$\qcoh(X)/p=\Kern([\qcoh(X),\qcoh(X\times_SX),\ldots])$$ and
$$\coh(X)/p=\Kern([\coh(X),\coh(X\times_SX),\ldots])$$
the descent categories associated with~(\ref{equation_qcohX})
and~(\ref{equation_cohX}) as in Definition~\ref{def_dxbar}.
\begin{theorem}
\label{th_abdescentforstacks}
Let $X$ and $S$ be stacks satisfying Assumption~$(\dagger)$, and $p\colon X\ra S$ be a flat representable morphism. Then the category $\qcoh(X)/p$ is equivalent to the category $\qcoh(S)$ of quasi-coherent sheaves on $S$ if and only if the morphism
$p$ is strictly flat. For Noetherian stacks $S$ and $X$ these conditions hold if and only if the categories $\coh(X)/p$ and $\coh(S)$ are equivalent.
\end{theorem}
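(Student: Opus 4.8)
The plan is to pass from descent categories to comodules and then invoke the abelian form of Beck's theorem. Since $p$ is flat, the augmented cosimplicial category~\eqref{equation_qcohX} satisfies assumptions A1 and A2 of Section~\ref{section_cosimplicial}: each $p_{\bul}^*$ has the right adjoint $p_{\bul*}$, and flat base change holds for all the fibre-product squares built from $X\times_S\cdots\times_S X$. By Proposition~\ref{prop_twotypes} this gives an equivalence between $\qcoh(X)/p=\Kern([\qcoh(X),\qcoh(X\times_SX),\ldots])$ and the category $\CC_{\bul\TT}$ of comodules over the comonad $\TT=p^*p_*$ on $\CC_0=\qcoh(X)$ attached to the adjoint pair $(p^*,p_*)$, and under it the canonical functor $\Phi\colon\qcoh(S)\ra\qcoh(X)/p$ of Proposition~\ref{prop_comparisonsimpl}(2) becomes the Eilenberg--Moore comparison functor for $(p^*,p_*)$.

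The first half of the statement now follows from Corollary~\ref{cor_finalcond}(1). Both $\qcoh(S)$ and $\qcoh(X)$ are abelian, and $p^*$ is exact because $p$ is flat; hence $\Phi$ is fully faithful iff it is an equivalence, and this happens iff $p^*H\neq 0$ for every nonzero $H\in\qcoh(S)$. By the description of strict flatness recalled in Section~\ref{s6}, for a flat morphism this is precisely the condition that $p$ be strictly flat. For the reverse implication, if $p$ is flat but not strictly flat, choose $0\neq H_0\in\qcoh(S)$ with $p^*H_0=0$; then $\eta H_0\colon H_0\ra p_*p^*H_0=0$ is not a monomorphism, so $\Phi$ is not faithful and cannot be an equivalence. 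If one wants the statement for an abstract equivalence rather than for $\Phi$: a non-strictly-flat flat $p$ has proper open image $U\subsetneq S$, the fibre products over $S$ agree with those over $U$, so $\qcoh(X)/p\simeq\qcoh(U)$, and $\qcoh(U)\not\simeq\qcoh(S)$ by reconstruction of the underlying space from the category of quasi-coherent sheaves.

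For the coherent assertion, assume $S$ and $X$ Noetherian and apply Lemma~\ref{lemma_restrictcomparison} with the subcategory $\CC'=\coh(X)\subset\qcoh(X)$; by Corollary~\ref{cor_twotypes} the associated comodule subcategory $\CC'_{\bul\TT}$ is exactly $\coh(X)/p$. If $p$ is strictly flat then $\Phi$ is an equivalence, so Lemma~\ref{lemma_restrictcomparison} yields an equivalence $(p^*)^{-1}(\coh(X))\xra{\sim}\coh(X)/p$; it then remains to identify $(p^*)^{-1}(\coh(X))$ with $\coh(S)$, which is the combination of two facts: $p^*$ preserves coherence, and, conversely, coherence of $p^*H$ forces coherence of $H$ by faithfully flat descent of finite presentation along the quasi-compact faithfully flat $p$. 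If $p$ is not strictly flat, take $0\neq H_0\in\qcoh(S)$ with $p^*H_0=0$; on the Noetherian stack $S$ it contains a nonzero coherent subsheaf $H_1$, and exactness of $p^*$ gives $p^*H_1\subseteq p^*H_0=0$, so $H_1\in\coh(S)$ is a nonzero object killed by $p^*$ and the restriction of $\Phi$ to $\coh(S)$ is again not faithful; hence $\coh(X)/p\not\simeq\coh(S)$.

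Most of this is bookkeeping once Sections~\ref{section_comonad}--\ref{section_cosimpl<>comonad} are in place; the two points where genuine work is hidden are the faithfully flat descent of finite presentation identifying $(p^*)^{-1}(\coh(X))$ with $\coh(S)$ (standard over schemes, needing the version for a smooth cover of a stack) and, for the ``only if'' read as an abstract equivalence, the reconstruction step showing $\qcoh(U)\not\simeq\qcoh(S)$ for $U\subsetneq S$; stating the theorem for the canonical comparison functor $\Phi$ sidesteps the latter.
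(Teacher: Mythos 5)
Your proposal is correct and its skeleton --- pass to comodules via Proposition~\ref{prop_twotypes}, apply Corollary~\ref{cor_finalcond}(1) using exactness of $p^*$, and restrict to coherent sheaves via Corollary~\ref{cor_twotypes} and Lemma~\ref{lemma_restrictcomparison} --- is exactly the paper's. You diverge in two sub-steps of the coherent half, and both of your alternatives work. First, to identify $(p^*)^{-1}(\coh(X))$ with $\coh(S)$ you invoke faithfully flat descent of finite presentation; the paper instead proves this as Lemma~\ref{lemma_p*coh} by a self-contained argument: a non-coherent quasi-coherent $H$ on the Noetherian $S$ contains a strictly increasing chain of coherent subsheaves, which the already-established equivalence $\Phi$ sends to a strictly increasing chain inside the coherent sheaf $p^*H$ on the Noetherian $X$, a contradiction. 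Your citation is standard for schemes but, as you note, needs the smooth-cover bootstrap for stacks; the paper's route avoids external input at the cost of using the quasi-coherent equivalence. Second, for ``coherent equivalence $\Rightarrow$ quasi-coherent equivalence'' you argue by contrapositive (a nonzero $H_0$ with $p^*H_0=0$ contains a nonzero coherent subsheaf killed by the exact $p^*$, so $\Phi|_{\coh(S)}$ kills a nonzero object and is not faithful); the paper argues directly that the unit $H\ra p_*p^*H$ is injective on coherent sheaves (its kernel is coherent and killed by $p^*$, hence zero), extends this to all quasi-coherent sheaves by writing them as filtered colimits of coherent subsheaves, and then re-applies Corollary~\ref{cor_finalcond}(1). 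Your contrapositive is shorter and logically equivalent. One caution: your optional aside upgrading ``the comparison functor is an equivalence'' to ``no abstract equivalence exists'' via openness of the image and reconstruction of $S$ from $\qcoh(S)$ is not attempted in the paper and is genuinely problematic for stacks (reconstruction fails there: $\qcoh$ of a classifying stack of a diagonalizable finite group is equivalent to $\qcoh$ of a disjoint union of points), so it should be dropped or restricted to schemes; since the theorem is read throughout as a statement about the comparison functor, nothing is lost.
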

\begin{remark} In the case of schemes this result is well-known
(see, for example,~\cite[prop. 2.22]{Miln}). We give the proof just to demonstrate how Corollary~\ref{cor_finalcond} works here.
\end{remark}
\begin{proof}
Denote by $\qcoh(X)_{\TT_p}$ the descent category, associated with the comonad
$\TT_p=(p^*p_*,\e,\delta)$ on $\qcoh(X)$ (see Definition~\ref{def_dxt}).
The category~(\ref{equation_qcohX}) satisfies assumptions~A1 and~A2
of Section~\ref{section_cosimplicial}, therefore by Proposition~\ref{prop_twotypes}
the categories ${\qcoh(X)}/p$ and
$\qcoh(X)_{\TT_p}$ are equivalent.
Now we apply Corollary~\ref{cor_finalcond}.1. The functor $p^*$ is exact,
hence the comparison functor $\Phi\colon\qcoh(S)\ra
\qcoh(X)_{\TT_p}$ is an equivalence iff
for any
$H\in\qcoh(S)$ such that $p^*H=0$ one has $H=0$. The latter condition is equivalent to $p$ being strictly flat.

Now assuming $S$ and $X$ are Noetherian we want to show that the comparison functor is an equivalence on coherent categories  if and only if it is an equivalence on quasi-coherent categories. Denote by $\coh(X)_{\TT_p}$ the subcategory in $\qcoh(X)_{\TT_p}$,
corresponding to the subcategory $\coh(X)\subset \qcoh(X)$ (see
Definition~\ref{def_dxt'}). By Corollary~\ref{cor_twotypes},
the categories $\coh(X)/p$ and $\coh(X)_{\TT_p}$ are equivalent.

Suppose that the functor $\Phi\colon \qcoh(S)\ra \qcoh(X)_{\TT_p}$ is an equivalence. Let us check that the restriction of $\Phi$ is an equivalence between strictly full subcategories $\coh(S)\subset
\qcoh(S)$ and $\coh(X)_{\TT_p}\subset \qcoh(X)_{\TT_p}$. Lemma~\ref{lemma_restrictcomparison} claims that the restriction of~$\Phi$
is an equivalence between $(p^*)^{-1}(\coh(X))$ and $\coh(X)_{\TT_p}$. Therefore
we need to check that for any $H\in \qcoh(S)$ the sheaf $p^*H$ is coherent if and only if the sheaf $H$ is coherent. This is Lemma~\ref{lemma_p*coh}.

Conversely, suppose that the comparison functor $\Phi|_{\coh(S)}\colon
\coh(S)\ra {\coh(X)}_{\TT_p}$ is an equivalence. Let us show that the functor
$\Phi\colon \qcoh(S)\ra {\qcoh(X)}_{\TT_p}$ is also an equivalence.
We claim that for any sheaf $H\in \coh(S)$ the morphism $H\xra{\eta H} p_*p^*H$ is injective. Indeed, consider the kernel
$K=\ker(H\ra p_*p^*H)$. The morphism
$p^*\eta H\colon p^*H\ra p^*p_*p^*H$ is a split embedding, its inverse map is  a canonical adjunction morphism $\e p^*H\colon p^*p_*p^*H\ra p^*H$. Since $p^*$ is left exact, it follows that $p^*K=0$, hence $\Phi(K)=0$. But the sheaf $K$ on $S$ is coherent, so $K=0$.
Then, any quasi-coherent sheaf is a filtered colimit of its coherent subsheaves. Since filtered colimit is left exact, the map $H\to p_*p^*H$ is injective for all quasi-coherent sheaves $H$ on $S$ as well.
Applying Corollary~\ref{cor_finalcond}.1
we conclude that $\Phi\colon \qcoh(S)\ra
\qcoh(X)_{\TT_p}$ is an equivalence.
\end{proof}

An analogue of Theorem~\ref{th_abdescentforstacks} for derived categories is more
interesting.

Consider the following augmented cosimplicial category
\begin{equation} \label{equation_DqcohX}
[\D(S),\D(X),\D(X\times_SX),\D(X\times_SX\times_SX),\ldots,Lp^*_{\bul}],
\end{equation}
formed by the unbounded derived categories of quasi-coherent sheaves on stacks $S$, $X$, $X\times_SX,\ldots$ Denote by
$$\D(X)/p=\Kern([\D(X),\D(X\times_SX),\D(X\times_SX\times_SX),\ldots,Lp_{\bul}^*])$$
the classical descent category (see Definition~\ref{def_dxbar}).
The cosimplicial category~(\ref{equation_DqcohX}) satisfies assumptions A1 and
 A2 of Section~\ref{section_cosimplicial}: the functors
$Lp_{\bul}^*$ have right adjoint functors $Rp_{\bul *}$, the flat base change formula holds. Consider also the classical descent categories
$\D^b_{\coh}(X)/p$ и $\D^{\perf}(X)/p$ (see
Definition~\ref{def_dxbar} and Remark~\ref{remark_kern'}),
associated with~(\ref{equation_DqcohX}) and subcategories $\D^b_{\coh}(X)$
and $\D^{\perf}(X)$ in $\D(X)$.
\begin{theorem}
\label{th_descentforstacks}
Let $X$ and $S$ be stacks satisfying Assumption~$(\dagger)$ and  $p\colon X\ra S$ be a flat representable morphism.
Then the comparison functor $\D(S)\ra \D(X)/p$
is an equivalence if and only if the natural map $\O_S\ra Rp_*\O_X$ in the category
$\D(S)$ is an embedding of a direct summand.
Under this condition the comparison functors
$\D^{\perf}(S)\ra \D^{\perf}(X)/p$ and (for Noetherian $X$ and $S$)
$\D^b_{\coh}(S)\ra{\D^b_{\coh}(X)}/p$ are equivalences.
\end{theorem}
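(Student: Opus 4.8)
The plan is to apply the comonad machinery of Sections~\ref{section_comonad} and~\ref{section_cosimpl<>comonad}. By Proposition~\ref{prop_twotypes} applied to the cosimplicial category~\eqref{equation_DqcohX}, the classical descent category $\D(X)/p$ is equivalent to the comonad descent category $\D(X)_{\TT_p}$, where $\TT_p=(Lp^*Rp_*,\e,\delta)$ is the comonad on $\D(X)$ associated with the adjoint pair $(Lp^*,Rp_*)$. So it suffices to decide when the comparison functor $\Phi\colon \D(S)\ra \D(X)_{\TT_p}$ is an equivalence. The categories $\D(S)$ and $\D(X)$ are triangulated, and since $p$ is representable the functors $Lp^*$ and $Rp_*$ are exact and commute with arbitrary direct sums; moreover $\D(S)$ is Karoubian complete (it has all direct sums, hence all idempotents split). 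This puts us exactly in the setting of Corollary~\ref{cor_suffcond} and Corollary~\ref{cor_finalcond}.2.

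First I would prove the equivalence for unbounded derived categories. By Corollary~\ref{cor_finalcond}.2, $\Phi$ is fully faithful iff for every $H\in\D(S)$ the unit $\eta H\colon H\ra Rp_*Lp^*H$ is a split embedding. By the projection formula, $Rp_*Lp^*H\cong H\otimes^{\mathbf L} Rp_*\O_X$, and under this identification $\eta H$ becomes $H\otimes(\O_S\ra Rp_*\O_X)$; hence if $\O_S\ra Rp_*\O_X$ splits, so does $\eta H$ for all $H$, functorially in $H$. Then Corollary~\ref{cor_suffcond} (with $\BB=\D(S)$ Karoubian complete) shows $\Phi$ is an equivalence. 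For the converse: if $\Phi$ is an equivalence then in particular it is fully faithful, so by Corollary~\ref{cor_finalcond}.2 the unit $\eta\O_S\colon \O_S\ra Rp_*Lp^*\O_S = Rp_*\O_X$ is a split embedding — which is precisely the stated condition. (One should check $Lp^*\O_S\cong\O_X$, which is clear.)

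Next I would descend to the subcategories. Assuming $\O_S\ra Rp_*\O_X$ splits, $\Phi\colon \D(S)\ra\D(X)_{\TT_p}$ is an equivalence, so by Lemma~\ref{lemma_restrictcomparison} its restriction to $(Lp^*)^{-1}(\CC')$ is an equivalence onto $\CC'_{\TT_p}$ for any subcategory $\CC'\subset\D(X)$. Take $\CC'=\D^{\perf}(X)$: since $Lp^*$ sends perfect complexes to perfect complexes, and conversely one can detect perfectness of $H\in\D(S)$ after the strictly flat pullback $p$ (the condition that $\O_S$ be a direct summand of $Rp_*\O_X$ forces $p$ to be surjective, hence strictly flat, so $p^*$ is conservative and faithfully flat — a descent argument shows $H$ is perfect iff $Lp^*H$ is), we get $(Lp^*)^{-1}(\D^{\perf}(X))=\D^{\perf}(S)$, and the comparison functor $\D^{\perf}(S)\ra\D^{\perf}(X)/p$ is an equivalence. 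The same scheme works with $\CC'=\D^b_{\coh}(X)$ in the Noetherian case, using that $Lp^*$ preserves $\D^b_{\coh}$ and that for flat $p$ surjective the property of having bounded coherent cohomology descends along $Lp^*$.

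The main obstacle I expect is the descent of the subcategory conditions in the last step — specifically, showing that $H\in\D(S)$ is perfect (resp. has bounded coherent cohomology) \emph{whenever} $Lp^*H$ is, i.e.\ that these properties are local for the flat representable surjective $p$. Perfectness is a local notion by definition, so this is essentially a flat descent statement for the property ``locally quasi-isomorphic to a bounded complex of vector bundles''; for $\D^b_{\coh}$ one argues on cohomology sheaves, where coherence and boundedness descend along faithfully flat morphisms. These are standard but need care, particularly in the stacky setting where one passes to a smooth cover.
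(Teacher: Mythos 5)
Your treatment of the main biconditional is exactly the paper's: identify $\D(X)/p$ with the comonad descent category via Proposition~\ref{prop_twotypes}, get the ``only if'' from Corollary~\ref{cor_finalcond}.2 applied to $H=\O_S$, and get the ``if'' from the projection formula (so that $\eta H$ is $H\otimes^{\mathbf L}(\O_S\ra Rp_*\O_X)$) together with Corollary~\ref{cor_suffcond} and Karoubian completeness of $\D(S)$. The passage to subcategories via Lemma~\ref{lemma_restrictcomparison}, and the $\D^b_{\coh}$ case via descent of coherence and boundedness of cohomology along a faithfully flat $p$, also match the paper in substance (the paper packages the coherence step as Lemma~\ref{lemma_p*coh}, proved by a Noetherian chain argument).

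Where you genuinely diverge is the step you yourself flag as the main obstacle: showing $(Lp^*)^{-1}(\D^{\perf}(X))=\D^{\perf}(S)$. You propose to quote fppf/fpqc descent of perfection (``perfect can be checked after a faithfully flat pullback''), after first observing that the splitting of $\O_S\ra Rp_*\O_X$ forces $p$ to be surjective. That statement is true but is a theorem in its own right, not a consequence of the definition of perfectness (which is local only for the smooth, resp.\ Zariski, topology), so as written this is an appeal to an external result rather than a proof. The paper closes exactly this hole by a different and self-contained argument: it reduces to a morphism of schemes $p'\colon U'=U\times_SX\ra U$ by pulling back along a covering $U\ra S$, invokes the Bondal--Van den Bergh characterization of perfect complexes as the compact objects of $\D_{\qcoh}$ of a scheme, and then shows that compactness descends along $p'$ by a direct computation: since $\O_U$ is a direct summand of $Rp'_*\O_{U'}$ (SCDT is stable under base change, Proposition~\ref{predl_SCDTproperties}), the map $\bigoplus\Hom(H,F_\a)\ra\Hom(H,\bigoplus F_\a)$ is a direct summand of the corresponding map with $Rp'_*p'^*F_\a$ in place of $F_\a$, and the latter is an isomorphism by adjunction and compactness of $p'^*H$. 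Your route would work if you supply a proof or precise reference for fpqc descent of perfection (and check it applies under Assumption~$(\dagger)$, where $p$ need not be of finite presentation in the scheme case); the paper's compactness argument is what makes the step elementary and avoids that dependency.
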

\begin{remark} Note that the splitting of the map
$\O_S\ra Rp_*\O_X$ is not necessary for the functor
$\D^b_{\coh}(S)\ra\D^b_{\coh}(X)/p$ to be an equivalence, see Example~\ref{example_a1} below.
\end{remark}
\begin{proof}
Denote by $\D(X)_{\TT_p}$ the category of comodules over the comonad
$\TT_p=(p^*Rp_*,\e,\delta)$  on the category $\D(X)$ (see Definition~\ref{def_dxt}). By Proposition~\ref{prop_twotypes}, the descent categories
$\D(X)/p$ and $\D(X)_{\TT_p}$ are equivalent. We will use results from comonad theory.

Suppose that the comparison functor $\Phi\colon \D(S)\ra\D(X)_{\TT_p}$ is an equivalence, then it is fully faithful and due to Corollary~\ref{cor_finalcond}.2
the map $\O_S\ra
Rp_*p^*\O_S=Rp_*\O_X$ is a split embedding. The converse follows from the Corollary~\ref{cor_suffcond}. Indeed, the category $\D(S)$ is Karoubian complete. By the projection formula, for any  $H\in\D(S)$ the natural morphism
$H\ra Rp_*p^*H\cong H\otimes^LRp_*\O_X$ has the form
$H\otimes^L(\O_S\ra Rp_*\O_X)$. Consequently, the morphism of functors
$\Id\ra Rp_*p^*$ is split if the map
$\O_S\ra Rp_*\O_X$ is split.

Now suppose that the functor
$$\Phi\colon\D(S)\ra\D(X)_{\TT_p}$$ is an equivalence.
Denote by $\D^b_{\coh}(X)_{\TT_p}$ and $\D^{\perf}(X)_{\TT_p}$
the comonad descent categories associated with the subcategories
$\D^b_{\coh}(X)\subset\D(X)$ and $\D^{\perf}(X)\subset\D(X)$. As before,
Corollary~\ref{cor_twotypes} implies that
$\D^b_{\coh}(X)/p\cong\D^b_{\coh}(X)_{\TT_p}$ and $\D^{\perf}(X)/p\cong\D^{\perf}(X)_{\TT_p}$.
It remains to prove that the restriction of
$\Phi$ is an equivalence between strict full subcategories
$\D^b_{\coh}(S)\subset \D(S)$
and $\D^b_{\coh}(X)_{\TT_p}\subset \D(X)_{\TT_p}$. According to
Lemma~\ref{lemma_restrictcomparison}, the restriction of $\Phi$ is an equivalence between $(p^*)^{-1}(\D^b_{\coh}(X))$ and  $\D^b_{\coh}(X)_{\TT_p}$.
So we need to check that for any $H\in \D(S)$ the complex
$p^*H$ is in $\D^b_{\coh}(X)$ if and only if $H$ is in $\D^b_{\coh}(S)$. This follows from Lemma~\ref{lemma_p*coh} below and the exactness of the functor
$p^*$.

Arguing in the same manner in the case of perfect complexes we need to show that
$(p^*)^{-1}(\D^{\perf}(X))=\D^{\perf}(S)$.  Evidently, the pull-back of a perfect complex is a perfect complex. Let us check that the converse is true.
First, we reduce to the case of schemes.

Choose a covering $f\colon U\ra S$ of a stack $S$ by a scheme. The morphism $p$ is representable, hence the map $f'\colon U'=U\times_SX\ra X$ is also a covering by a scheme. By definition, an object $H\in\D(S)$ is a perfect complex on $S$ if and only if $f^*H$ is a perfect complex on $U$, the same is true for
$f'\colon U'\ra X$.
Perfect complexes on a scheme $U$ are precisely compact objects in the category
$\D(U)$ (see~\cite[3.1.1]{BvdB}), the same is true for~$U'$.
Thus we need to show that for a morphism of schemes
$p'\colon U'\ra U$ an object $H\in\D(U)$ is compact if the object
$p'^*H$ is compact in $\D(U')$.

Suppose $H\in\D(U)$ and $p'^*H$ is compact in $\D(U')$. Let $(F_{\a})$ be an arbitrary family of objects in $\D(U)$.
Consider the following  commutative diagram
\begin{equation}
\label{equation_Hcomp}
\vcenter{\xymatrix{ {\bigoplus \Hom(H,F_{\a})} \ar[r]
\ar[d] & {\bigoplus
\Hom(H,Rp'_*p'^*F_{\a})}\ar[d] \\
\Hom\left(H,\bigoplus F_{\a}\right)\ar[r] & \Hom\left(F,Rp'_*p'^*\left(\bigoplus F_{\a}\right)\right).
}} \end{equation}
Note that the sheaf $\O_U$ is a direct summand in
$Rp'_*\O_{U'}$ (see Proposition~\ref{predl_SCDTproperties}), therefore
the functor $\Id_{\D(U)}$ is a direct summand in $Rp'_*p'^*$. We conclude that the left column in~(\ref{equation_Hcomp}) is a direct summand of the right column.
Let us show that the morphism in the right column is an isomorphism.
We have:
\begin{align*}
\bigoplus\Hom(H,Rp'_*p'^*F_{\a})&=\bigoplus\Hom(p'^*H,p'^*F_{\a})&&\text{by adjunction}\\
&=\Hom\left(p'^*H,\bigoplus
p'^*F_{\a}\right)&&\text{because $p'^*H$ is compact}\\
&=\Hom\left(p'^*H,p'^*\left(\bigoplus F_{\a}\right)\right)&&\text{because $p'^*$ commutes with $\oplus$}\\
&=\Hom\left(H,Rp'_*p'^*\left(\bigoplus F_{\a}\right)\right)&&\text{by adjunction.}
\end{align*}
So the left column in~(\ref{equation_Hcomp}) is also an isomorphism,
and $H$ is a compact object of $\D(U)$.
\end{proof}

\begin{lemma}
\label{lemma_p*coh} Let $X,S$ and $p$ be as in Theorem~\ref{th_descentforstacks}.
Suppose that the functor $\Phi\colon \qcoh(S)\ra
\qcoh(X)_{\TT_p}$ is an equivalence. If $H$ is a quasi-coherent sheaf on  $S$
and the sheaf $p^*H$ is coherent then $H$ is also coherent.
\end{lemma}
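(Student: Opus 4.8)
The plan is to reduce the statement to faithful exactness of $p^*$ combined with a Noetherianity argument. First I would note that the hypothesis "$\Phi\colon\qcoh(S)\ra\qcoh(X)_{\TT_p}$ is an equivalence'' forces $p$ to be strictly flat: this is precisely the content of Theorem~\ref{th_abdescentforstacks}, and it is also visible directly from Corollary~\ref{cor_finalcond}.1, since $\Phi$ fully faithful means $\eta H\colon H\ra p_*p^*H$ is injective for every $H$, which fails as soon as $p^*H=0$ for some $H\neq 0$. Since $p$ is flat, $p^*$ is exact; since $p$ is strictly flat, $p^*$ is conservative on quasi-coherent sheaves. Thus $p^*$ is faithfully exact: $p^*G=0$ if and only if $G=0$, and $p^*$ preserves monomorphisms and commutes with all colimits (being a left adjoint).

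Next I would write $H$ as the filtered colimit $H=\varinjlim_{i\in I}H_i$ of its coherent subsheaves (the family of coherent subsheaves of $H$ is directed, since the sum of two coherent subsheaves is again coherent). Applying $p^*$ gives $p^*H=\varinjlim_i p^*H_i$, and each structure map $p^*H_i\ra p^*H$ is a monomorphism, so $\{p^*H_i\}_{i\in I}$ is a directed family of quasi-coherent subsheaves of $p^*H$ whose union is all of $p^*H$. Now I use that $X$ is Noetherian, so the coherent sheaf $p^*H$ is a Noetherian object of $\qcoh(X)$; hence the family $\{p^*H_i\}$ has a maximal member $p^*H_{i_0}$, and by directedness this member dominates every $p^*H_i$, so in fact $p^*H_{i_0}=p^*H$. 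Finally I check $H_{i_0}=H$: given any coherent subsheaf $H_j\subseteq H$, directedness produces a coherent subsheaf $H_k$ with $H_{i_0},H_j\subseteq H_k$, whence $p^*H_{i_0}\subseteq p^*H_k\subseteq p^*H=p^*H_{i_0}$, so $p^*(H_k/H_{i_0})\cong p^*H_k/p^*H_{i_0}=0$, and faithfulness of $p^*$ forces $H_k=H_{i_0}$, hence $H_j\subseteq H_{i_0}$. Therefore every coherent subsheaf of $H$ lies in $H_{i_0}$, so $H=H_{i_0}$ is coherent.

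The one point needing care — the main, though mild, obstacle — is the Noetherianity input: that a coherent sheaf on a Noetherian algebraic stack satisfies the ascending chain condition on quasi-coherent subsheaves. I would dispose of this by choosing a smooth surjective cover $U\ra X$ with $U$ a Noetherian scheme and using that restriction to $U$ is faithfully exact and carries coherent (resp.\ quasi-coherent) sheaves to coherent (resp.\ quasi-coherent) sheaves, so the chain condition descends from the classical Noetherian scheme case; alternatively one may simply invoke this as a standard property of Noetherian stacks. Everything else is formal manipulation with exact, colimit-preserving, conservative functors.
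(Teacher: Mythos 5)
Your proof is correct and follows essentially the same route as the paper's: both write $H$ as the filtered union of its coherent subsheaves and combine faithfulness of $p^*$ (a consequence of the equivalence hypothesis) with the ascending chain condition for subsheaves of the coherent sheaf $p^*H$ on the Noetherian stack $X$. The only difference is cosmetic --- the paper argues by contradiction with a strictly increasing chain of coherent subsheaves, while you extract a maximal member of the directed family --- and your extra care in justifying the chain condition on a Noetherian stack and in deriving strict flatness via Theorem~\ref{th_abdescentforstacks} merely makes explicit what the paper leaves implicit.
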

\begin{proof}
By~\cite[prop. 15.4]{La}, the sheaf $H$ is a union of its coherent subsheaves.
If~$H$ is not coherent then one can choose a strictly monotonous sequence  $H_1\ra H_2\ra H_3\ra\ldots$ of coherent subsheaves in~$H$. Since $\Phi$ is an equivalence, the sequence
$p^*H_i$ of subsheaves in $p^*H$ is also strictly monotonous. But the stack $X$ is Noetherian and the sheaf
$p^*H$ is coherent, that gives a contradiction.
\end{proof}

\section{SCDT morphisms}
\label{s8}

Let $p\colon X\ra S$ be a flat morphism.
Theorem~\ref{th_descentforstacks} shows that the splitting of the map $\O_S\to Rp_*\O_X$
is a criterion for the derived category of $S$ to be equivalent to the descent category associated with the morphism $p$.
Therefore the above property is of some interest.

 \begin{definition}
\label{def_SCDT}We say that the morphism of schemes or stacks
$p\colon X\ra S$ is \emph{SCDT}, if $p$ is flat and the natural morphism
$\O_S\ra
Rp_*\O_X$ is an embedding of a direct summand in the category of sheaves of $\O_S$-modules.  SCDT stands for``strictly cohomological descent type''.
\end{definition}

In this section we collect some facts about SCDT morphisms and some sufficient conditions for a morphism to be SCDT. Recall
that a functor $\Phi$ is said to be \emph{faithful} if for any pair of
objects $A,B$ the induced mapping
$$\Hom(A,B)\ra\Hom(\Phi(A),\Phi(B))$$ is injective.

\begin{lemma}
Let $X$ and $S$ be stacks satisfying Assumption~$(\dagger)$, and $p\colon X\ra S$ be
a flat morphism. Then $p$ is SCDT iff the functor $p^*\colon \D(S)\ra\D(X)$ is faithful.
\end{lemma}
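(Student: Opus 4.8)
I want to show the equivalence: $p$ is SCDT $\iff$ $p^*\colon \D(S)\to\D(X)$ is faithful. First I would unwind what faithfulness of $p^*$ means in terms of the unit $\eta\colon \Id_{\D(S)}\to Rp_*p^*$. By adjunction, for objects $A,B\in\D(S)$ the map $\Hom(A,B)\to\Hom(p^*A,p^*B)=\Hom(A,Rp_*p^*B)$ is precomposition-free and equals $f\mapsto \eta B\circ f$. So $p^*$ is faithful on all Hom-sets $\Hom(A,B)$ iff the map $(\eta B)_*\colon\Hom(A,B)\to\Hom(A,Rp_*p^*B)$ is injective for all $A,B$, which (taking $A$ to range over all objects, e.g. shifts of $B$ and more) is equivalent to $\eta B\colon B\to Rp_*p^*B$ being a monomorphism in $\D(S)$ for every $B$.

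**Reducing to the structure sheaf.** The key simplification comes from the projection formula, already invoked in the proof of Theorem~\ref{th_descentforstacks}: the natural morphism $B\to Rp_*p^*B$ is canonically identified with $B\otimes^L(\O_S\to Rp_*\O_X)$, i.e. $\eta$ is obtained from the single morphism $\alpha\colon\O_S\to Rp_*\O_X$ by tensoring. Thus $\eta B$ is a (split) monomorphism for all $B$ as soon as $\alpha$ is a split monomorphism — this direction is immediate and gives ``SCDT $\Rightarrow$ $p^*$ faithful''. For the converse, I would argue: if $p^*$ is faithful then in particular $\eta\O_S=\alpha$ is a monomorphism in $\D(S)$. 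Now in a triangulated category a monomorphism is automatically a split embedding — this is exactly the argument given in the proof of Corollary~\ref{cor_finalcond}.2 (complete $\alpha$ to a triangle $C\xrightarrow{h}\O_S\xrightarrow{\alpha}Rp_*\O_X\to C[1]$; since $\alpha h=0$ and $\alpha$ is mono, $h=0$, so the long exact $\Hom$-sequence yields a left inverse to $\alpha$). Hence $\O_S\to Rp_*\O_X$ is an embedding of a direct summand, i.e. $p$ is SCDT.

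**The main obstacle.** The one step requiring care is the equivalence ``$p^*$ faithful $\iff$ $\eta B$ monic for all $B$''. Faithfulness is a statement about injectivity of $\Hom(A,B)\to\Hom(p^*A,p^*B)$; I need that quantifying over $A$ recovers precisely the condition that $\eta B$ is a monomorphism. One direction is trivial ($\eta B$ monic $\Rightarrow$ all those maps injective, since $(\eta B)_*$ is then injective on every $\Hom(A,B)$). Conversely, if all these maps are injective, then taking $A$ to range over the whole category shows $(\eta B)_*\colon\Hom(-,B)\to\Hom(-,Rp_*p^*B)$ is a monomorphism of functors, which by Yoneda means $\eta B$ is a monomorphism in $\D(S)$. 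I expect this bookkeeping, together with invoking the projection-formula identification of $\eta$ with $-\otimes^L\alpha$, to be the only substantive content; the rest is quoting the triangulated ``mono $\Rightarrow$ split'' fact from the already-proven Corollary~\ref{cor_finalcond}.2. No boundedness or Noetherian hypotheses are needed here, so the proof is short.
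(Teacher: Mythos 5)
Your proof is correct and follows essentially the same lines as the paper's: the forward direction (the projection formula makes $\Id_{\D(S)}$ a direct summand of $Rp_*p^*$, which therefore cannot kill a nonzero morphism) is identical. For the converse the paper argues a bit more concretely --- it completes $\alpha\colon\O_S\to Rp_*\O_X$ to a triangle $\O_S\to Rp_*\O_X\to K\xra{f}\O_S[1]$, notes that applying $p^*$ splits this triangle by the adjunction identity so $p^*f=0$, and concludes $f=0$ from faithfulness --- whereas you first translate faithfulness into ``$\eta B$ is a monomorphism for every $B$'' via adjunction and Yoneda and then quote the fact, established in the proof of Corollary~\ref{cor_finalcond}.2, that monomorphisms in a triangulated category split; both versions are valid and of comparable length.
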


\begin{proof}
Suppose $p$ is SCDT. By the projection formula, the functor $\Id\colon \D(S)\ra\D(S)$ is a direct summand of the functor
$Rp_*p^*$. Therefore, $Rp_*p^*$ does not vanish on morphisms, so neither does~$p^*$.

Suppose $p^*$ is faithful. Consider the distinguished triangle
$\O_S\ra Rp_*\O_X\ra K\xra{f} \O_S[1]$. Let us check that $f=0$.
Applying $p^*$ to this triangle we obtain $p^*\O_S\ra p^*Rp_*p^*\O_S\ra
p^*K\xra{p^*f} p^*\O_S[1]$. Adjunction properties of $p^*$ and $Rp_*$ imply that
the latter triangle splits, so $p^*f=0$. But since $p^*$ is faithful, we have $f=0$.
\end{proof}

\begin{predl}
\label{predl_SCDTproperties}
Let $S,S',X,Y$ be stacks satisfying Assumption~$(\dagger)$.
\begin{enumerate}
\item If morphisms $q\colon Y\ra X$ and $p\colon X\ra S$ are SCDT
then $p\circ q$ is also SCDT. If $p$ is flat and $p\circ q$ is
SCDT then $p$ is also SCDT. If $q$ is SCDT and $p\circ q$ is flat then $p$ is also flat.
\item SCDT property is stable under base change: if a morphism
$p\colon X\ra S$ is SCDT, and $s\colon
S'\ra S$ is a base change, then the morphism $p'\colon X'=X\times_SS'\ra
S'$ is also SCDT.
\item If, in addition, the base change morphism is SCDT then $p$ and $p'$ are simultaneously SCDT (or not SCDT).
\item Let $\k\subset K$ be a field extension. The morphism
$p\colon X\ra S$ of stacks over $\k$ is SCDT if and only if the morphism $p'\colon X\times_{\k}K\ra
S\times_{\k}K$ is SCDT.
\end{enumerate}
\end{predl}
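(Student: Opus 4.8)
The plan is to reduce everything to the preceding Lemma, which identifies SCDT-ness of a flat morphism $p$ with faithfulness of $p^*\colon\D(S)\ra\D(X)$, together with one classical fact from commutative algebra. First I would record a corollary of the Lemma for repeated use: a faithful exact functor between triangulated categories is conservative (if $A\ne 0$ then $p^*\Id_A=\Id_{p^*A}\ne 0$, so $p^*A\ne 0$, and reflecting isomorphisms through cones finishes it). Hence an SCDT morphism $p$ has $p^*$ conservative on $\D(X)$, and therefore also on $\qcoh(X)$, so by the criterion recalled in Section~\ref{s6} it is strictly flat; in particular an SCDT morphism is surjective and faithfully flat.

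For part 1, the first assertion is then immediate: $p\circ q$ is flat and $(p\circ q)^*=q^*p^*$ is a composition of faithful functors, hence faithful. For the second assertion, $(p\circ q)^*=q^*p^*$ faithful forces $p^*$ faithful (from $p^*f=0$ one gets $q^*p^*f=0$, hence $f=0$), and $p$ is flat by hypothesis. For the third assertion I would use that $q$, being SCDT, is faithfully flat by the corollary above, and then appeal to descent of flatness along a faithfully flat morphism of the source; after passing to smooth atlases of $S$ and $X$ this is the commutative-algebra statement that for ring maps $A\ra B\ra C$ with $B\ra C$ faithfully flat and $A\ra C$ flat, the map $A\ra B$ is flat.

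For part 2, $p'$ is flat as a base change of a flat morphism, and since $p$ is flat the Cartesian square is Tor-independent, so flat base change yields a canonical isomorphism $Ls^*Rp_*\O_X\cong Rp'_*\O_{X'}$ compatible with the adjunction units; under it $Ls^*$ carries the natural map $\O_S\ra Rp_*\O_X$ to the natural map $\O_{S'}\ra Rp'_*\O_{X'}$, and since additive functors preserve split monomorphisms the latter splits, so $p'$ is SCDT. Part 3 is then formal: the projection $s'\colon X'\ra X$ is the base change of $s$ along $p$, hence SCDT by part 2, so $p\circ s'=s\circ p'$ is a composition of SCDT morphisms, hence SCDT by part 1, and in particular flat; part 1 now gives first that $p$ is flat (since $s'$ is SCDT and $p\circ s'$ is flat) and then that $p$ is SCDT (since $p$ is flat and $p\circ s'$ is SCDT). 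For part 4, the "only if" direction is part 2 applied to the base change $s\colon S\times_{\k}K\ra S$; for the converse, this $s$ is affine and flat with $Rs_*\O_{S\times_{\k}K}\cong\O_S\otimes_{\k}K$, and choosing a $\k$-basis of $K$ containing $1$ realizes $\O_S$ as a direct summand of $\O_S\otimes_{\k}K$ as sheaves of $\O_S$-modules, so $s$ is SCDT; since $X\times_{\k}K=X\times_S(S\times_{\k}K)$, the morphism $p\times_{\k}K$ is the base change of $p$ along $s$, and part 3 yields that $p$ is SCDT.

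The only genuinely non-formal point is the third assertion of part 1: everywhere else the argument just manipulates adjunctions, split idempotents and flat base change, but there one must know that an SCDT morphism is faithfully flat and that flatness of a morphism can be detected after a faithfully flat base extension of its source.
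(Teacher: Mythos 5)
Your proof is correct, and its global architecture coincides with the paper's: part 3 is deduced from parts 1 and 2 by factoring $s\circ p'=p\circ s'$ and recovering first the flatness and then the SCDT property of $p$ from those of $p\circ s'$; part 4 follows from part 3 once one checks that $S\times_{\k}K\ra S$ is SCDT (the paper leaves this implicit, you spell out the basis argument); and part 2 is flat base change applied to the split monomorphism $\O_S\ra Rp_*\O_X$. The divergence is in part 1. For the first two assertions the paper constructs splittings explicitly: the unit of $p\circ q$ factors as $\O_S\ra Rp_*\O_X\ra Rp_*Rq_*\O_Y$, so a retraction $\s$ of the composite yields the retraction $\s\circ Rp_*\eta_q$ of $\eta_p$. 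You instead route both assertions through the preceding Lemma (for flat $p$, SCDT is equivalent to faithfulness of $p^*$), which is equally valid because that Lemma is an equivalence, and is arguably tidier. For the third assertion the paper argues directly with quasi-coherent sheaves: letting $F$ be the middle cohomology of $p^*H_1\ra p^*H_2\ra p^*H_3$, exactness of $q^*$ and of $(p\circ q)^*$ gives $q^*F=0$, and the SCDT property of $q$ (split unit, hence $q^*$ kills no nonzero sheaf) gives $F=0$. Your route via descent of flatness along the faithfully flat $q$ proves the same fact but imports a commutative-algebra theorem and requires some care in passing to smooth atlases of stacks, which the paper's two-line sheaf-level argument sidesteps; the underlying content is the same, and your closing remark that this is the only non-formal step is accurate.
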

\begin{proof} The first statement in 1 is trivial. Let us prove the second. The composition of natural morphisms $$\O_S\xra{\eta_p} Rp_*\O_X\xra{Rp_*\eta_q} Rp_*Rq_*\O_Y$$ splits, let $\s\colon Rp_*Rq_*\O_Y\ra \O_S$ be a splitting morphism. Then $\s\circ Rq_*\eta_q$ is a splitting morphism for $\eta_p$. Let us prove the third statement. For this we need to check that the functor $p^*\colon \qcoh(S)\ra\qcoh(X)$ is exact. Suppose $H_1\ra H_2\ra H_3$ is an exact sequence of quasi-coherent sheaves on $S$, and $F$  is a middle term cohomology of the sequence
$p^*H_1\ra p^*H_2\ra p^*H_3$. Since the functors $q^*$ and $(p\circ q)^*$ are exact, we have $q^*F=0$. Since $q$ is SCDT, we conclude that $F=0$.

Part 2 follows from the flat base change formula (see~\cite[2.4]{Ku}): $Rp'_*\O_{X'}=s^*Rp_*\O_X$,
part 3~follows from 1 and 2, part 4~follows from 3. \end{proof}

\begin{predl}
\label{predl_finiteflat}
A finite flat morphism $p\colon X\ra S$ of quasi-compact quasi-separated schemes over a field of zero characteristic is SCDT.
\end{predl}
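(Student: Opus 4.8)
The plan is to exhibit a retraction of the unit morphism $\O_S\to p_*\O_X$ already in the abelian category $\qcoh(S)$, by means of the normalized trace. Since $p$ is finite it is affine, so $p_*$ is exact on quasi-coherent sheaves and $Rp_*\O_X=p_*\O_X$ is concentrated in degree zero; write $\AA=p_*\O_X$, a sheaf of $\O_S$-algebras. The condition of Definition~\ref{def_SCDT} then amounts to showing that the structure morphism $u\colon\O_S\to\AA$ is a split monomorphism of $\O_S$-modules, for a splitting in $\qcoh(S)$ is a fortiori a splitting in $\D(S)$ and in $\O_S\Mod$; flatness of $p$ is part of the hypothesis.

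Since $p$ is finite and flat, $\AA$ is locally free of finite rank over $\O_S$; its rank $n$ is a locally constant function on $S$, strictly positive exactly on the image of $p$. (For $p$ to be SCDT at all it must be surjective: on a connected component $S_0\subseteq S$ outside $p(X)$ one has $\AA|_{S_0}=0$, and $\O_{S_0}\to 0$ does not split unless $S_0=\emptyset$; so we assume $n>0$ everywhere on $S$.) First I would construct the trace morphism $\mathrm{tr}\colon\AA\to\O_S$: locally a section $a$ of $\AA$ acts on the finite locally free module $\AA$ by the $\O_S$-linear endomorphism $m_a$ given by multiplication, and one sets $\mathrm{tr}(a)=\mathrm{tr}(m_a)$; this is $\O_S$-linear, independent of the local trivialization, and hence glues to a global morphism. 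The one computation needed is $\mathrm{tr}\circ u=n\cdot\Id_{\O_S}$, which is immediate since $\mathrm{tr}(u(1))=\mathrm{tr}(\Id_{\AA})=n$.

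Here the hypothesis $\har\k=0$ enters: $n$ is a locally constant function with values in the positive integers, hence a unit in $\O_S$, because $\mathbb Q\subseteq\k$ and so every positive integer is invertible in the $\k$-algebra $\O_S$. Therefore $r:=n^{-1}\,\mathrm{tr}\colon\AA\to\O_S$ is a well-defined $\O_S$-linear morphism with $r\circ u=\Id_{\O_S}$, so $u$ splits and $p$ is SCDT. I do not expect a genuine obstacle: this is the classical ``$\tfrac1n\mathrm{tr}$'' splitting, and the only points needing a little care are the finite local freeness of $p_*\O_X$ (which follows from $p$ being finite and flat) and the bookkeeping of the a priori non-constant rank (harmless, since it is locally constant and, in characteristic zero, invertible). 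Note that, unlike for the étaleness criterion, one needs only that $\mathrm{tr}(1)$ be a unit, not that the trace pairing $(a,b)\mapsto\mathrm{tr}(ab)$ on $\AA$ be perfect.
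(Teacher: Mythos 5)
Your proof is correct, and it reaches the splitting by a more direct route than the paper. The paper never writes down a retraction of $\O_S\ra p_*\O_X$ explicitly: it forms the extension $0\ra\O_S\ra p_*\O_X\ra C\ra 0$, tensors it with the vector bundle $E=p_*\O_X$, observes that the tensored extension splits because the multiplication $E\otimes E\ra E$ retracts $E\ra E\otimes E$, and then shows that the induced map $\Ext^1(C,\O_S)\ra\Ext^1(C,\EEnd(E))$ is injective because $\tfrac1{r(E)}\mathrm{Tr}\colon\EEnd(E)\ra\O_S$ furnishes a left inverse; hence the extension class vanishes. Unwinding that argument yields precisely your map $a\mapsto\tfrac1n\mathrm{tr}(m_a)$, so both proofs rest on the same two ingredients --- local freeness of $p_*\O_X$ and invertibility of the rank in characteristic zero --- but yours exhibits the retraction explicitly instead of passing through $\Ext^1$, which is more elementary and makes the role of $\har\k=0$ transparent. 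Two side remarks. You are right to flag surjectivity: as literally stated the proposition fails for non-surjective $p$ (e.g.\ $X=\emptyset$), and the paper's $\tfrac1{r(E)}$ silently assumes $r(E)>0$ exactly as your $n^{-1}$ does, so your explicit caveat is an improvement rather than a detour. Finally, both proofs invoke ``finite flat $\Rightarrow$ $p_*\O_X$ locally free of locally constant rank,'' which outside the Noetherian or finitely presented setting deserves a word; since the paper makes the identical assertion, this is not a gap relative to it.
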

\begin{proof}
Since $p$ is affine, we have $Rp_*\O_X=p_*\O_X$ (higher direct images vanish).
Denote the quotient $p_*\O_X/\O_S$ by $C$. Let us demonstrate that the extension
$0\ra \O_S\ra p_*\O_X\ra C\ra 0$ splits.
Tensoring it by $p_*\O_X$, one obtains a split extension. Indeed, the mapping
$p_*\O_X\ra p_*\O_X\otimes p_*\O_X$ splits, an inverse mapping is given by multiplication in the sheaf $p_*\O_X$ of $\O_S$-algebras.
The morphism $p$ is finite and flat, hence the sheaf $E=p_*\O_X$ is a vector bundle on~$S$. Tensoring by $E$ induces a mapping
$$\s\colon \Ext^1(C,\O_S)\ra \Ext^1(C\otimes E,\O_S\otimes E)=\Ext^1(C,\EEnd(E)).$$
One can check that $\s$ is a monomorphism: the left inverse to $\s$ is given by
the trace $\EEnd(E)\ra \O_S$:
$$\Ext^1(C,\EEnd(E))\xra{\frac1{r(E)}Tr}\Ext^1(C,\O_S).$$
\end{proof}

We say that a morphism $p\colon X\ra S$ has \emph{a multi-section}
if there is a subscheme $Y\subset X$ such that the restriction
$p|_Y$ is a finite morphism $Y\ra S$. If such a subscheme $Y$ can be chosen to be flat over $S$ then $p$ is said to have a \emph{flat multi-section}.

\begin{corollary}
Assume that a flat morphism $p\colon X\ra
S$ of quasi-compact quasi-separated schemes over a field of characteristic zero has a flat multi-section.
Then $p$ is SCDT.
\end{corollary}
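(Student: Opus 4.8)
The plan is to reduce the statement to Proposition~\ref{predl_finiteflat} via the composition and base-change properties collected in Proposition~\ref{predl_SCDTproperties}. By hypothesis there is a subscheme $Y\subset X$ with $q=p|_Y\colon Y\ra S$ finite and flat; since the base field has characteristic zero, Proposition~\ref{predl_finiteflat} applies and $q$ is SCDT. The inclusion $i\colon Y\hookrightarrow X$ satisfies $p\circ i=q$, so $q$ is flat and factors through $p$. Now I invoke the third assertion of part~1 of Proposition~\ref{predl_SCDTproperties}: if $i$ is SCDT and $p\circ i=q$ is flat, then $p$ is flat --- but $p$ is already assumed flat, so this gives nothing directly. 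The useful statement is rather the second assertion of part~1: if $p$ is flat and $p\circ i$ is SCDT, then $p$ is SCDT. So the remaining task is to check that $i\colon Y\ra X$ is such that $p\circ i=q$ is SCDT (done) and that $p$ is flat (assumed); then $p$ is SCDT.

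Wait --- one must be slightly careful: the second assertion of part~1 reads ``if $p$ is flat and $p\circ q$ is SCDT then $p$ is SCDT,'' where there $q\colon Y\ra X$ and $p\colon X\ra S$. Here I take that $q$ to be the inclusion $i\colon Y\ra X$. This requires $p\circ i=q$ to be SCDT, which holds by Proposition~\ref{predl_finiteflat}, and $p$ to be flat, which is the hypothesis. Hence $p$ is SCDT, which is exactly the claim. So the proof is essentially a two-line deduction: apply Proposition~\ref{predl_finiteflat} to the finite flat multi-section $Y\ra S$, then apply the ``if $p$ flat and $p\circ q$ SCDT then $p$ SCDT'' clause of Proposition~\ref{predl_SCDTproperties}(1) with $q=i$.

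The only point that deserves a sentence of justification is why the clause of Proposition~\ref{predl_SCDTproperties}(1) does not itself require $i$ or $Y$ to be nice in any way beyond what we have --- inspecting its proof, the splitting $\sigma\colon Rp_*Rq_*\O_Y\ra\O_S$ of the composite $\O_S\ra Rp_*\O_X\ra Rp_*Rq_*\O_Y$ restricts (precomposing with $Rp_*\eta_q$, i.e.\ with the unit $\O_X\ra Rq_*\O_Y$ pushed forward) to a splitting of $\O_S\ra Rp_*\O_X$, and this argument is insensitive to whether $Y$ is a closed subscheme or the structure morphism $i$ is a closed immersion. So no additional hypothesis is needed.

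I do not foresee a genuine obstacle here: the statement is a formal corollary, and the substantive content is entirely in Proposition~\ref{predl_finiteflat} (where characteristic zero is used, via the trace splitting $\tfrac1{r(E)}\mathrm{Tr}$, hence the characteristic hypothesis in the corollary). The only thing to be mildly careful about is making sure the schemes $Y$ and $S$ inherit the standing hypotheses --- quasi-compact, quasi-separated, over the ground field --- so that Proposition~\ref{predl_finiteflat} is applicable; $Y$ is a subscheme of the quasi-compact quasi-separated scheme $X$, and $Y\ra S$ being finite gives no trouble, so this is automatic. Thus the write-up will be: ``Let $Y\subset X$ be a flat multi-section, so $q\colon Y\ra S$ is finite and flat; by Proposition~\ref{predl_finiteflat}, $q$ is SCDT. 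Since $p$ is flat and $q$ factors as $Y\hookrightarrow X\xra{p} S$ with the composite SCDT, Proposition~\ref{predl_SCDTproperties}(1) gives that $p$ is SCDT.''
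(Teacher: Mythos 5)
Your proof is correct and is exactly the paper's argument: the paper's own proof is the one-line "Follows from Proposition~\ref{predl_finiteflat} and Proposition~\ref{predl_SCDTproperties}.1," i.e.\ the finite flat multi-section $Y\ra S$ is SCDT by Proposition~\ref{predl_finiteflat}, and the clause ``$p$ flat and $p\circ q$ SCDT implies $p$ SCDT'' of Proposition~\ref{predl_SCDTproperties}(1) applied to the inclusion $Y\hookrightarrow X$ gives the conclusion. Your extra check that this clause imposes no conditions on the inclusion beyond the splitting argument is accurate and a welcome bit of diligence.
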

\begin{proof}
Follows from Proposition~\ref{predl_finiteflat} and
Proposition~\ref{predl_SCDTproperties}.1.
\end{proof}

\begin{predl}
Let $p\colon X\ra S$ be a smooth projective morphism of smooth schemes over a field of zero characteristic. Then $p$ is SCDT.
\end{predl}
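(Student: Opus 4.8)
The aim is to produce a splitting in $\D(S)$ of the canonical morphism $\O_S\to Rp_*\O_X$; since a smooth morphism is flat, this is precisely what Definition~\ref{def_SCDT} asks for. The key input is Deligne's decomposition theorem: for a smooth projective morphism $p$ over a field of characteristic zero there is an isomorphism $Rp_*\O_X\simeq\bigoplus_{i\ge 0}(R^ip_*\O_X)[-i]$ in $\D(S)$. Granting this, $p_*\O_X=R^0p_*\O_X$ (placed in cohomological degree $0$) becomes a direct summand of $Rp_*\O_X$. Moreover the canonical morphism $\O_S\to Rp_*\O_X$ is carried into this summand: its components lie in $\Hom_{\D(S)}(\O_S,(R^ip_*\O_X)[-i])=H^{-i}(S,R^ip_*\O_X)$, which vanishes for $i\ge 1$, while for $i=0$ the component is the unit $\iota\colon\O_S\to p_*\O_X$ of the $\O_S$-algebra $p_*\O_X$. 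Hence it suffices to split $\iota$ as a map of $\O_S$-modules: composing a retraction of $\iota$ with the projection $Rp_*\O_X\twoheadrightarrow p_*\O_X$ then retracts $\O_S\to Rp_*\O_X$.

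To split $\iota$ I would follow the trace argument of Proposition~\ref{predl_finiteflat}. Since $p$ is proper and flat with geometrically reduced fibres (smoothness), $p_*\O_X$ is a finite locally free sheaf of $\O_S$-algebras whose formation commutes with base change; equivalently, the Stein factorisation $X\to S'\to S$ has $S'\to S$ finite \'etale. Let $r$ denote the rank of $p_*\O_X$ over $\O_S$ --- a locally constant positive integer, hence a unit in $\O_S$ because $\har\k=0$. The trace map $\mathrm{tr}\colon p_*\O_X\to\O_S$ satisfies $\mathrm{tr}\circ\iota=\mathrm{tr}(1)=r\cdot\Id_{\O_S}$, so $\tfrac1r\mathrm{tr}$ is a retraction of $\iota$, and the proof is complete. (Alternatively, one may argue along the Stein factorisation directly: $S'\to S$ is finite \'etale, hence SCDT by Proposition~\ref{predl_finiteflat}, while $X\to S'$ has geometrically connected fibres, so $p'_*\O_X=\O_{S'}$ and Deligne's decomposition splits $\O_{S'}\to Rp'_*\O_X$ trivially; then $p$ is SCDT by Proposition~\ref{predl_SCDTproperties}.1.)

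The only non-formal ingredient is the decomposition $Rp_*\O_X\simeq\bigoplus_i(R^ip_*\O_X)[-i]$, and I expect this to be the main obstacle: it is where characteristic zero is really used (via relative Hodge theory, or via the hard Lefschetz theorem applied to a relatively ample class on $X$), and it genuinely fails in positive characteristic --- exactly as the inversion of $r$ does. Everything else is the same adjunction-and-trace bookkeeping already carried out in Proposition~\ref{predl_finiteflat}.
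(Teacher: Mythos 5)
Your proof is correct and follows essentially the same route as the paper: Deligne's decomposition theorem splits off $p_*\O_X$ as a direct summand of $Rp_*\O_X$, and a trace argument splits the unit $\O_S\to p_*\O_X$ of the locally free algebra $p_*\O_X$ (your direct retraction $\tfrac1r\mathrm{tr}$ is a cleaner form of the $\Ext^1$-and-trace computation the paper borrows from Proposition~\ref{predl_finiteflat}). Your remark that the canonical map has no components in $(R^ip_*\O_X)[-i]$ for $i\ge 1$ just makes explicit a point the paper leaves implicit.
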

\begin{proof}
By a result of Deligne~\cite[th. 6.1]{De_deg}, the complex $Rp_*\O_X$ is a direct sum of its cohomologies. Therefore  $p_*\O_X$ (the push-forward in the abelian category of sheaves) is a direct summand of $Rp_*\O_X$. By~\cite[th. 5.5]{De_deg},
the sheaf $p_*\O_X$ is a vector bundle. The arguments from the proof of
Proposition~\ref{predl_finiteflat} show that $\O_S$ is a direct summand of $p_*\O_X$, and hence of $Rp_*\O_X$.
\end{proof}

Now we provide an example of a flat affine (and moreover, locally trivial in Zariski topology)
morphism of smooth varieties over a field, which is not SCDT.
The derived descent category in this example is not equivalent to the derived category of the base.

\begin{example}
\label{example_gl/p} Let $V$ be a vector space over a field $\k$.
Let $X$ be the linear group $GL(V)$, let $P\subset X$ be its parabolic subgroup.
Consider the homogeneous space
$S=X/P$, it is a smooth projective variety. Denote by $d$ the dimension of $S$, denote by $p$ the quotient map $X\ra S$.
Take two line bundles $\LL_1=\O_S$ and
$\LL_2=\omega_S$ so that
$$\Hom_{\D^b_{\coh}(S)}(\LL_1,\LL_2[d])=\Ext^d(\LL_1,\LL_2)\ne 0.$$
Applying the comparison functor, we get
\begin{multline*}
\Hom_{\D^b_{\coh}(X)/p}(\Phi(\LL_1),\Phi(\LL_2[d]))\subset \Hom_{\D^b_{\coh}(X)}(p^*\LL_1,p^*\LL_2[d]) =\\
=\Hom_{\D^b_{\coh}(X)}(\O_X,\O_X[d])=0,
\end{multline*}
because $X$ is affine and $\Pic X=0$. We conclude that the comparison
functor
$\Phi\colon \D^b_{\coh}(S)\ra \D^b_{\coh}(X)/p$ is not fully faithful.
\end{example}

In the next example we demonstrate that the objects in the derived category of coherent sheaves cannot be defined locally in Zariski topology.
\begin{example}
Let $S$ be a scheme, let $S=\bigcup U_i$ be an affine covering of $S$.
Denote  $X=\bigsqcup U_i$, denote by $p\colon X\ra S$ the natural map, let
$$\Phi\colon \D^b_{\coh}(S)\ra \D^b_{\coh}(X)/p$$
be the comparison functor. For any coherent sheaf $F$ on $S$ and
$k>0$ we have
$$\Hom_{\D(X)/p}(\Phi(\O_S),\Phi(F[k]))\subset \Hom_{\D(X)}(\O_X,p^*F[k])=H^k(X,\oplus F|_{U_i})=0.$$
On the other hand, if the scheme $S$ is not affine then for some coherent sheaf
$F$ and $k>0$ one has $$\Hom_{\D(S)}(\O_S,F[k])=H^k(S,F)\ne 0.$$
In this case the functor $\Phi$ is not an equivalence and the morphism $p$
is not SCDT.
\end{example}

Now we show that the comparison functor can give  an equivalence between the bounded derived categories of coherent sheaves for a morphism   $p\colon X\ra S$ which is not SCDT.
\begin{example}
\label{example_a1}
Let $S=\A^1$ be an affine line over an algebraically closed field $\k$,
let $P_1,P_2\in \A^1$ be two different points.
Let $X=(\A^1\setminus P_1)\bigsqcup(\A^1\setminus P_2)$ be a disjoint union of two punctured lines and  $p\colon X\ra S$ be the natural mapping.
We claim that the comparison functor
$$\Phi\colon \D^b_{\coh}(S)\ra \D^b_{\coh}(X)/p$$
is an equivalence while the morphism $\O_S\ra Rp_*\O_X$ is not split and so the comparison functor
$$\D(S)\ra \D(X)/p$$
is not an equivalence.
\end{example}

\begin{proof}
The cohomological dimension of the category $\coh(\A^1)$ is $1$, hence any object in $\D^b_{\coh}(\A^1)$ is quasi-isomorphic to a direct sum of its cohomology sheaves.
Further, any coherent sheaf on $\A^1$ is a direct sum of undecomposable sheaves of the form
$$\O_{\A^1} \quad\text{or}\quad \O_{rP}, P\in\A^1,$$
where $\O_{rP}$ denotes the structure sheaf of $r$-th neighborhood of a point $P$.
We use the following notation
\begin{align*}
U_1&=\A^1\setminus P_1, \\
U_2&=\A^1\setminus P_2, \\
U_{12}&=U_1\cap U_2=\A^1\setminus \{P_1,P_2\}.
\end{align*}
To verify that $\Phi$ is fully faithful we check that
\begin{equation}
\label{equation_homhom}
\Hom_{\D^b_{\coh}(\A^1)}(H_1,H_2[k])=\Hom_{\D^b_{\coh}(X)/p}(\Phi(H_1),\Phi(H_2[k]))
\end{equation}
for any undecomposable sheaves $H_1$ and  $H_2$ and all $k$.
For
$H_1=\O_{\A^1}$ and  $k=0$ this holds because the comparison functor for abelian categories of coherent sheaves is fully faithful, see
Theorem~\ref{th_abdescentforstacks}. For $H_1=\O_{\A^1}$ and $k\ne
0$ both sides of~(\ref{equation_homhom}) vanish because $\A^1$ and $X$ are affine. Let $H_1=\O_{rP}$
and $P\ne P_1,P_2$. Then we have
\begin{multline}
\label{equation_hom4}
\Hom_{\D^b_{\coh}(\A^1)}(\O_{rP},H_2[k])=\Hom_{\D^b_{\coh}(U_{12})}(\O_{rP},H_2[k]|_{U_{12}})=\\
=\Hom_{\D^b_{\coh}(X')/p'}(\Phi'(\O_{rP}),\Phi'(H_2[k]|_{U_{12}}))=\Hom_{\D^b_{\coh}(X)/p}(\Phi(\O_{rP}),\Phi(H_2[k])),
\end{multline}
where
$X'=U_{12}\sqcup U_{12}$,
the morphism $p'\colon X'\ra U_{12}$ is the natural mapping and $\Phi'\colon
\D^b_{\coh}(U_{12})\ra \D^b_{\coh}(X')/p'$ is the comparison functor.
The middle equality in~(\ref{equation_hom4}) holds because $p'$ has a section and therefore is SCDT.
The last equality in~(\ref{equation_hom4}) is checked by comparing definitions of $\Hom$ spaces in $\D^b_{\coh}(X')/p'$ and $\D^b_{\coh}(X)/p$.
Finally, for $H_1=\O_{rP}$ and $P=P_1$ (or
$P_2$), (\ref{equation_homhom}) is also true:
$$\Hom_{\D^b_{\coh}(\A^1)}(H_1,H_2[k])=\Hom_{\D^b_{\coh}(X)}(p^*H_1,p^*H_2[k])
=\Hom_{\D^b_{\coh}(X)/p}(\Phi(H_1),\Phi(H_2[k])).$$
Here the first equality holds because $p^*H_1$ is $H_1|_{U_2}$ supported on~$U_2$ and $0$ supported on~$U_1$. The second equality holds because $H_1|_{U_{12}}=0$ and compatibility conditions are empty.

Now we check that $\Phi$ is essentially surjective.
By definition, any object of the category
$$\D^b_{\coh}(X)/p=\D^b_{\coh}(U_1\sqcup U_2)/p$$
is an object of the derived category $\D^b_{\coh}(X)$ equipped with gluing data.
Any object in $F$ in  $\D^b_{\coh}(X)$ has the form
$F=F_1\oplus F_2$, where summands
\begin{align*}
F_1=&\left(\bigoplus\limits_i \O_{U_{1}}[k_i]\right)
\bigoplus\left(\bigoplus\limits_{i: Q_i\ne
P_1} \O_{r_iQ_i}[l_i]\right), \\
F_2=&\left(\bigoplus\limits_i \O_{U_{2}}[k'_i]\right)
\bigoplus\left(\bigoplus\limits_{i: Q'_i\ne P_2}
\O_{r'_iQ'_i}[l'_i]\right)
\end{align*}
are shifts of sheaves supported on $U_1\subset X$ and $U_2\subset X$ respectively, and all sums are finite. Gluing data is an isomorphism
$$\theta\colon F_1|_{U_{12}}\xra{\sim}F_2|_{U_{12}}.$$
Since such an isomorphism exists, the sums
\begin{align*}
&\left(\bigoplus\limits_i \O_{U_{12}}[k_i]\right)
\bigoplus\left(\bigoplus\limits_{i: Q_i\ne
P_1, P_2} \O_{r_iQ_i}[l_i]\right), \\
&\left(\bigoplus\limits_i \O_{U_{12}}[k'_i]\right)
\bigoplus\left(\bigoplus\limits_{i: Q'_i\ne P_1, P_2}
\O_{r'_iQ'_i}[l'_i]\right)
\end{align*}
coincide. I.e., $F_1$ and  $F_2$ coincide everywhere except for the points $P_1$ and $P_2$. Therefore, for a certain object $H\in
\D^b_{\coh}(\A^1)$ we have $\Phi(H)=(F,\theta)$.

On the other hand, it is easily seen that the canonical mapping
$\O_S\ra Rp_*\O_X$
does not split. Indeed,
$$\Hom(Rp_*\O_X,\O_S)=\Hom(p_*\O_X,\O_S)=\Hom(\O_{\A^1\setminus P_1}\oplus \O_{\A^1\setminus P_2},\O_{\A^1})=0.$$
\end{proof}

\section{Derived descent theory for equivariant sheaves}\label{s9}

Throughout this section $X$ will denote a scheme of finite type over $\k $ (where $\k$ is an arbitrary field), by an algebraic group $G$ we will understand
a group scheme of finite type over $\k$.

Suppose an algebraic group $G$ acts on a scheme $X$. Denote by $a\colon G\times X\ra X$
the action morphism, denote by $\mu\colon G\times G\ra G$ the structure morphism of the group.
By $p_i$ and $p_{jk}$ we will denote the projections from $G\times X$ and $G\times G\times X$ onto factors.
\begin{definition}[see~\cite{BL}]
\label{def_equivariant}
By definition, a \emph{$G$-equivariant sheaf} on $X$ is a sheaf $F$ on $X$ equipped with an isomorphism
$\theta\colon p_2^*F\ra a^*F$ of sheaves on $G\times X$ satisfying the cocycle condition: on $G\times G\times X$ the following equality holds
$$(1\times a)^*\theta\circ p_{23}^*\theta=(\mu\times 1)^*\theta.$$

Morphisms of equivariant sheaves from $(F_1,\theta_1)$ to $(F_2,\theta_2)$ are by definition morphisms $f\colon F_1\ra F_2$ compatible with $\theta$, i.e. such $f$ that
$\theta_2\circ p_2^*f=a^*f\circ \theta_1$.
\end{definition}

In the interesting special case of finite groups the above definition can be reformulated as follows.
\begin{definition}
A \emph{$G$-equivariant sheaf} on $X$ is a sheaf
$F$ on $X$ equipped with isomorphisms $\theta_g\colon  F\ra g^*F$ for any
$g\in G$ such that $\theta_{gh}=h^*(\theta_g)\circ \theta_h$ for any pair
$g,h\in G$. A \emph{morphism} of $G$-equivariant sheaves from
$(F_1,\theta_{1,g})$ to $(F_2,\theta_{2,g})$ is a morphism of sheaves
$f\colon F_1\ra F_2$ such that for any $g\in G$ one has
$\theta_{2,g}\circ f=g^*f\circ \theta_{1,g}$.
\end{definition}

We denote the abelian category of $G$-equivariant quasi-coherent (resp/ coherent)
sheaves on $X$ by $\qcoh^G(X)$ (resp. by $\coh^G(X)$).
Consider the unbounded derived category of $G$-equivariant sheaves of $\O_X$-modules. Denote by $\D^G(X)$ its full subcategory of complexes with
$G$-equivariant quasi-coherent cohomology sheaves. Denote by  $\D^{b,G}_{\coh}(X)$ the full subcategory of $\D^G(X)$, formed by complexes whose cohomology sheaves are coherent and almost all equal to zero. Finally, a \emph{$G$-equivariant perfect complex} on $X$ is an object of $\D^G(X)$, which is a perfect complex on $X$ when forgetting
the equivariant structure. The  category of $G$-equivariant  perfect complexes on
$X$ is denoted by $\D^{\perf,G}(X)$.

One can look at the definition of an equivariant sheaf from a slightly different point of view. Consider the following simplicial scheme
\begin{equation}
\label{equation_X/G}
(X/G)_{\bul}=[X_0,X_1,X_2,\ldots,p_{\bul}]=[X,G\times X,G\times G\times X,\ldots,p_{\bul}],
\end{equation}
where morphisms $p_{\bul}$ are defined as follows. For a nondecreasing mapping
$f\colon [1,\ldots,m]\ra [1,\ldots,n]$ define a morphism of schemes
$$p_f\colon \underbrace{G\times \ldots\times G\times X}_{n}\ra
\underbrace{G\times \ldots\times G\times X}_{m}$$
by the rule
$$(g_n,\ldots,g_2,x_1)\mapsto (g_{f(m)}\cdot\ldots\cdot g_{f(m-1)+1},\ldots,
g_{f(2)}\cdot\ldots\cdot g_{f(1)+1},g_{f(1)}\cdot\ldots\cdot g_2x_1).$$
For small $n=m\pm 1$ and increasing $f$ the morphisms $p_{\bul}$
are shown below
$$\xymatrix{
G\times G\times X \ar@<-10mm>[rr]^-{p_{23}} \ar[rr]^-{\mu\times 1} \ar@<10mm>[rr]^-{1\times a} &&
G\times X \ar@<-5mm>[rr]^-{p_2} \ar@<5mm>[rr]^-{a}\ar@<5mm>[ll]_-{e\times 1}
\ar@<-5mm>[ll]_-{p_1\times e\times p_2}&& X \ar[ll]_-{e\times 1}}.$$
Consider the descent category
\begin{equation}
\label{equation_kernggx}
\Kern([\qcoh(X),\qcoh(G\times X),\qcoh(G\times  G\times X),\ldots,p_{\bul}^*]),
\end{equation}
associated with the cosimplicial category formed by categories of sheaves
on schemes $X_i=G^{\times i}\times X$  and pull-back functors.
A comparison of Definitions~\ref{def_dxbar} and \ref{def_equivariant} shows that the
category~(\ref{equation_kernggx}) is equivalent to the category of
$G$-equivariant quasi-coherent sheaves on~$X$, see also~\cite[6.1.2b]{De}.

For any morphism of stacks $p\colon X\ra S$ we considered in section \ref{s7} an augmented simplicial stack, formed by fibred products
$$(X\ra S)_{\bul}=[S,X,X\times_S X,X\times_S X\times_S X,\ldots,p_{\bul}].$$
It is interesting that the cosimplicial scheme~(\ref{equation_X/G}) has the same form if one takes as $p$ the canonical mapping from a scheme $X$ to the quotient stack $X\quot G$, which is the quotient of the scheme $X$ by the action of the
group~$G$. Definition and basic properties of the stack $X\quot G$
can be found in~\cite[1.3.2 and 4.14.1.1]{La}. Under hypotheses of this section $X\quot G$
is an algebraic stack of finite type over $\k$.
The morphism $p$ is strictly flat, hence by
Theorem~\ref{th_abdescentforstacks} quasi-coherent sheaves on the stack $X\quot G$ can be defined locally with respect to the covering~$p$, see also~\cite[6.23]{La}.
In the other words,
\begin{multline*}
\qcoh(X\quot G)=\Kern([\qcoh(X),\qcoh(X\times_{X\quot G}X),\qcoh(X\times_{X\quot G}X\times_{X\quot G}X),\ldots,p_{\bul}^*])=\\
=\Kern([\qcoh(X),\qcoh(G\times X),\qcoh(G\times  G\times X),\ldots,p_{\bul}^*])=
\qcoh^G(X).
\end{multline*}
That is, sheaves on $X\quot G$ are $G$-equivariant sheaves on $X$. This fact allows to use the language of stacks for working with equivariant sheaves.
Note that the categories $\D^G(X), \D^{b,G}_{\coh}(X)$ and $\D^{\perf,G}(X)$ coincide with corresponding versions of derived categories of sheaves on the stack
$X\quot G$. Also note that if the scheme $X$ (and hence, the stack $X\quot G$,
see~\cite[lemma 2.1]{AB}) is semi-separated, the category $\D^{b,G}_{\coh}(X)$ is equivalent to the bounded derived category $\D^b(\coh^G(X))$
of equivariant sheaves on~$X$.

We are interested in the following question: when an object of the derived category of sheaves on $X$ plus an action of $G$ on this object determine an object of the derived category of $G$-equivariant sheaves? To be precise, when  the category
$\D^G(X)$ is equivalent to the descent category $\Kern$,
associated with the cosimplicial category
\begin{equation}
\label{cats_DXGX}
[\D(X),\D(G\times X),\D(G\times G\times X),\ldots,Lp_{\bul}^*]?
\end{equation}
As we remarked above, the category~(\ref{cats_DXGX}) has the form
\begin{equation*}
[\D(X),\D(X\times_S X),\D(X\times_S X\times_S X),\ldots,Lp_{\bul}^*],
\end{equation*}
for the morphism of stacks $p\colon X\ra S=X\quot G$, thus the results of Section~\ref{s7} can be applied. Below we answer the question.

\medskip
Since we are going to deal with infinite dimensional representations of algebraic groups, we recall some relevant notions.
\begin{definition}
Let $G$ be an algebraic group over a field $\k$. \emph{A (rational) linear representation} of~$G$ over $\k$ can be defined in any of the following ways
(see~\cite[ch. 1, \S 1]{MF}):
\begin{enumerate}
\item as inductive limit of finite dimensional representations of $G$ over~$\k$;
\item as an object of the descent category (see Definitions~\ref{def_dxbar} and~\ref{def_dxt}) for the morphisms of stacks $p\colon\Spec\k\ra (\Spec\k)\quot G$;
\item as a $G$-equivariant sheaf on $\Spec\k$;
\item as a comodule over the coalgebra $\k[G]=\Gamma(G,\O_G)$ (this definition is suitable only for affine group schemes).
\end{enumerate}
\end{definition}

All representations in the sequel are supposed to be rational.

\begin{definition}
An algebraic group $G$ over $\k$ is called \emph{linearly reductive} (see~\cite[def. 1.4]{MF}) if the category of finite dimensional representations (or, equivalently,
of all representations) of $G$ over $\k$ is semisimple.
\end{definition}
Denote the category of all rational (resp. of finite dimensional rational)
linear representations of $G$ by $\Repr(G)$ (resp. by $\repr(G)$).

\begin{predl}
\label{prop_linred}
Let $G$ be a group scheme of finite type over $\k$.
Then the following conditions are equivalent:
\begin{enumerate}
\item $G$ is linearly reductive;
\item the natural homomorphism $\k\ra \k[G]$ is an embedding of a direct summand in the category $\Repr(G)$;
\item the comparison functor $\Phi$ is an equivalence between derived categories $\D^b(\repr(G))$ and $\D^b(\k\mmod)/p$;

{\hspace{-1cm}} If, in addition, the group scheme $G$ is affine and smooth over $\k$,
then conditions 1-3 are equivalent to the following

\item for $\har\k=l>0$: the connected component of identity $G_0\subset G$
is a diagonalizable torus $(\G_m)^r$ and the order of the finite group $G/G_0$ is co-prime to $l$; for $\har\k=0$: $G$ is reductive.
\end{enumerate}
\end{predl}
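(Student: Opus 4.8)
The plan is to establish the equivalences $1 \Leftrightarrow 2 \Leftrightarrow 3$ first, using the language of equivariant sheaves on $\Spec\k$ and the descent machinery already developed, and then to treat $3 \Leftrightarrow 4$ (under the affine smooth hypothesis) by invoking the classification of linearly reductive groups. For the first block, note that $\Repr(G) = \qcoh^G(\Spec\k)$ and that, with $p\colon \Spec\k \to (\Spec\k)\quot G$, the morphism $p$ is strictly flat, so by Theorem~\ref{th_abdescentforstacks} the abelian descent category $\qcoh(\Spec\k)/p$ is equivalent to $\qcoh((\Spec\k)\quot G) = \Repr(G)$. The comonad $\TT_p$ on $\qcoh(\Spec\k) = \k\mmod$ attached to the adjoint pair $(p^*, p_*)$ has underlying functor $T = p^*p_* = (-)\otimes_\k \k[G]$ (this is the base-change computation $p^*p_* = p_{2*}p_1^*$ combined with $p_{1*}p_1^* (-) = (-)\otimes\k[G]$).

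First I would prove $1 \Leftrightarrow 2$. The natural map $\k \to \k[G]$ is exactly the coaugmentation $\eta_{\Id}\colon \Id \to p_*p^*$ evaluated at the trivial representation, i.e.\ the unit of the coalgebra $\k[G]$, regarded in $\Repr(G)$. If $G$ is linearly reductive, $\Repr(G)$ is semisimple, so the embedding $\k \hookrightarrow \k[G]$ of the trivial subrepresentation automatically splits, giving 2. Conversely, if $\k \to \k[G]$ splits in $\Repr(G)$, then by the projection formula (tensoring with an arbitrary representation $V$) the map $V \to V\otimes\k[G]$ splits naturally in $V$; but $V\otimes\k[G]$ is, as a $G$-module, a sum of copies of $\k[G]$, hence an injective object of $\Repr(G)$ (since $\k[G] = p_*\O_{\Spec\k}$ is injective for the exact functor $p_*$ with exact left adjoint), and a module that embeds as a direct summand of an injective is injective; thus every object of $\Repr(G)$ is injective, which for a category with enough injectives forces semisimplicity — so $G$ is linearly reductive. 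This gives $1 \Leftrightarrow 2$.

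Next I would prove $2 \Leftrightarrow 3$ by specializing Theorem~\ref{th_descentforstacks} to $p\colon \Spec\k \to (\Spec\k)\quot G$. Since $Rp_*\O_{\Spec\k} = p_*\O_{\Spec\k} = \k[G]$ (higher direct images vanish because $p$ is affine — $G$ is affine), the condition ``$\O_S \to Rp_*\O_X$ is an embedding of a direct summand in $\D(S)$'' becomes ``$\k \to \k[G]$ is a split embedding in $\D^b(\repr(G))$'', which, because $\k[G]$ sits in cohomological degree $0$, is equivalent to the same splitting in the abelian category $\Repr(G)$, i.e.\ condition 2. Theorem~\ref{th_descentforstacks} then says precisely that this splitting is equivalent to the comparison functor $\D(S) \to \D(X)/p$ being an equivalence, and (since everything in sight is Noetherian and the relevant categories are finite-dimensional) to the comparison functor $\D^b_{\coh}(S) \to \D^b_{\coh}(X)/p$ being an equivalence; unwinding, $\D^b_{\coh}(S) = \D^b(\coh^G(\Spec\k)) = \D^b(\repr(G))$ and $\D^b_{\coh}(X)/p = \D^b(\k\mmod)/p$, which is condition 3. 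Here one should take a moment to check the identification of $\D^b_{\coh}((\Spec\k)\quot G)$ with $\D^b(\repr(G))$, which is legitimate because $\Spec\k$, and hence the quotient stack, is semi-separated (as noted in Section~\ref{s9}).

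Finally, for $3 \Leftrightarrow 4$ under the hypothesis that $G$ is affine and smooth, I would reduce to $1 \Leftrightarrow 4$ (already available via $1 \Leftrightarrow 2 \Leftrightarrow 3$) and invoke the known classification of linearly reductive groups: in characteristic zero, linear reductivity coincides with reductivity (Haboush/Weyl, or the classical theorem that reductive groups in characteristic $0$ have semisimple representation category); in characteristic $l > 0$, by the Nagata–Demazure–Gabriel classification a smooth affine group scheme is linearly reductive if and only if its identity component is a torus and the component group has order prime to $l$. I would cite these results rather than reprove them. \textbf{The main obstacle} I anticipate is not any single deep step but the bookkeeping in $2 \Leftrightarrow 3$: one must carefully match the abstract hypothesis ``$\O_S \to Rp_*\O_X$ splits'' of Theorem~\ref{th_descentforstacks} with the concrete coalgebra unit $\k \to \k[G]$, verify the vanishing of higher direct images (which needs $G$ affine — and for the general statement with $G$ merely of finite type one may need $G$ to be affine to even speak of $\k[G]$, consistent with the proposition only claiming $4$ under that extra hypothesis), and confirm that passing between $\D^b_{\coh}$ of the quotient stack and $\D^b(\repr(G))$ does not lose the splitting; the representation-theoretic input $1 \Leftrightarrow 4$ is standard and can be quoted.
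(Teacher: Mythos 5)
Your implications $1\Rightarrow 2$, $2\Rightarrow 3$ and $1\Leftrightarrow 4$ are in order, and your $2\Rightarrow 1$ is a correct variant of the paper's argument: you deduce that every $V$ is a direct summand of the injective object $p_*p^*V\cong V\otimes\k[G]$ and conclude semisimplicity, whereas the paper constructs a retraction of an arbitrary embedding $U\hookrightarrow V$ directly from the square relating $\eta_U$, $\eta_V$ and $p_*p^*f$. Either route works.

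The genuine gap is in getting back from 3. You close the cycle by asserting that Theorem~\ref{th_descentforstacks} makes the splitting of $\O_S\to Rp_*\O_X$ \emph{equivalent} to the comparison functor on bounded coherent categories being an equivalence, ``since everything in sight is Noetherian and finite-dimensional''. But that theorem is an if-and-only-if only for the unbounded categories $\D(S)\to\D(X)/p$; for $\D^b_{\coh}$ it gives only the one implication (splitting $\Rightarrow$ equivalence), and the remark immediately following the theorem, together with Example~\ref{example_a1}, shows that the converse fails in general. Since condition 3 is a statement about $\D^b(\repr(G))\to\D^b(\k\mmod)/p$, your argument does not return from 3 to 2, and the three conditions are not yet proved equivalent. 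The paper closes the loop with a separate direct argument $3\Rightarrow 1$: full faithfulness of $\Phi$ gives $\Ext^1_{\repr(G)}(V,V')=\Hom_{\D^b(\repr(G))}(V,V'[1])\subset\Hom_{\D^b(\k\mmod)}(V,V'[1])=0$, hence $\repr(G)$ is semisimple. You need this (or an equivalent substitute) to complete the proof.
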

\begin{proof}
$1\Rightarrow 2$: by definition.

$2\Rightarrow 3$: follows from Theorem~\ref{th_descentforstacks} applied to
the morphism of stacks $p\colon X\ra S$ where
$X=\Spec\k$, $S=(\Spec\k)\quot G$, and $p$ is the canonical morphism.
Since the category $\qcoh(S)=\Repr(G)$ is semisimple, the sheaf $p_*\O_X=H^0(Rp_*\O_X)=\k[G]$ is a direct summand in the complex $Rp_*\O_X$.
And homomorphism $\O_S\ra p_*\O_X$ is the natural embedding of representations
$\k\ra\k[G]$, which splits by condition 2.

$2\Rightarrow 1$: Let $f\colon U\ra V$ be arbitrary embedding of representations of $G$. Let us prove that $f$ has a left inverse mapping. Consider the commutative diagram $$ \xymatrix{U \ar[r]^f \ar[d]^{\eta_U} & V\ar[d]^{\eta_V}\\
p_*p^*U \ar[r]^{p_*p^*f} & p_*p^*V}
$$
of quasi-coherent sheaves on $S$ (i.e., of representations of $G$). By a projection formula, the morphism $\eta_U$ has the form $U\otimes(\O_S\ra p_*\O_X)$ and is split by assumption.
The morphism $p^*f$ is an embedding of vector spaces. Since it is split, the morphism $p_*p^*f$ is also split. Consequently, the composition $\eta_V\circ f$ splits, and so does $f$.

$3\Rightarrow 1$: We need to prove that $\Ext^1_{\repr(G)}(V,V')=0$
for any $V,V'\in \repr(G)$. By our assumption on the comparison functor, we have
\begin{multline*}
\Ext^1_{\repr(G)}(V,V')=\Hom_{\D^b(\repr(G))}(V,V'[1])=\\
=\Hom_{\D^b(\k\mmod)/p}(\Phi(V),\Phi(V'[1]))\subset
\Hom_{\D^b(\k\mmod)}(V,V'[1])=0.
\end{multline*}

$1\Leftrightarrow 4$: this characterization is a result by M.\,Nagata~\cite{Nagata}.
\end{proof}

Introduce the following notation:
\begin{align*}
\D(X)^G&=\Kern([\D(X),\D(G\times X),\ldots,Lp_{\bul}^*]),\\
\D^{\perf}(X)^G&=\Kern([\D^{\perf}(X),\D^{\perf}(G\times X),\ldots,Lp_{\bul}^*]),\\
\D^b_{\coh}(X)^G&=\Kern([\D^b_{\coh}(X),\D^b_{\coh}(G\times X),\ldots,Lp_{\bul}^*]).\\
\end{align*}

\begin{theorem}
\label{th_descentforequiv} Let $G$ be a group scheme of finite type over a field~$\k$,
acting on a scheme $X$ of finite type over $\k$. Suppose $G$ is linearly reductive. Then the following categories are equivalent:
$$\D^G(X)\cong \D(X)^G,\qquad \D^{\perf,G}(X)\cong \D^{\perf}(X)^{G},\qquad
\D^{b,G}_{\coh}(X)\cong \D^b_{\coh}(X)^G.$$
\end{theorem}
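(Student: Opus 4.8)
The plan is to deduce all three equivalences from Theorem~\ref{th_descentforstacks}, applied to the canonical morphism $p\colon X\ra S$ where $S=X\quot G$ is the quotient stack. As recalled in this section, the cosimplicial scheme $(X/G)_\bul$ is precisely the simplicial scheme of fibred products $[S,X,X\times_S X,X\times_S X\times_S X,\ldots]$ attached to $p$; hence the descent categories $\D(X)^G$, $\D^{\perf}(X)^G$, $\D^b_{\coh}(X)^G$ are exactly $\D(X)/p$, $\D^{\perf}(X)/p$, $\D^b_{\coh}(X)/p$, while $\D^G(X)$, $\D^{\perf,G}(X)$, $\D^{b,G}_{\coh}(X)$ are the corresponding derived categories of sheaves on $S$. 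The morphism $p$ is flat and representable (its base change along the atlas $X\ra S$ is the projection $G\times X\ra X$), the scheme $X$ and the stack $S$ satisfy Assumption~$(\dagger)$, and both are Noetherian since $X$ is of finite type over~$\k$. Thus Theorem~\ref{th_descentforstacks} applies, and all three claimed equivalences follow at once once we check that $p$ is SCDT, i.e. that $\O_S\ra Rp_*\O_X$ is an embedding of a direct summand in $\D(S)$.

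To prove that $p$ is SCDT I would exploit the Cartesian square
$$\xymatrix{
X \ar[r]^-{p} \ar[d] & S=X\quot G \ar[d]\\
\Spec\k \ar[r]^-{p_0} & BG:=(\Spec\k)\quot G,
}$$
in which $p_0$ is the canonical presentation of the classifying stack and the right vertical arrow is induced by the $G$-equivariant structure morphism $X\ra\Spec\k$ (with trivial $G$-action on the target). So $p$ is obtained from $p_0$ by the base change $S\ra BG$, and by Proposition~\ref{predl_SCDTproperties}.2 it suffices to show that $p_0$ is SCDT.

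For $p_0$ I would repeat the argument from the proof of Proposition~\ref{prop_linred}. Since $G$ is linearly reductive, $\qcoh(BG)=\Repr(G)$ is semisimple, so every object of $\D(BG)$ is isomorphic to the direct sum of shifts of its cohomology sheaves; in particular $Rp_{0*}\O_{\Spec\k}$ splits off its zeroth cohomology $H^0(Rp_{0*}\O_{\Spec\k})=p_{0*}\O_{\Spec\k}=\k[G]$. The natural map $\O_{BG}\ra Rp_{0*}\O_{\Spec\k}$ factors through this summand, where it becomes the canonical embedding of representations $\k\ra\k[G]$, which is an embedding of a direct summand by condition~2 of Proposition~\ref{prop_linred}. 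Composing the two splittings exhibits $\O_{BG}\ra Rp_{0*}\O_{\Spec\k}$ as an embedding of a direct summand, so $p_0$ — and hence $p$ — is SCDT, and Theorem~\ref{th_descentforstacks} yields the three equivalences.

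I expect no serious obstacle: the only delicate points are the bookkeeping ones — verifying that the displayed square is genuinely Cartesian with $p$ as the base change of $p_0$, so that Proposition~\ref{predl_SCDTproperties}.2 applies, and correctly invoking the identification $H^0(Rp_{0*}\O_{\Spec\k})=\k[G]$ together with the formality of $Rp_{0*}\O_{\Spec\k}$ over the semisimple category $\Repr(G)$. All the genuine content is already contained in Theorem~\ref{th_descentforstacks} and in the characterization of linear reductivity provided by Proposition~\ref{prop_linred}.
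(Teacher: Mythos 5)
Your proof is correct and follows essentially the same route as the paper: apply Theorem~\ref{th_descentforstacks} to $p\colon X\ra X\quot G$, reduce the SCDT property via the Cartesian square over $(\Spec\k)\quot G$ and Proposition~\ref{predl_SCDTproperties}.2, and use linear reductivity to split $\k\ra\k[G]$. Your extra step of first splitting off $H^0(Rp_{0*}\O_{\Spec\k})$ using semisimplicity of $\Repr(G)$ is the same care the paper takes inside the proof of Proposition~\ref{prop_linred}.
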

\begin{proof}
All statements follow from Theorem~\ref{th_descentforstacks} applied to the canonical morphism of stacks $X\xra{p} X\quot G$. To verify that
$p$ is SCDT, consider the Cartesian square of stacks:
$$\xymatrix{
X \ar[d]^p \ar[r] & pt \ar[d]^t\\
X\quot G \ar[r] & pt\quot G.}$$
The morphism
$\O_{pt\quot G}\ra Rt_*t^*\O_{pt\quot G}$ is a canonical morphism $\k\ra\k[G]$
of representations of the group $G$. Since $G$ is linearly reductive, this morphism  splits, and so $t$ is SCDT. By Proposition~\ref{predl_SCDTproperties}.2,
the morphism $p$ is also SCDT.
\end{proof}

\begin{corollary}
Categories from the above theorem are equivalent if $G$ is a reductive group
over an algebraically closed field of zero characteristic or if $G$ is a finite group and the characteristic of $\k$ does not divide the order of $G$.
\end{corollary}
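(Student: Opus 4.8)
The plan is to derive the corollary from Theorem~\ref{th_descentforequiv}: as soon as the group scheme $G$ is known to be linearly reductive, all three equivalences of derived categories follow immediately. So the task reduces to checking, in each of the two cases, that the category $\repr(G)$ of finite-dimensional rational representations of $G$ is semisimple, which is exactly the definition of linear reductivity (cf.\ Proposition~\ref{prop_linred}).

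First consider a finite group $G$ with $\har\k\nmid|G|$. Here I would simply invoke Maschke's theorem: since $|G|$ is invertible in $\k$, the averaging operator $\tfrac1{|G|}\sum_{g\in G}g$ is a well-defined $G$-equivariant projection that splits any inclusion of rational representations, so $\repr(G)$ is semisimple and $G$ is linearly reductive by definition. (When $\har\k=l>0$ one may instead verify condition~4 of Proposition~\ref{prop_linred}: such a $G$ is \'etale, hence smooth and affine over $\k$, its identity component is the trivial torus $(\G_m)^0$, and $G/G_0=G$ has order coprime to $l$.)

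Next consider a reductive group $G$ over an algebraically closed field $\k$ of characteristic zero. A reductive group is smooth and affine, so Proposition~\ref{prop_linred} applies, and in characteristic zero its condition~4 is precisely the hypothesis that $G$ be reductive; hence $G$ is linearly reductive. (This is the classical complete reducibility of rational representations of a reductive group in characteristic zero --- in this generality the theorem of Nagata~\cite{Nagata}, and over $\C$ Weyl's unitarian trick.)

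In both cases $G$ is linearly reductive, and Theorem~\ref{th_descentforequiv} then yields $\D^G(X)\cong\D(X)^G$, $\D^{\perf,G}(X)\cong\D^{\perf}(X)^G$ and $\D^{b,G}_{\coh}(X)\cong\D^b_{\coh}(X)^G$. There is no real obstacle at this stage: all the substance is already contained in Theorem~\ref{th_descentforstacks} --- applied, as in the proof of Theorem~\ref{th_descentforequiv}, to the canonical SCDT morphism $X\to X\quot G$ --- and in the characterization of linear reductivity in Proposition~\ref{prop_linred}. The only minor care needed is to match the standing smoothness/affineness hypotheses in part~4 of Proposition~\ref{prop_linred}, which in the finite-group case is most cleanly sidestepped by appealing directly to Maschke's theorem.
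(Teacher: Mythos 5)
Your proof is correct and follows the paper's own route: the paper likewise reduces the corollary to checking that $G$ is linearly reductive in both cases, citing Proposition~\ref{prop_linred}, and then invokes Theorem~\ref{th_descentforequiv}. Your additional details (Maschke's theorem, the characteristic-zero case of condition~4) simply fill in what the paper leaves implicit.
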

\begin{proof}
Under both assumptions the group $G$ is linearly reductive, see
Proposition~\ref{prop_linred}.
\end{proof}

\begin{remark} Suppose that the group $G$ is finite. By definition,
an object of the descent category $\D(X)^G$ is an object $F\in\D(X)$, equipped with isomorphisms
$\theta_g\colon F\xra{\sim} g^*F$ for any $g\in G$ such that $g^*\theta_h\circ\theta_g=\theta_{hg}$. In other words, an object of $D(X)^G$ is an object of $\D(X)$ with an action of the group $G$ on it.
\end{remark}

If the group $G$ is not linearly reductive, the statement of Theorem~\ref{th_descentforequiv} may be false.
\begin{example}
\label{example_gl/p2}
Let  $V$ be a vector space, take $X=GL(V)$ and let $G=P$ be a parabolic subgroup
in $GL(V)$, then $G$ acts on $X$ by shifts. This action is free, so there is an equivalence $\D^{b,G}_{\coh}(X)\cong\D^b(\coh^G(X))\cong \D^b(\coh(S))\cong \D^b_{\coh}(S)$
where the quotient $S=GL(V)/P$ is a homogeneous space. We are in the situation
of Example~\ref{example_gl/p}, therefore the comparison functor is not an equivalence.
\end{example}

\end{document}